\documentclass[11pt]{article}

\usepackage[margin=1in]{geometry}
\usepackage[T1]{fontenc}
\usepackage[utf8]{inputenc}
\usepackage{lmodern}
\usepackage{microtype}
\usepackage{amsmath,amssymb,amsthm,mathtools}
\usepackage{booktabs,array}
\usepackage{enumitem}
\usepackage{xcolor}
\usepackage[hidelinks]{hyperref}
\usepackage{doi}
\usepackage{xurl}
\usepackage[backend=biber,citestyle=numeric-comp,bibstyle=ieee,sorting=none,minbibnames=5,maxbibnames=8,giveninits=true]{biblatex}
\AtBeginBibliography{\footnotesize\setlength{\bibitemsep}{0.25\baselineskip}}
\DeclareFieldFormat{doi}{\href{https://doi.org/#1}{\nolinkurl{doi:#1}}}
\hypersetup{
  pdftitle={Reciprocal Dynamics in Polynomial Congruence Jumping: Integral Orbits and Valuation Cones},
  pdfauthor={Max A. Alekseyev and Dmitry I. Khomovsky},
  pdfsubject={Reciprocal dynamics in congruence jumping, integral orbits, p-adic valuation cones, C-finite point dynamics, hyperbolic growth obstructions, and Vieta dynamics},
  pdfkeywords={discrete dynamical systems, reciprocal polynomial maps, arithmetic dynamics, p-adic valuations, invariant cones, C-finite sequences, hyperbolic growth, Vieta dynamics}
}
\newtheorem{theorem}{Theorem}[section]
\newtheorem{proposition}[theorem]{Proposition}
\newtheorem{lemma}[theorem]{Lemma}
\newtheorem{corollary}[theorem]{Corollary}

\newtheorem{example}[theorem]{Example}
\theoremstyle{definition}
\newtheorem{definition}[theorem]{Definition}
\theoremstyle{remark}
\newtheorem{remark}[theorem]{Remark}

\newcommand{\defeq}{:=}
\newcommand{\Z}{\mathbb Z}
\newcommand{\Q}{\mathbb Q}

\newcommand{\Rcal}{\mathcal R}
\newcommand{\Mcal}{\mathcal M}

\DeclareMathOperator{\cont}{cont}
\title{Reciprocal Dynamics in Polynomial Congruence Jumping:\\Integral Orbits and Valuation Cones}
\author{Max A. Alekseyev\thanks{The George Washington University, Washington, DC, USA. Corresponding author. Email: \href{mailto:maxal@gwu.edu}{maxal@gwu.edu}. ORCID: \href{https://orcid.org/0000-0002-5140-8095}{0000-0002-5140-8095}}
\and
Dmitry I. Khomovsky\thanks{Independent researcher. Email: \href{mailto:dmkhomovsky@gmail.com}{dmkhomovsky@gmail.com}. ORCID: \href{https://orcid.org/0009-0000-8478-6220}{0009-0000-8478-6220}}}
\date{}
\begin{document}
\maketitle

\begin{abstract}
Alternating reciprocal transformations of polynomial divisibility systems couple an orbit of integer points to a rational dynamical system on polynomial coefficients. For fixed positive degrees $n,m$, we prove universal four-step return to rational gauge classes. Weighted primitive representatives preserve integral continuation through nondegenerate jumps using only exact division and coefficient gcds; the stronger requirement of monic integral representatives is governed by endpoint $p$-adic valuation defects.

After one integral block, constant-term valuations evolve by an explicit unimodular matrix whose trace depends only on $nm$. This gives finite-order behavior for $nm<4$, parabolic dilations for $nm=4$, and hyperbolic dynamics for $nm>4$. In the hyperbolic case, invariant thresholds, irrational eigencones, and discrete valley formulas give finite criteria for forward and bi-infinite integral coefficient orbits, although failure can be delayed arbitrarily long.

Every positive-constant canonical monic coefficient chain admits an explicit positive rational four-phase gauge. At $nm=4$, geometric gauges reduce nondegenerate compatible rational point orbits to two-periodic affine recurrences, hence to sequences satisfying linear recurrences with constant coefficients. Polynomial identities of degree at most $h$ on each invariant conic admit certificates from $2h+1$ initial phase states, while integral unimodular recurrence models yield periodic modular obstructions. For $nm>4$, eventually positive normalized point orbits tending to infinity have hyperbolic logarithmic growth and cannot satisfy such linear recurrences. Finally, separated auxiliary-relation transport reduces rational rank to at most one after two alternating jumps; quadratic specialization gives exact local saturation criteria and persistence results across periodic, parabolic, and hyperbolic orbits.
\end{abstract}

\noindent\textit{2020 Mathematics Subject Classification.} Primary 37P05; Secondary 39A06, 39A23, 11C08.

\noindent\textit{Keywords.} Reciprocal polynomial transformations; congruence jumping; integral orbits; $p$-adic valuations; invariant cones; linear recurrences.

\section{Introduction}

The starting point is the familiar idea of \emph{jumping}: replace one coordinate of an integer solution by another integer obtained from the same equation or divisibility relation, and iterate.  In classical Vieta jumping the equation itself is fixed.  Our motivating setting is the polynomial divisibility system
\begin{equation}\label{eq:intro-divisibility-system}
  y\mid A(x),\qquad x\mid B(y),
\end{equation}
with $A,B\in\mathbb Z[T]$.  If
\[
  y'\defeq \frac{A(x)}{y},
\]
then $y'$ is again integral.  The second divisibility relation, however, need not retain the same polynomial $B$: after substituting the quotient move, its coefficients transform by a reciprocal-polynomial operation.  Iteration therefore produces two coupled evolutions, an arithmetic orbit of integer points and a rational orbit of polynomial coefficient states.

This mechanism was introduced and developed from the arithmetic point of view in our companion paper on congruence jumping~\cite{AlekseyevKhomovsky2026Arithmetic}.  That paper studies reciprocal transport of the divisibility conditions, primitive normalization, pointwise lifting, and the resulting solution sets.  The present paper separates the second evolution and studies the polynomial coefficients as a dynamical system in their own right.  For fixed positive degrees $n$ and $m$, a polynomial pair is a point of a finite-dimensional coefficient space, and alternating normalized reciprocal transformations act rationally on the open set where the relevant constant terms are nonzero.  This viewpoint exposes qualitative structure that is largely independent of the original Diophantine problem.

Iteration of rational maps under arithmetic restrictions is a central theme of arithmetic dynamics~\cite{Silverman2007}, while the linear recurrences that appear after valuation reduction belong to the classical theory of difference equations~\cite{Elaydi2005}.  The present system has an unusual combination of the two structures.  At the coefficient level, four alternating reciprocal updates close up to rational gauge: every state returns to its original rational gauge classes.  At the arithmetic level, requiring all intermediate coefficients to remain integral produces a nontrivial local dynamical constraint.  The constant-term valuations then form a two-dimensional linear factor of the coefficient dynamics, with invariant cones and finite valley tests.

Classical fixed-state quotient dynamics already appears in the quadratic chains of Mills~\cite{Mills1953,Mills1954}.  Birational recurrences with unexpectedly persistent integrality also arise in the Laurent-phenomenon and cluster settings: the Laurent phenomenon produces Laurent iterates under broad mutation rules~\cite{FominZelevinsky2002}, mutation-periodic quivers generate finite-dimensional birational cluster maps~\cite{FordyHone2014}, and Vieta-jumping transformations for Markov-type Diophantine equations can be realized by cluster and generalized-cluster mutations~\cite{Lampe2016,GyodaMatsushita2023}.  A related reciprocal-substitution pattern occurs in rank-two Laurent phenomenon algebras~\cite{LamPylyavskyy2016}.  Our emphasis is different: the polynomial coefficients themselves form the state variables, while integrality is governed by the arithmetic of reciprocal normalization rather than by a Laurent property.

The classification has three layers.  First, endpoint valuation defects provide an exact local criterion for one monic reciprocal normalization and hence for one complete four-step block.  Second, after the first block both states are monic, and the constant-term valuations evolve by an explicit matrix $M_{n,m}\in\operatorname{SL}_2(\mathbb Z)$ depending only on the degree product $s=nm$.  For $s=1,3$ this matrix is elliptic of finite order, for $s=2$ it is the central matrix $-I$, for $s=4$ it is parabolic, and for $s>4$ it is hyperbolic.  Thus $s<4$ is a finite-order, elliptic-type regime, whereas $s=4$ and $s>4$ exhibit genuinely parabolic and hyperbolic behavior.  At the parabolic boundary, the coefficient dilation also induces a natural geometric gauge on compatible point orbits; for all three degree patterns with $nm=4$ the normalized point dynamics becomes C-finite.  For every positive-constant canonical monic chain we construct a positive rational four-phase coefficient gauge. For $nm>4$, every eventually positive normalized orbit that tends to infinity then has superexponential point growth and cannot be C-finite.  Third, in the hyperbolic regime the full coefficient-integrality problem reduces to invariant endpoint thresholds and minima of four second-order recurrence subsequences, giving finite forward and two-sided decision procedures despite the existence of arbitrarily delayed obstructions.

Coefficient integrality and pointwise liftability are logically distinct.  A coefficient state may be integral even when no prescribed integer point realizes the next primitive normalization, while pointwise identities can validate cancellations not visible from coefficient content alone.  Pointwise lifting and the arithmetic structure of the solution sets are the principal concerns of the companion paper~\cite{AlekseyevKhomovsky2026Arithmetic}; Sections~\ref{sec:four-step-block}--\ref{sec:iterated-gauge} here classify the intrinsic coefficient system. Section~\ref{sec:point-dynamics} studies compatible point recurrences, and Section~\ref{sec:quadratic-coeff} returns to primitive quadratic coefficient states.  Section~\ref{sec:persistent-saturation} then returns to the pointwise interface.  It first shows that simultaneous separated auxiliary relations have a strict rational-rank bottleneck: transport factors through a side projection, and two alternating jumps leave at most one rational relation direction even after integral saturation.  We then specialize to quadratic Vieta dynamics, where strengthened auxiliary relations can persist indefinitely on a periodic orbit and for arbitrarily long finite windows on parabolic and hyperbolic orbits.  The longer constructions are deferred to Appendix~\ref{app:persistence-proofs}.

The organization follows this decomposition. Section~\ref{sec:four-step-block} proves rational four-step closure and gives a direct weighted integral realization. Section~\ref{sec:one-block} gives the local monic-integrality criterion, and Section~\ref{sec:iterated-gauge} classifies complete coefficient orbits by degree product. Section~\ref{sec:point-dynamics} constructs the universal positive four-phase gauge, proves the parabolic affine realization, derives finite conic-identity certificates and periodic modular sieves, and establishes the complementary hyperbolic growth obstruction. Section~\ref{sec:quadratic-coeff} determines primitive periodic quadratic states. Section~\ref{sec:persistent-saturation} treats relation transport, rank collapse, and saturation along Vieta orbits, with the longer constructions in Appendix~\ref{app:persistence-proofs}.

\section{Finite-dimensional reciprocal dynamics and four-step closure}\label{sec:four-step-block}

For fixed $n,m\ge1$, let $\mathcal P_{n,m}$ denote the space of ordered polynomial pairs $(A,B)$ with $\deg A=n$, $\deg B=m$, and nonzero constant terms.  In coefficient coordinates, $\mathcal P_{n,m}$ is a Zariski-open subset of an affine space of dimension $n+m+2$.  Each normalized reciprocal update below is rational in these coordinates.  Four alternating updates return to the same ordered degree pattern and therefore define a rational return map on $\mathcal P_{n,m}$; after the first block, the dynamics restricts to the monic subspace.  The integral coefficient states form the distinguished arithmetic subset whose invariant orbits we classify.

For a degree-$d$ polynomial $P$ and a nonzero parameter $c$, write
\begin{equation}\label{eq:c-reciprocal}
  \Rcal_cP(T)\defeq T^dP(c/T).
\end{equation}
When $P(0)\ne0$, monic reciprocal normalization is obtained by dividing \eqref{eq:c-reciprocal} by $P(0)$.

\begin{proposition}[universal four-step rational gauge closure]\label{prop:four-step-gauge-closure}
Let $A,B\in\Z[T]$ have positive degrees $n,m$, respectively, and nonzero
constant terms
\[
  a\defeq A(0),\qquad b\defeq B(0).
\]
For coefficient dynamics, extend the definition of $\Rcal_cP$ from
\eqref{eq:c-reciprocal} to $c\in\mathbb Q^{\times}$ and
$P\in\mathbb Q[T]$ by the same formula.  Define successively
\begin{align}
  B_1&\defeq\frac1b\Rcal_aB,
  &q&\defeq B_1(0),\label{eq:gauge-four-B1}\\
  A_1&\defeq\frac1a\Rcal_qA,
  &r&\defeq A_1(0),\label{eq:gauge-four-A1}\\
  B_2&\defeq\frac1q\Rcal_rB_1,
  &t&\defeq B_2(0),\label{eq:gauge-four-B2}\\
  A_2&\defeq\frac1r\Rcal_tA_1,
  &u&\defeq A_2(0).\label{eq:gauge-four-A2}
\end{align}
Then $B_1,A_1,B_2,A_2\in\mathbb Q[T]$ are monic.  Writing
$a_n\defeq\operatorname{lc}(A)$ and $b_m\defeq\operatorname{lc}(B)$,
their successive constant terms are
\begin{equation}\label{eq:gauge-four-constants}
  q=\frac{b_ma^m}{b},\qquad
  r=\frac{a_nq^n}{a},\qquad
  t=\frac{r^m}{q},\qquad
  u=\frac{t^n}{r}.
\end{equation}
Moreover,
\begin{equation}\label{eq:gauge-four-return}
  B_2(T)=\frac{t}{b}\,B\!\left(\frac{a}{r}T\right),
  \qquad
  A_2(T)=\frac{u}{a}\,A\!\left(\frac{q}{t}T\right).
\end{equation}
Thus every coefficient pair with nonzero constant terms returns after four
monic rational reciprocal normalizations to its original pair of rational
gauge classes.
\end{proposition}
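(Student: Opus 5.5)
The plan is a direct computation that tracks leading and constant coefficients through each reciprocal step. It rests on one elementary observation about $\Rcal_c$. If $P(T)=\sum_{k=0}^d p_kT^k$ has degree $d$ and $p_0\ne0$, then
\[
  \Rcal_cP(T)=\sum_{k=0}^d p_k c^k T^{d-k}.
\]
So $\Rcal_cP$ has leading coefficient $p_0$, which gives exactly degree $d$ because $p_0\ne0$. Its constant term is $p_dc^d$, which is nonzero since $c\ne0$. Dividing by $p_0=P(0)$ therefore yields a monic polynomial of the same degree with constant term $\operatorname{lc}(P)c^d/P(0)\neq 0$.

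First I apply this observation four times in sequence. For $B_1=\frac1b\Rcal_aB$ it gives monicity and $q=b_ma^m/b\ne0$. For $A_1=\frac1a\Rcal_qA$ it gives monicity and $r=a_nq^n/a$. For $B_2=\frac1q\Rcal_rB_1$ the leading coefficient of $B_1$ is $1$, so $t=r^m/q$. Similarly $u=t^n/r$. This establishes \eqref{eq:gauge-four-constants}. Nonvanishing at each stage guarantees that every later step is defined, since each normalization divides by a nonzero constant term and each reciprocal uses a nonzero parameter.

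Second, I prove the return formulas using the composition identity
\[
  \Rcal_{c'}\Rcal_cP(T)=T^d\,(\Rcal_cP)(c'/T)=T^d(c'/T)^dP(cT/c')=c'^dP\!\left(\frac{c}{c'}T\right).
\]
Hence
\[
  B_2=\frac1q\cdot\frac1b\,\Rcal_r\Rcal_aB=\frac{r^m}{qb}B\!\left(\frac arT\right)=\frac tbB\!\left(\frac arT\right).
\]
In the same way,
\[
  A_2=\frac1r\Rcal_t\!\left(\frac1a\Rcal_qA\right)=\frac{t^n}{ra}A\!\left(\frac qtT\right)=\frac uaA\!\left(\frac qtT\right).
\]
Linearity of $\Rcal_c$ in $P$ lets the scalar factors pull out. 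Both $B_2$ and $A_2$ are therefore obtained from the originals by a nonzero rational scaling of the variable together with a nonzero scalar multiple. This is the stated return to rational gauge classes.

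No step is a real obstacle. The only point needing care is the bookkeeping that degrees are preserved, namely that the leading coefficient of each reciprocal equals the previous constant term and is nonzero. This is what makes the composition identity apply with the correct $d$ ($m$ for the $B$-steps, $n$ for the $A$-steps), and I will state it explicitly before the composition step.
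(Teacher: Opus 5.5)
Your proposal is correct and follows the paper's proof: the paper also reads off monicity and the constants \eqref{eq:gauge-four-constants} from the fact that $\operatorname{lc}(\Rcal_cP)=P(0)$, then applies the double-reciprocal identity $\Rcal_w(\Rcal_vP)(T)=w^dP\bigl(\tfrac vwT\bigr)$ twice to obtain \eqref{eq:gauge-four-return}. Your explicit check that each reciprocal keeps degree $d$ and has nonzero constant term $\operatorname{lc}(P)c^d$ is what makes that identity apply, and the paper leaves it implicit.
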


\begin{proof}
The leading coefficient of $\Rcal_cP$ is $P(0)$.  Hence each division in
\eqref{eq:gauge-four-B1}--\eqref{eq:gauge-four-A2} makes the resulting
polynomial monic, and taking constant terms gives
\eqref{eq:gauge-four-constants}.

For any degree-$d$ polynomial $P\in\mathbb Q[T]$ with $P(0)\ne0$ and nonzero
$v,w\in\mathbb Q$,
\begin{equation}\label{eq:double-reciprocal-gauge}
  \Rcal_w(\Rcal_vP)(T)=w^dP\!\left(\frac vwT\right).
\end{equation}
Using \eqref{eq:double-reciprocal-gauge} twice gives
\[
  B_2(T)
  =\frac{1}{qb}\Rcal_r(\Rcal_aB)(T)
  =\frac{r^m}{qb}B\!\left(\frac arT\right)
  =\frac tbB\!\left(\frac arT\right),
\]
and similarly
\[
  A_2(T)
  =\frac{1}{ra}\Rcal_t(\Rcal_qA)(T)
  =\frac{t^n}{ra}A\!\left(\frac qtT\right)
  =\frac uaA\!\left(\frac qtT\right).
\]
This proves \eqref{eq:gauge-four-return}.
\end{proof}

\subsection{Weighted integral representatives and homogeneous gauges}
\label{subsec:weighted-two-variable}

The rational return concerns polynomial shapes up to scalar and variable
dilation.  Explicitly, two nonzero polynomials $P,Q\in\Q[T]$ are
\emph{gauge equivalent} if
\begin{equation}\label{eq:weighted-gauge-equivalence}
 Q(T)=uP(vT),\qquad u,v\in\Q^\times.
\end{equation}
Before imposing the monic normalization, we record an integral realization
that retains the scalar content as part of the coefficient state.

A \emph{weighted primitive pair} consists of positive integers
$\lambda,\mu$ and nonzero primitive polynomials $A,B\in\Z[T]$.  A
nonzero integer point $(x,y)$ realizes this pair when
\begin{equation}\label{eq:weighted-two-variable-state}
 y\mid\lambda A(x),\qquad x\mid\mu B(y).
\end{equation}
An overall sign is carried by the primitive polynomial, so positive
weights impose no restriction.  For raw transport we also use
$\Rcal_0B(T)\defeq B(0)T^{\deg B}$, the coefficientwise extension of
\eqref{eq:c-reciprocal} to a zero reciprocal parameter.

\begin{proposition}[weighted integral continuation and four-step gauge return]
\label{prop:weighted-two-variable-gauge}
Let $(x,y)$ realize \eqref{eq:weighted-two-variable-state}, and suppose
the canonical quotient
\[
 y'\defeq\frac{\lambda A(x)}y
\]
is nonzero.  Put $c\defeq\lambda A(0)$, $m\defeq\deg B$, and define
\begin{equation}\label{eq:weighted-two-variable-raw}
 \widetilde B(T)\defeq\mu\Rcal_cB(T).
\end{equation}
Then $x\mid\widetilde B(y')$ and $y'\mid\lambda A(x)$.  If
$\widetilde B\ne0$, the assignments
\[
 g\defeq\cont(\widetilde B)>0,\qquad
 B'\defeq\widetilde B/g,\qquad \mu'\defeq g
\]
therefore give a new weighted primitive pair realized by $(x,y')$.
The symmetric assertion holds for an $x$-jump.  Alternating these
updates requires only exact quotient operations and coefficient gcds,
and can continue for as long as the new coordinates and raw transported
polynomials are nonzero.

If $A,B$ have positive degrees and $A(0)B(0)\ne0$, every raw
reciprocal transform preserves its source degree and has nonzero
constant term.  Consequently, whenever four alternating quotient jumps
have nonzero coordinates, each resulting primitive polynomial is gauge
equivalent to its initial polynomial.
\end{proposition}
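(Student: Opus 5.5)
The plan is to verify the divisibility claims directly from the identity $\Rcal_cB(y')=y'^{m}B(c/y')$ evaluated modulo $x$. Since $yy'=\lambda A(x)\equiv \lambda A(0)=c \pmod x$, for each coefficient $b_j$ of $B$ the term of $\widetilde B(y')$ is $\mu b_j c^{j}y'^{m-j}$. Multiplying $B(y)=\sum b_jy^j$ by $y'^m$ gives $y'^mB(y)=\sum b_j (yy')^j y'^{m-j}\equiv\sum b_jc^jy'^{m-j}=\Rcal_cB(y')\pmod x$. Hence $\widetilde B(y')\equiv y'^m\mu B(y)\equiv 0\pmod x$ because $x\mid\mu B(y)$. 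The relation $y'\mid\lambda A(x)$ is immediate from $\lambda A(x)=yy'$. When $\widetilde B\ne0$, dividing by its content $g$ gives a primitive $B'$ with $\mu'B'=\widetilde B$, so $x\mid \mu'B'(y')$ and $(x,y')$ realizes $(\lambda,\mu',A,B')$. The $x$-jump is symmetric: interchange the roles, use $x'=\mu B(y)/x$, $xx'\equiv\mu B(0)\pmod y$, and transport $\lambda\Rcal_{\mu B(0)}A$. The only operations involved are the exact division defining $y'$, the content gcd, and the exact division by $g$, which gives the algorithmic remark.

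For the degree statement, I would note that if $\deg B=m\ge1$ and $B(0)\ne0$, $c\ne0$, then $\Rcal_cB$ has leading coefficient $B(0)\ne0$ and constant term $b_mc^m\ne0$. So the transform has degree $m$ and nonzero constant term, and dividing by the content keeps both properties. The parameter $c=\lambda A(0)$ is nonzero whenever $A(0)\ne0$. Inductively, all primitive polynomials along the alternating chain keep their degrees and nonzero constant terms, and every reciprocal parameter is nonzero. The zero-parameter case $\Rcal_0$ therefore never occurs.

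For gauge return, I would use the double-reciprocal identity \eqref{eq:double-reciprocal-gauge}. Each $B$-update has the form $B\mapsto \kappa\,\Rcal_{c}B$ with $\kappa\in\Q^\times$ and $c\in\Q^\times$. Composing two such updates (the first and third jumps act on $B$) gives $\kappa_1\kappa_2\Rcal_{c_2}\Rcal_{c_1}B=\kappa_1\kappa_2c_2^mB((c_1/c_2)T)$, which has the form \eqref{eq:weighted-gauge-equivalence}. The same argument applies to $A$ under the second and fourth jumps. Alternatively, scalar multiples commute with $\Rcal_c$, so the primitive outputs differ from the monic chain of Proposition~\ref{prop:four-step-gauge-closure} only by rational scalars, up to the changed parameters. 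Either way, only nonvanishing of the parameters is needed, and this was established above.

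The main obstacle is bookkeeping rather than depth: the congruence $yy'\equiv c\pmod x$ must be used with $y'$ raised to the power $m$ so that no inverse of $y$ modulo $x$ is needed, since $y$ and $x$ need not be coprime. The identity $y'^mB(y)\equiv\Rcal_cB(y')$ is an exact polynomial manipulation, so this works. I would also state that "nonzero coordinates" is what guarantees each quotient is defined.
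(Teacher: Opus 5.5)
Your proposal is correct and follows the paper's proof essentially step for step: the congruence $yy'\equiv c\pmod x$ gives $\widetilde B(y')\equiv\mu(y')^mB(y)\equiv0\pmod x$ without inverting $y$, the endpoint coefficients $\mu B(0)$ and $\mu b_mc^m$ show that degree and nonzero constant term persist, and \eqref{eq:double-reciprocal-gauge}, applied to the two reciprocal transforms each polynomial undergoes in four jumps, gives the gauge equivalence. Your explicit check that each later reciprocal parameter equals the nonzero constant term of the previous raw transform is a slightly more careful version of the paper's remark that ``the endpoint hypothesis persists.''
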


\begin{proof}
Write $B(T)=\sum_{i=0}^m b_iT^i$.  The raw transform is the integer
polynomial
\[
 \widetilde B(T)=\mu\sum_{i=0}^m b_ic^iT^{m-i}.
\]
Since $yy'=\lambda A(x)\equiv c\pmod x$, one has
\[
 \widetilde B(y')
 \equiv\mu\sum_{i=0}^m b_i(yy')^i(y')^{m-i}
 =\mu(y')^mB(y)
 \equiv0\pmod x.
\]
The other divisibility follows from $yy'=\lambda A(x)$.  Extracting
the content only rewrites the same polynomial as $\widetilde B=\mu'B'$,
so it preserves the relation without cancelling any scalar from a
congruence.  This proves continuation; its operations do not require
integer factorization.

If $c\ne0$ and $b_0b_m\ne0$, the leading and constant coefficients of
$\widetilde B$ are respectively $\mu b_0$ and $\mu b_mc^m$, both
nonzero.  Thus the endpoint hypothesis persists through every update.
In four alternating jumps, each polynomial undergoes two reciprocal
transforms.  The identity \eqref{eq:double-reciprocal-gauge} shows that
their composition is a scalar times a rational variable dilation.
The weights and primitive normalizations add only nonzero scalar
factors, so the resulting polynomial belongs to its initial gauge class.
\end{proof}

This weighted realization and the monic realization answer different
normalization questions.  The weighted state retains the entire raw
integer polynomial through its weight and primitive factor.  The local
criteria in Section~\ref{sec:one-block} determine when the specified
monic representatives themselves have integer coefficients.

The gauge return also has a homogeneous interpretation.  For a
degree-$d$ polynomial with nonzero constant term define
\[
 P^h(U,V)\defeq V^dP(U/V).
\]
For $c\ne0$,
\begin{equation}\label{eq:weighted-homogeneous-action}
 (\Rcal_cP)^h(U,V)=P^h(cV,U).
\end{equation}
One reciprocal update is an anti-diagonal change of homogeneous
coordinates, and two such updates give the diagonal change
$P^h(U,V)\mapsto P^h(vU,wV)$.  Thus four alternating jumps return to
the original orbit under scalar and variable dilations.  In particular,
for a quadratic
\[
 P(T)=aT^2+pT+c,\qquad ac\ne0,
\]
the quantities
\begin{equation}\label{eq:quadratic-gauge-invariants}
 \frac{p^2}{ac},\qquad \frac{p^2-4ac}{ac}
\end{equation}
are gauge invariants: under $P(T)\mapsto uP(vT)$ the three coefficients
become $uav^2,upv,uc$, and both ratios are unchanged.

\section{Integral realization of one four-step block}\label{sec:one-block}

\begin{definition}[endpoint valuation defects]\label{def:endpoint-defects}
Let
\[
  P(T)=\sum_{i=0}^{d}p_iT^i\in\mathbb Q[T],
  \qquad p_0p_d\ne0.
\]
For a prime $p$, define the \emph{left} and \emph{right endpoint defects}
\begin{align}
  \lambda_p(P)
  &\defeq
  \max_{\substack{1\le i\le d\\ p_i\ne0}}
  \left\lceil\frac{v_p(p_0)-v_p(p_i)}{i}\right\rceil,
  \label{eq:left-endpoint-defect}\\
  \rho_p(P)
  &\defeq
  \max_{\substack{0\le i<d\\ p_i\ne0}}
  \left\lceil\frac{v_p(p_d)-v_p(p_i)}{d-i}\right\rceil.
  \label{eq:right-endpoint-defect}
\end{align}
Both maxima are nonempty because $p_d\ne0$ and $p_0\ne0$, respectively.
\end{definition}

The two defects are the endpoint slope constraints of the $p$-adic
coefficient diagram.  They give an exact one-step test for monic
normalization.

\begin{proposition}[one-step monic integrality]\label{prop:one-step-monic-integrality}
For $P$ as in Definition~\ref{def:endpoint-defects} and
$c\in\mathbb Q^{\times}$, put
\begin{equation}\label{eq:monic-reciprocal-normalization}
  \Mcal_cP\defeq\frac{1}{P(0)}\Rcal_cP.
\end{equation}
For every prime $p$, writing $s\defeq v_p(c)$, one has
\begin{equation}\label{eq:one-step-integrality-criterion}
  \Mcal_cP\in\mathbb Z_p[T]
  \quad\Longleftrightarrow\quad
  s\ge\lambda_p(P).
\end{equation}
Moreover,
\begin{equation}\label{eq:endpoint-defect-transform}
  \lambda_p(\Mcal_cP)=s+\rho_p(P),
  \qquad
  \rho_p(\Mcal_cP)=\lambda_p(P)-s.
\end{equation}
\end{proposition}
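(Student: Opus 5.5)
The plan is to compute the coefficients of $\Mcal_cP$ explicitly and read off both claims directly from their valuations. From \eqref{eq:c-reciprocal},
\[
  \Rcal_cP(T)=\sum_{i=0}^d p_ic^iT^{d-i},
\]
so the coefficient of $T^{d-i}$ in $\Mcal_cP$ is
\[
  q_{d-i}\defeq\frac{p_ic^i}{p_0}.
\]
Its valuation is
\[
  v_p(q_{d-i})=is-\bigl(v_p(p_0)-v_p(p_i)\bigr),
\]
where we adopt the convention $v_p(0)=+\infty$. The nonzero coefficients of $\Mcal_cP$ are indexed by the same set $\{i: p_i\ne0\}$. The leading coefficient is $q_d=1$ and the constant term is $q_0=p_dc^d/p_0\ne0$, so $\Mcal_cP$ again satisfies the hypotheses of Definition~\ref{def:endpoint-defects}.

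For \eqref{eq:one-step-integrality-criterion}, note that $\Mcal_cP\in\Z_p[T]$ exactly when $v_p(q_{d-i})\ge0$ for every $1\le i\le d$ with $p_i\ne0$; the index $i=0$ gives the monic leading term and imposes nothing. This condition reads $s\ge (v_p(p_0)-v_p(p_i))/i$. Since $s$ is an integer, it is equivalent to $s\ge\lceil (v_p(p_0)-v_p(p_i))/i\rceil$. Taking the maximum over the admissible $i$ gives exactly $s\ge\lambda_p(P)$.

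For the transform rules \eqref{eq:endpoint-defect-transform}, write $Q=\Mcal_cP$ with $Q_j=q_j$, so that $Q_0=q_0$ (index $i=d$) and $Q_d=1$ (index $i=0$). The only care needed is bookkeeping of indices under the reversal $j=d-i$ together with the integer shift $s$ pulled through the ceiling.

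\begin{itemize}
\item \emph{Left defect of $Q$.} For $j=d-i$ with $1\le j\le d$, that is $0\le i<d$, we compute
\[
  v_p(Q_0)-v_p(Q_j)=\bigl(ds-v_p(p_0)+v_p(p_d)\bigr)-\bigl(is-v_p(p_0)+v_p(p_i)\bigr)=(d-i)s+v_p(p_d)-v_p(p_i).
\]
Dividing by $j=d-i$ gives $s+\frac{v_p(p_d)-v_p(p_i)}{d-i}$. Because $s\in\Z$, the ceiling of this is $s+\lceil\cdot\rceil$. Maximizing over $0\le i<d$ with $p_i\ne0$ yields $\lambda_p(Q)=s+\rho_p(P)$.

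\item \emph{Right defect of $Q$.} For $0\le j<d$, that is $1\le i\le d$, we have $v_p(Q_d)=0$, so
\[
  v_p(Q_d)-v_p(Q_j)=-is+v_p(p_0)-v_p(p_i).
\]
Dividing by $d-j=i$ gives $\frac{v_p(p_0)-v_p(p_i)}{i}-s$. Taking ceilings and then the maximum yields $\rho_p(Q)=\lambda_p(P)-s$.
\end{itemize}

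No real obstacle is expected. The only points to watch are that $s$ is an integer, so it commutes with the ceiling, and that the index set of nonzero coefficients is preserved under the reversal $i\mapsto d-i$.
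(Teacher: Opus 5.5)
Your proposal is correct and follows the paper's own proof: both read off the valuation $v_p(p_i)+is-v_p(p_0)$ of the $T^{d-i}$ coefficient of $\Mcal_cP$, obtain the integrality criterion via the ceiling bound using $s\in\Z$, and get the two transform rules by substituting into the defect definitions under the index reversal $j=d-i$. The only difference is that you write out the right-defect computation explicitly, where the paper says ``the same calculation from the other endpoint.''
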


\begin{proof}
The coefficient of $T^{d-i}$ in \eqref{eq:monic-reciprocal-normalization}
has $p$-adic valuation
\[
  v_p(p_i)+is-v_p(p_0).
\]
These valuations are all nonnegative precisely when
\[
  s\ge
  \left\lceil\frac{v_p(p_0)-v_p(p_i)}{i}\right\rceil
\]
for every nonzero coefficient $p_i$ with $1\le i\le d$, which is
\eqref{eq:one-step-integrality-criterion}.

Write $Q\defeq\Mcal_cP$.  The coefficient of $T^j$ in $Q$ is
$p_{d-j}c^{d-j}/p_0$.  Substitution into
\eqref{eq:left-endpoint-defect} gives
\[
  \lambda_p(Q)
  =s+\max_{0\le i<d,\ p_i\ne0}
  \left\lceil\frac{v_p(p_d)-v_p(p_i)}{d-i}\right\rceil
  =s+\rho_p(P).
\]
The same calculation from the other endpoint gives
$\rho_p(Q)=\lambda_p(P)-s$.
\end{proof}

\begin{proposition}[exact integral realization of the four-step cycle]\label{prop:four-step-integrality}
Retain the notation of Proposition~\ref{prop:four-step-gauge-closure}.  For
a prime $p$, write
\[
  \alpha\defeq v_p(a),\qquad
  \beta\defeq v_p(b),\qquad
  \gamma\defeq v_p(q),\qquad
  \delta\defeq v_p(r),\qquad
  \varepsilon\defeq v_p(t).
\]
Equivalently, with $a_n\defeq\operatorname{lc}(A)$ and
$b_m\defeq\operatorname{lc}(B)$,
\begin{equation}\label{eq:four-step-valuation-recursion}
  \gamma=v_p(b_m)+m\alpha-\beta,
  \qquad
  \delta=v_p(a_n)+n\gamma-\alpha,
  \qquad
  \varepsilon=m\delta-\gamma.
\end{equation}
Then
\[
  B_1,A_1,B_2,A_2\in\mathbb Z[T]
\]
if and only if, for every prime $p$,
\begin{equation}\label{eq:four-step-integrality-criterion}
\begin{aligned}
  \alpha&\ge\lambda_p(B),&
  \gamma&\ge\lambda_p(A),\\
  \delta-\alpha&\ge\rho_p(B),&
  \varepsilon-\gamma&\ge\rho_p(A).
\end{aligned}
\end{equation}
Thus the universal rational four-step gauge cycle has an integral monic
realization exactly when the four endpoint-slope inequalities
\eqref{eq:four-step-integrality-criterion} hold locally at every prime.
\end{proposition}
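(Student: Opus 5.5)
The plan is to apply Proposition~\ref{prop:one-step-monic-integrality} four times, once per normalization. Along the way we track the endpoint defects with the transformation rule \eqref{eq:endpoint-defect-transform}. Integrality over $\Z$ is equivalent to integrality over $\Z_p$ for every prime $p$, and all four polynomials lie in $\Q[T]$ by Proposition~\ref{prop:four-step-gauge-closure}. So it suffices to fix a prime $p$ and show that $B_1,A_1,B_2,A_2\in\Z_p[T]$ exactly when the four inequalities \eqref{eq:four-step-integrality-criterion} hold at $p$. First I will record that each step in \eqref{eq:gauge-four-B1}--\eqref{eq:gauge-four-A2} is literally a monic reciprocal normalization: $B_1=\Mcal_aB$, $A_1=\Mcal_qA$, $B_2=\Mcal_rB_1$, $A_2=\Mcal_tA_1$. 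The valuation formulas \eqref{eq:four-step-valuation-recursion} follow by taking $v_p$ of \eqref{eq:gauge-four-constants}, using $v_p(b_m)$ and $v_p(a_n)$ for the leading coefficients.

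Next comes the chain of one-step tests. By \eqref{eq:one-step-integrality-criterion}, $B_1\in\Z_p[T]$ if and only if $\alpha\ge\lambda_p(B)$, and $A_1\in\Z_p[T]$ if and only if $\gamma\ge\lambda_p(A)$. For $B_2=\Mcal_rB_1$ the test is $\delta\ge\lambda_p(B_1)$. The transformation rule \eqref{eq:endpoint-defect-transform} gives $\lambda_p(B_1)=\alpha+\rho_p(B)$, so the condition becomes $\delta-\alpha\ge\rho_p(B)$. Likewise $A_2=\Mcal_tA_1$ is integral exactly when $\varepsilon\ge\lambda_p(A_1)=\gamma+\rho_p(A)$. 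To apply the proposition we need its hypothesis $p_0p_d\neq0$ for $B_1$ and $A_1$. This holds because $B_1,A_1$ are monic with nonzero constant terms $q,r$, which are nonzero since $a,b,a_n,b_m\ne0$.

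The one subtlety is the logical structure of the equivalence. The four one-step criteria are each unconditional characterizations of that single step's integrality at $p$; none assumes the earlier polynomials are integral, since Proposition~\ref{prop:one-step-monic-integrality} applies to arbitrary $P\in\Q[T]$. Hence the conjunction of the four polynomial memberships is equivalent to the conjunction of the four inequalities, with no ordering issue. I expect the only care needed to be bookkeeping: checking that the transform rule is applied with the correct parameter ($s=\alpha$ for $B_1$, $s=\gamma$ for $A_1$), and that $\rho_p$ and $\lambda_p$ are not swapped. Finally, quantifying over all primes, and noting that only finitely many primes make any quantity nonzero, gives the global statement.
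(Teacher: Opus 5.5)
Your proposal is correct and matches the paper's proof: you identify $B_1=\Mcal_aB$, $A_1=\Mcal_qA$, $B_2=\Mcal_rB_1$, $A_2=\Mcal_tA_1$, apply the one-step criterion to each, use the defect transformation rule to get $\lambda_p(B_1)=\alpha+\rho_p(B)$ and $\lambda_p(A_1)=\gamma+\rho_p(A)$, and conclude from the fact that a rational polynomial lies in $\Z[T]$ exactly when it lies in $\Z_p[T]$ for every prime $p$. You also explicitly check that $B_1,A_1$ satisfy the hypothesis $p_0p_d\ne0$ and that each one-step test holds without assuming integrality of the earlier states; the paper leaves both of these implicit.
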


\begin{proof}
The first two states are
\[
  B_1=\Mcal_aB,
  \qquad
  A_1=\Mcal_qA.
\]
Hence Proposition~\ref{prop:one-step-monic-integrality} gives the first
two inequalities in \eqref{eq:four-step-integrality-criterion}.  For the
third state,
\[
  B_2=\Mcal_rB_1,
\]
and \eqref{eq:endpoint-defect-transform} gives
\[
  \lambda_p(B_1)=\alpha+\rho_p(B).
\]
Thus $B_2\in\mathbb Z_p[T]$ exactly when
$\delta-\alpha\ge\rho_p(B)$.  Similarly,
\[
  \lambda_p(A_1)=\gamma+\rho_p(A),
\]
so $A_2=\Mcal_tA_1$ is $p$-integral exactly when
$\varepsilon-\gamma\ge\rho_p(A)$.  A rational polynomial lies in
$\mathbb Z[T]$ precisely when it lies in $\mathbb Z_p[T]$ for every
prime $p$, which proves the result.
\end{proof}

\begin{corollary}[finite obstruction set for the four-step cycle]\label{cor:four-step-obstruction-primes}
If $nm\ge2$, then every prime $p\nmid A(0)B(0)$ automatically satisfies
\eqref{eq:four-step-integrality-criterion}.  Consequently, for every pair
of positive degrees other than the linear--linear case, integral monic
realization of the universal four-step cycle is decided by the finitely
many primes dividing $A(0)B(0)$.
\end{corollary}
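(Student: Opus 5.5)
The plan is to verify the four inequalities of \eqref{eq:four-step-integrality-criterion} directly at a prime $p\nmid A(0)B(0)$, using only that $A,B\in\Z[T]$, so that every coefficient has nonnegative $p$-adic valuation. Proposition~\ref{prop:four-step-integrality} then reduces integral monic realization to the primes dividing the nonzero integer $A(0)B(0)$, and there are finitely many of these.

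First, fix $p\nmid ab$. Then $\alpha=\beta=0$, and the recursion \eqref{eq:four-step-valuation-recursion} gives
\[
\gamma=v_p(b_m)\ge0,\qquad \delta=v_p(a_n)+n\gamma,\qquad \varepsilon-\gamma=m\,v_p(a_n)+(nm-2)\gamma.
\]
Next, bound the endpoint defects of Definition~\ref{def:endpoint-defects}:
\begin{itemize}
\item Since $v_p(a_0)=v_p(b_0)=0$ and all $v_p(a_i),v_p(b_i)\ge0$, every term defining $\lambda_p(A)$ and $\lambda_p(B)$ is $\lceil -v_p(\cdot)/i\rceil\le0$. Hence $\lambda_p(A),\lambda_p(B)\le0$.
\item For the right defects, each term is $\lceil (v_p(b_m)-v_p(b_i))/(m-i)\rceil$ with $m-i\ge1$ and $v_p(b_i)\ge0$. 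Such a term is at most $\max(v_p(b_m),0)=v_p(b_m)$. Hence $\rho_p(B)\le v_p(b_m)=\gamma$, and likewise $\rho_p(A)\le v_p(a_n)$.
\end{itemize}

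Now check the four inequalities in turn.
\begin{enumerate}
\item $\alpha=0\ge\lambda_p(B)$.
\item $\gamma\ge0\ge\lambda_p(A)$.
\item $\delta-\alpha=v_p(a_n)+n\gamma\ge\gamma\ge\rho_p(B)$, using $n\ge1$ and $v_p(a_n)\ge0$.
\item $\varepsilon-\gamma=m\,v_p(a_n)+(nm-2)\gamma\ge v_p(a_n)\ge\rho_p(A)$, using $m\ge1$, $\gamma\ge0$, and $nm\ge2$.
\end{enumerate}

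The only step needing care is the fourth, because it is the only place the hypothesis $nm\ge2$ enters. In the linear--linear case the coefficient $nm-2$ equals $-1$, and a positive $v_p(b_m)$ at a prime not dividing $ab$ could then violate the inequality; this explains the exclusion in the statement.

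Finally, apply Proposition~\ref{prop:four-step-integrality}. Every prime outside the finite set of divisors of $A(0)B(0)\neq0$ satisfies \eqref{eq:four-step-integrality-criterion}. Integrality of $B_1,A_1,B_2,A_2$ is therefore decided by checking the four inequalities at those finitely many primes.
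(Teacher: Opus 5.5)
Your proof is correct and follows essentially the same route as the paper: you set $\alpha=\beta=0$, bound $\lambda_p(A),\lambda_p(B)\le 0$, $\rho_p(B)\le v_p(b_m)$ and $\rho_p(A)\le v_p(a_n)$, then check the four inequalities via the valuation recursion, with $nm\ge2$ entering only in the fourth. You also spell out the ceiling estimate behind the right-defect bounds, which the paper leaves implicit.
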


\begin{proof}
Fix $p\nmid ab$, so $\alpha=\beta=0$.  Since $A,B\in\mathbb Z[T]$,
\[
  \lambda_p(A)\le0,
  \qquad
  \lambda_p(B)\le0.
\]
Put
\[
  \eta\defeq v_p(a_n),
  \qquad
  \theta\defeq v_p(b_m).
\]
Then \eqref{eq:four-step-valuation-recursion} gives
\[
  \gamma=\theta,
  \qquad
  \delta=\eta+n\theta,
  \qquad
  \varepsilon-\gamma
  =m\eta+(mn-2)\theta.
\]
Also
\[
  \rho_p(B)\le\theta,
  \qquad
  \rho_p(A)\le\eta.
\]
Hence the third inequality follows from
$\delta\ge\theta$, and the fourth from
$m\eta+(mn-2)\theta\ge\eta$ when $mn\ge2$.
\end{proof}

\begin{remark}\label{rem:linear-linear-extra-primes}
The exclusion of the linear--linear case in
Corollary~\ref{cor:four-step-obstruction-primes} is necessary.  For
\[
  A(T)=T+1,
  \qquad
  B(T)=2T+1,
\]
one has $A(0)B(0)=1$, but the successive monic states are
\[
  B_1(T)=T+2,
  \quad
  A_1(T)=T+2,
  \quad
  B_2(T)=T+1,
  \quad
  A_2(T)=T+\frac12.
\]
Thus the new obstruction prime $2$ can enter through a leading
coefficient when $n=m=1$.
\end{remark}

\begin{proposition}[denominator propagation]\label{prop:four-step-denominators}
Write
\[
  A(T)=\sum_{i=0}^{n}a_iT^i,
  \qquad
  B(T)=\sum_{j=0}^{m}b_jT^j,
\]
and retain $\alpha,\beta,\gamma,\delta,\varepsilon$ from
Proposition~\ref{prop:four-step-integrality}.  For a rational polynomial
$Q$, let $d_p(Q)$ denote the exponent of $p$ in the least common
denominator of its coefficients.  With $v_p(0)=+\infty$, one has
\begin{align}
  d_p(B_1)
  &=\max\left(0,
    \max_{1\le j\le m}
    \{\beta-v_p(b_j)-j\alpha\}\right),
  \label{eq:denom-B1}\\
  d_p(A_1)
  &=\max\left(0,
    \max_{1\le i\le n}
    \{\alpha-v_p(a_i)-i\gamma\}\right),
  \label{eq:denom-A1}\\
  d_p(B_2)
  &=\max\left(0,
    \max_{0\le j<m}
    \{v_p(b_m)-v_p(b_j)-(m-j)(\delta-\alpha)\}\right),
  \label{eq:denom-B2}\\
  d_p(A_2)
  &=\max\left(0,
    \max_{0\le i<n}
    \{v_p(a_n)-v_p(a_i)-(n-i)(\varepsilon-\gamma)\}\right).
  \label{eq:denom-A2}
\end{align}
In particular, Proposition~\ref{prop:four-step-integrality} is equivalent
to the vanishing of these four denominator exponents at every prime.
\end{proposition}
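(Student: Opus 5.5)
The plan is a direct coefficient computation. For each of the four states, write its coefficients explicitly in terms of $A$ and $B$ and the constants $a,b,q,r,t$. Then read off the $p$-adic valuations and take the negative part of their minimum. The basic fact is that for $Q=\sum_k c_kT^k\in\mathbb Q[T]$ one has $d_p(Q)=\max(0,\max_k\{-v_p(c_k)\})$, with zero coefficients contributing $-\infty$ (consistent with $v_p(0)=+\infty$). So each formula reduces to listing the coefficient valuations.

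For $B_1=\Mcal_aB=\frac1b\Rcal_aB$, the coefficient of $T^{m-j}$ is $b_ja^j/b$, with valuation $v_p(b_j)+j\alpha-\beta$. The term $j=0$ is the leading coefficient $1$, with valuation $0$, so it is absorbed into the outer $\max(0,\cdot)$. This gives \eqref{eq:denom-B1}. In the same way, $A_1=\frac1a\Rcal_qA$ has coefficient $a_iq^i/a$ at $T^{n-i}$, which gives \eqref{eq:denom-A1}.

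For $B_2$ and $A_2$ I would not iterate the reciprocal by hand. Instead I use the closed forms \eqref{eq:gauge-four-return}. The identity $B_2(T)=\frac tbB(\frac arT)$ gives the coefficient of $T^j$ as $\frac{t}{b}b_j(a/r)^j$. Its valuation is $\varepsilon-\beta+v_p(b_j)+j(\alpha-\delta)$. To match the stated form, I use monicity of $B_2$: the $j=m$ coefficient equals $1$, so $\varepsilon-\beta+v_p(b_m)+m(\alpha-\delta)=0$. This can also be checked directly from \eqref{eq:four-step-valuation-recursion}, since $\varepsilon=m\delta-\gamma$ and $\gamma=v_p(b_m)+m\alpha-\beta$. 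Subtracting this zero, the valuation becomes $v_p(b_j)-v_p(b_m)+(m-j)(\delta-\alpha)$, and negating yields \eqref{eq:denom-B2}. Similarly, $A_2(T)=\frac uaA(\frac qtT)$ has coefficient valuations $v_p(u)-\alpha+v_p(a_i)+i(\gamma-\varepsilon)$. Normalizing by the monic coefficient at $i=n$ gives $v_p(a_i)-v_p(a_n)+(n-i)(\varepsilon-\gamma)$, and hence \eqref{eq:denom-A2}.

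For the final equivalence, a polynomial lies in $\mathbb Z_p[T]$ exactly when $d_p=0$. So the vanishing of all four exponents at every prime is precisely $B_1,A_1,B_2,A_2\in\mathbb Z[T]$, which is the conclusion of Proposition~\ref{prop:four-step-integrality}. If one also wants to see the matching with the inequalities \eqref{eq:four-step-integrality-criterion}, that follows from $\lceil\cdot\rceil$ manipulation: $d_p(B_1)=0$ iff $j\alpha\ge\beta-v_p(b_j)$ for all $j\ge1$ iff $\alpha\ge\lambda_p(B)$, and likewise for the others with $\rho_p$. The only step needing care is the normalization by the monic leading coefficient in the last two formulas. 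It is bookkeeping via the valuation recursion, not a real obstacle, and the excluded index ($j=0$, resp. $j=m$) is exactly the monic coefficient.
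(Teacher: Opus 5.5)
Your proposal is correct and follows the paper's proof essentially step for step: you read off the coefficients $b_ja^j/b$ and $a_iq^i/a$ for $B_1$ and $A_1$, and you use the gauge-return formulas \eqref{eq:gauge-four-return} together with the valuation recursion \eqref{eq:four-step-valuation-recursion} (equivalently, monicity) to eliminate the scalar factor for $B_2$ and $A_2$. Your explicit ceiling check, matching the vanishing of each $d_p$ with the corresponding inequality in \eqref{eq:four-step-integrality-criterion}, is a small addition that the paper leaves implicit.
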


\begin{proof}
For the first two states, the coefficient of $T^{m-j}$ in $B_1$ is
$b_ja^j/b$, while the coefficient of $T^{n-i}$ in $A_1$ is
$a_iq^i/a$; taking the negative parts of their valuations gives
\eqref{eq:denom-B1}--\eqref{eq:denom-A1}.

For the last two states, use the gauge-return formulas
\eqref{eq:gauge-four-return}.  The coefficient of $T^j$ in $B_2$ has
valuation
\[
  v_p(b_j)-v_p(b_m)+(m-j)(\delta-\alpha),
\]
where \eqref{eq:four-step-valuation-recursion} has been used to eliminate
the scalar factor.  This gives \eqref{eq:denom-B2}.  The same calculation
for $A_2$ gives \eqref{eq:denom-A2}.
\end{proof}

\begin{remark}[monic denominators versus primitive content]\label{rem:monic-denominator-content}
The denominator formulas also admit a direct primitive-content interpretation.  If $P\in\mathbb Z[T]$,
$c\in\mathbb Z\setminus\{0\}$, and
\[
  g\defeq\cont(\Rcal_cP),
\]
then $g\mid |P(0)|$.  If $D$ is the least positive integer for which
$D\Mcal_cP\in\mathbb Z[T]$, then
\begin{equation}\label{eq:monic-denominator-content}
  D=\frac{|P(0)|}{g},
  \qquad
  D\Mcal_cP
  =\operatorname{sgn}(P(0))\frac{\Rcal_cP}{g}.
\end{equation}
Thus the least denominator of the monic normalization measures exactly
how much of the endpoint scalar remains after removing the full primitive
content of the raw reciprocal transform.
\end{remark}

\begin{corollary}[common-constant indefinite integral realization]\label{cor:common-constant-integral-gauge}
Suppose $A,B\in\Z[T]$ have degrees $n,m\ge2$ and
\[
  A(0)=B(0)=c\ne0.
\]
Then every polynomial produced by the forward alternating canonical monic
reciprocal normalizations is monic and lies in $\Z[T]$.  In particular,
the four polynomials $B_1,A_1,B_2,A_2$ of
Proposition~\ref{prop:four-step-gauge-closure} are integral and primitive.
If $A$ and $B$ are monic with constant term $1$, then
$c=q=r=t=1$, so the first four steps give a literal return.
\end{corollary}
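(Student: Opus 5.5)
The plan is to run an induction along the chain of constant terms, isolating one divisibility invariant that makes every monic normalization integral. Write the forward alternating sequence as $P_1=B$, $P_2=A$, $P_3=B_1$, $P_4=A_1,\dots$. The $k$-th output is $Q_k\defeq\Mcal_{\kappa_k}P_k$, and its reciprocal parameter $\kappa_k$ is the constant term of the most recently produced polynomial. Concretely $\kappa_1=A(0)=c$, $\kappa_2=q=B_1(0)$, $\kappa_3=r$, and so on, with $P_{k+2}=Q_k$. The invariant I would prove is
\[
  P_k(0)\mid\kappa_k\quad\text{in }\Z,\qquad P_k\in\Z[T],\qquad \deg P_k\ge2 .
\]
For $k=1$ this holds because $B(0)=c=\kappa_1$. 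For $k=2$, the one-step lemma below applied at $k=1$ gives $c\mid q$, and $A(0)=c$.

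The key computation is a one-step lemma. Let $P=\sum_{i=0}^d p_iT^i\in\Z[T]$ with $d\ge2$ and $p_0p_d\ne0$, and let $\kappa\in\Z\setminus\{0\}$ with $p_0\mid\kappa$. The coefficient of $T^{d-i}$ in $\Mcal_\kappa P$ is $p_i\kappa^i/p_0$. For $i=0$ it equals $1$. For $i\ge1$ it is an integer because $p_0\mid\kappa$. So $\Mcal_\kappa P$ is monic and lies in $\Z[T]$; equivalently, $v_p(\kappa)\ge v_p(p_0)\ge\lambda_p(P)$ at every prime, as in Proposition~\ref{prop:one-step-monic-integrality}. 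Its constant term is $p_d\kappa^d/p_0=p_d\,\kappa^{d-1}(\kappa/p_0)$, which is a nonzero integer divisible by $\kappa$ because $d\ge2$. Its degree is again $d$, since $p_0\ne0$.

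To propagate the invariant, note that the parameter $\kappa_{k+2}$ used on $P_{k+2}=Q_k$ is the constant term of $Q_{k+1}$. The lemma at step $k+1$ shows this is divisible by $\kappa_{k+1}$. Also $\kappa_{k+1}=Q_k(0)=P_{k+2}(0)$, by the definition of the parameters. Hence $P_{k+2}(0)\mid\kappa_{k+2}$, and integrality and degree are preserved by the lemma. Thus the chain of constant terms $c,q,r,t,u,\dots$ is a divisibility chain, and every output is integral and monic. Primitivity is then immediate, since a monic integer polynomial has content $1$.

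For the literal-return statement, take $A,B$ monic with $c=1$. Then \eqref{eq:gauge-four-constants} gives $q=b_mc^{m-1}=1$, $r=a_nq^n/c=1$, $t=r^m/q=1$ and $u=1$. Substituting these into \eqref{eq:gauge-four-return} yields $B_2=B$ and $A_2=A$. The only delicate point I expect is bookkeeping: one must check that the parameter for each step is exactly the constant term of the immediately preceding output, so that the divisibility produced at step $k+1$ is precisely what step $k+2$ needs. The hypothesis $d\ge2$ enters only to get $\kappa\mid$ the new constant term, and this is consistent with Remark~\ref{rem:linear-linear-extra-primes}, where $n=m=1$ fails.
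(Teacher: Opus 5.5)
Your proposal is correct and matches the paper's proof essentially step for step. The paper uses the same invariant (the constant term of the polynomial about to be transported divides the reciprocal parameter) and the same coefficient computation, which gives $\Mcal_\kappa P\in\Z[T]$ with new constant term $p_d\kappa^{d-1}(\kappa/p_0)$ divisible by $\kappa$ when $d\ge2$. It then runs the same role-swap induction starting from $A(0)=B(0)=c$, obtains primitivity from monicity, and gets the literal return by substituting $c=q=r=t=1$ into \eqref{eq:gauge-four-return}.
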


\begin{proof}
We use the following divisibility invariant.  Suppose the reciprocal
parameter is $a\in\Z\setminus\{0\}$ and the polynomial to be
transported is
\[
 Q(T)=b+\sum_{i=1}^d q_iT^i\in\Z[T],
 \qquad d\ge2,\qquad b\mid a.
\]
Then
\[
 \Mcal_aQ(T)=T^d+
 \sum_{i=1}^d q_i\frac ab\,a^{i-1}T^{d-i}
 \in\Z[T].
\]
Its constant term $b'\defeq q_da^d/b$ satisfies
\[
 \frac{b'}a=q_d\frac ab\,a^{d-2}\in\Z.
\]
Thus, after swapping the polynomial roles, the constant term of the next
polynomial to be transported again divides the next reciprocal
parameter.  Initially this holds because the two constants equal $c$.
Induction proves that every newly produced monic polynomial is integral;
being monic, it is also primitive.  If the initial pair is monic with
constant term $1$, all four constants in
\eqref{eq:gauge-four-constants} equal $1$, and the return formulas
\eqref{eq:gauge-four-return} give the asserted literal return.
\end{proof}

\begin{remark}[coefficient closure versus arithmetic realization]\label{rem:rational-gauge-versus-integral}
Proposition~\ref{prop:four-step-gauge-closure} is purely coefficient-level,
whereas Proposition~\ref{prop:four-step-integrality} is exact at the
integer-coefficient level.  The common-constant hypothesis of
Corollary~\ref{cor:common-constant-integral-gauge} is sufficient but not
necessary.  For example,
\[
  A(T)=T^2-3T+2,
  \qquad
  B(T)=T^2-3T+1
\]
has unequal constant terms but the four monic states are
\[
  B_1=T^2-6T+4,
  \quad
  A_1=T^2-6T+8,
  \quad
  B_2=T^2-12T+16,
  \quad
  A_2=T^2-12T+32.
\]
Late obstructions can also occur.  For
\[
  A(T)=T^2+2T+4,
  \qquad
  B(T)=T^2+2T+8,
\]
the first two monic states are $T^2+T+2$ and $T^2+T+1$, but
\[
  B_2(T)=T^2+\frac12T+\frac12.
\]
Likewise, for
\[
  A(T)=T^2+2T+8,
  \qquad
  B(T)=T^2+4T+16,
\]
the first three monic states are integral while
\[
  A_2(T)=T^2+\frac12T+\frac12.
\]
Thus all four inequalities in
\eqref{eq:four-step-integrality-criterion} are genuinely needed.  Integral coefficient realization is a property of the polynomial states themselves; whether a particular integer point follows those states is a separate arithmetic question.
\end{remark}

\section{Iterated integral dynamics by degree product}\label{sec:iterated-gauge}

The one-block criterion above can be iterated because after one universal
four-step block both coefficient polynomials are monic.  For a monic pair
$A,B\in\Z[T]$ of positive degrees $n,m$, with nonzero constant terms
\[
  a\defeq A(0),\qquad b\defeq B(0),\qquad s\defeq nm,
\]
write $\Phi(A,B)=(A^+,B^+)$ for the final pair after one complete
four-step block.  For a prime $p$, put
\[
  \alpha\defeq v_p(a),\qquad \beta\defeq v_p(b).
\]
The constant-term formulas \eqref{eq:gauge-four-constants} give
\begin{equation}\label{eq:block-M}
  \begin{pmatrix}
    v_p(A^+(0))\\[1mm]
    v_p(B^+(0))
  \end{pmatrix}
  =M_{n,m}
  \begin{pmatrix}\alpha\\ \beta\end{pmatrix},
  \qquad
  M_{n,m}\defeq
  \begin{pmatrix}
    s^2-3s+1&-n(s-2)\\
    m(s-2)&-(s-1)
  \end{pmatrix}.
\end{equation}
One has $\det M_{n,m}=1$ and
$\operatorname{tr}M_{n,m}=s^2-4s+2$.  In the standard $\operatorname{SL}_2$
classification, $s=1,3$ are elliptic of finite order, $s=2$ gives the
central finite-order matrix $-I$, $s=4$ is parabolic, and $s>4$ is
hyperbolic.  We group the cases $s<4$ as the finite-order, elliptic-type
regime.

\begin{proposition}[finite-order low-product blocks]\label{prop:elliptic-blocks}
Suppose $s=nm<4$.  The forward monic coefficient orbit of $(A,B)$ is
integral indefinitely if and only if
\begin{equation}\label{eq:elliptic-unit-constants}
  |A(0)|=|B(0)|=1.
\end{equation}
Under \eqref{eq:elliptic-unit-constants}, the complete-block dynamics is
periodic.  If $s\in\{1,3\}$, then
\begin{equation}\label{eq:elliptic-block-map-13}
  \Phi(A,B)
  =\bigl(bA(bT),\,abB(abT)\bigr),
  \qquad
  \Phi^3(A,B)=(A,B).
\end{equation}
If $(n,m)=(1,2)$, then
\begin{equation}\label{eq:elliptic-block-map-12}
  \Phi(A,B)=\bigl(A(T),\,B(bT)\bigr),
  \qquad
  \Phi^2(A,B)=(A,B),
\end{equation}
and if $(n,m)=(2,1)$, then
\begin{equation}\label{eq:elliptic-block-map-21}
  \Phi(A,B)=\bigl(A(aT),\,B(T)\bigr),
  \qquad
  \Phi^2(A,B)=(A,B).
\end{equation}
Consequently forward and two-sided integral realization are equivalent in
every low-product degree pattern, and both are characterized by
\eqref{eq:elliptic-unit-constants}.
\end{proposition}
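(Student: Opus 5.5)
The plan is to prove the two directions separately. Necessity comes from the finite order of $M_{n,m}$ together with a positivity argument on valuations. Sufficiency and the explicit block maps come from direct evaluation of \eqref{eq:gauge-four-constants} and \eqref{eq:gauge-four-return} when all constants are units. Throughout, $A,B\in\Z[T]$ are monic of degrees $n,m$ with $nm=s\in\{1,2,3\}$, so $(n,m)\in\{(1,1),(1,2),(2,1),(1,3),(3,1)\}$.

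\emph{Necessity.} Suppose the forward monic orbit is integral indefinitely. Every state is then a monic polynomial in $\Z[T]$ with nonzero constant term, so every constant term is a nonzero integer. Fix a prime $p$ and set $w_k\defeq\bigl(v_p(A^{(k)}(0)),v_p(B^{(k)}(0))\bigr)^{\mathsf T}$, where $(A^{(k)},B^{(k)})=\Phi^k(A,B)$. By \eqref{eq:block-M}, $w_k=M_{n,m}^k w_0$, and every $w_k$ has nonnegative entries.

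Now $\operatorname{tr}M_{n,m}=s^2-4s+2$ equals $-1,-2,-1$ for $s=1,2,3$, and $\det M_{n,m}=1$. Hence $M=M_{n,m}$ has finite order $N\in\{3,2,3\}$ and does not have $1$ as an eigenvalue. From $(M-I)(I+M+\dots+M^{N-1})=M^N-I=0$ and the invertibility of $M-I$, we get $\sum_{k=0}^{N-1}M^k=0$. Therefore $\sum_{k<N}w_k=0$ is a sum of nonnegative vectors, which forces $w_0=0$. So $v_p(a)=v_p(b)=0$ for every $p$, which gives \eqref{eq:elliptic-unit-constants}. This step only uses the constant terms at block boundaries, so no analysis of the intermediate defects is needed.

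\emph{Sufficiency and the formulas.} Assume $a,b\in\{\pm1\}$. Since $a_n=b_m=1$, \eqref{eq:gauge-four-constants} gives $q=a^m b$, $r=q^n a$, $t=r^m q$ and $u=t^n r$, all in $\{\pm1\}$; here $1/x=x$ for $x=\pm1$. Each update $\Mcal_cP$ with $c=\pm1$ and $P(0)=\pm1$ has integer coefficients, so the block is integral. Moreover $\Phi(A,B)$ is again monic with unit constants $u$ and $t$, and induction gives integrality indefinitely.

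For the explicit maps, substitute into \eqref{eq:gauge-four-return}, namely $A^+=uaA(qtT)$ and $B^+=tbB(arT)$, and simplify with $x^2=1$ in each case:
\begin{itemize}[nosep]
\item For $(1,1)$: $q=ab$, $r=b$, $t=a$, $u=ab$, which gives $\Phi(A,B)=(bA(bT),abB(abT))$.
\item For $(1,3)$ and $(3,1)$: the same bookkeeping with cubes equal to the base again yields \eqref{eq:elliptic-block-map-13}.
\item For $(1,2)$: $q=b$, $r=ab$, $t=b$, $u=a$, hence $A^+=A(T)$ and $B^+=B(bT)$.
\item For $(2,1)$: symmetrically, $\Phi(A,B)=(A(aT),B(T))$.
\end{itemize}

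The periodicity claims follow by iterating with the new constants. In the case $s\in\{1,3\}$, the new pair has constants $a'=ab$ and $b'=b$; applying the map three times gives scalings $b\cdot b\cdot b$ combined with $a$-factors that cancel. The cleanest check is that $A^{(k)}(T)=\epsilon_kA(\epsilon_kT)$-type expressions compose multiplicatively in $\{\pm1\}$, and that the sign bookkeeping over three steps multiplies to $1$. For $(1,2)$, $B(bT)$ has constant term $b$, so $\Phi^2(A,B)=(A,B(b^2T))=(A,B)$, and $(2,1)$ is analogous.

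Since $\Phi$ is then a periodic bijection on this set of integral states, backward continuation is just forward iteration. Hence two-sided integrality is equivalent to forward integrality, and both are characterized by \eqref{eq:elliptic-unit-constants}.

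The main obstacle I expect is the sign bookkeeping in the $(1,3)$ and $(3,1)$ cases, both in obtaining exactly the stated formula and in checking $\Phi^3=\mathrm{id}$. I would handle it by tracking constants as elements of $\{\pm1\}$, using $x^3=x$ and $x^2=1$ in $\{\pm1\}$ to reduce every exponent mod $2$.
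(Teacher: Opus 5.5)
Your architecture is the paper's own. Necessity comes from showing that the valuation vectors over one period of $M_{n,m}$ are nonnegative and sum to zero. Sufficiency and the block maps come from substituting unit constants into \eqref{eq:gauge-four-constants}--\eqref{eq:gauge-four-return}, and the periods from direct iteration. Your values of $q,r,t,u$ and the resulting block formulas are correct in all five degree patterns. Two steps, however, fail as written.

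First, for $s=2$ you infer finite order from $\operatorname{tr}M_{n,m}=-2$ and $\det M_{n,m}=1$ alone. That inference is invalid: $\begin{pmatrix}-1&1\\0&-1\end{pmatrix}$ has the same trace and determinant but infinite order, and $I+M\neq0$ for it. What you need, and what the paper uses, is that $M_{n,m}=-I$ exactly when $s=2$. This is visible from \eqref{eq:block-M}: the off-diagonal entries carry the factor $s-2$, and both diagonal entries equal $-1$. For $s\in\{1,3\}$ your argument is fine, since trace $-1$ and determinant $1$ give $M^2+M+I=0$ by Cayley--Hamilton.

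Second, your check of $\Phi^3=\mathrm{id}$ for $s\in\{1,3\}$ uses $b'=b$. In fact $b'=B^+(0)=t=a$: your own $(1,1)$ computation gives $t=a$, and directly $abB(abT)$ has constant term $ab\cdot b=a$. With $b'=b$, the dilations applied to $A$ would be $b,b,b$, with product $b$. Your sketch would then give $A$-component $bA(bT)$ after three blocks rather than $A$, and the appeal to ``sign bookkeeping'' does not repair this.

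The correct computation runs as follows. Applying $X(T)\mapsto\epsilon'X(\epsilon'T)$ to $\epsilon A(\epsilon T)$, with $\epsilon,\epsilon'\in\{\pm1\}$, gives $(\epsilon\epsilon')A(\epsilon\epsilon'T)$, so the dilations compose multiplicatively. The constants cycle as $(a,b)\mapsto(ab,a)\mapsto(b,ab)\mapsto(a,b)$. Hence the factors applied to $A$ are $b,a,ab$ and those applied to $B$ are $ab,b,a$, each with product $1$. This gives $\Phi^2(A,B)=\bigl(abA(abT),\,aB(aT)\bigr)$ and $\Phi^3(A,B)=(A,B)$. Your $(1,2)$ and $(2,1)$ periods and the two-sided equivalence are fine as written.
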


\begin{proof}
For $s=2$, the matrix in \eqref{eq:block-M} is $-I$.  If the forward
orbit is integral, then $(\alpha,\beta)$ and
$(-\alpha,-\beta)$ are both componentwise nonnegative, hence
$\alpha=\beta=0$.  For $s=1$ or $s=3$, the same matrix satisfies
\[
  M_{n,m}^2+M_{n,m}+I=0.
\]
The three valuation vectors
$v,M_{n,m}v,M_{n,m}^2v$, where $v=(\alpha,\beta)^T$, are componentwise
nonnegative along an integral forward orbit and sum to zero.  Hence again
$\alpha=\beta=0$.  Since this holds for every prime,
$|a|=|b|=1$.

Conversely, signed-unit constant terms make every reciprocal parameter and
every monic normalization divisor a signed unit, so every intermediate
polynomial remains integral.  Substitution into
\eqref{eq:gauge-four-constants}--\eqref{eq:gauge-four-return} gives
\eqref{eq:elliptic-block-map-13} for $s=1,3$ and
\eqref{eq:elliptic-block-map-12}--\eqref{eq:elliptic-block-map-21} for
$s=2$.  The displayed periods follow by direct iteration.  In particular
the same finite integral cycle extends in both block directions.
\end{proof}

The parabolic case has a different rigidity: the block does not usually
return periodically, but it acts by a fixed rational dilation.

\begin{proposition}[parabolic degree-product blocks]\label{prop:parabolic-blocks}
Suppose $s=nm=4$.  Define
\begin{equation}\label{eq:parabolic-CD}
  C\defeq\frac{a^2}{b^n},
  \qquad
  D\defeq\frac{a^m}{b^2}.
\end{equation}
Then $C^m=D^2$ and $D^n=C^2$, and one complete block is
\begin{equation}\label{eq:parabolic-block-map}
  \Phi(A,B)
  =\left(D^nA(T/D),\,C^mB(T/C)\right).
\end{equation}
The two dilation parameters $C,D$ are invariant under $\Phi$.  Hence, if
$(A_k,B_k)\defeq\Phi^k(A,B)$, then for every $k\ge0$,
\begin{equation}\label{eq:parabolic-block-iterates}
  A_k(T)=D^{nk}A(T/D^k),
  \qquad
  B_k(T)=C^{mk}B(T/C^k).
\end{equation}

Write $B_{k,1},A_{k,1}$ for the first two intermediate monic states in
block $k$, and let $B_{0,1},A_{0,1}$ be those in the initial block.  Then
\begin{equation}\label{eq:parabolic-intermediate-iterates}
  B_{k,1}(T)=C^{mk}B_{0,1}(T/C^k),
  \qquad
  A_{k,1}(T)=D^{nk}A_{0,1}(T/D^k).
\end{equation}
Consequently the forward monic coefficient orbit is integral indefinitely
if and only if the first complete block is integral and
\begin{equation}\label{eq:parabolic-forward-criterion}
  C=\frac{A(0)^2}{B(0)^n}\in\Z.
\end{equation}
It is integral in both block directions if and only if the first complete
block is integral and
\begin{equation}\label{eq:parabolic-two-sided-criterion}
  C\in\{\pm1\}.
\end{equation}
In the latter case the complete-block coefficient orbit has period at most
$2$.
\end{proposition}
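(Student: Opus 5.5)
The plan is to compute the complete block explicitly from Proposition~\ref{prop:four-step-gauge-closure} and then read off every claim from the resulting dilation formula. Since $A,B$ are monic, \eqref{eq:gauge-four-constants} gives $q=a^m/b$, $r=q^n/a$, $t=r^m/q$, $u=t^n/r$. I will express these as monomials $a^{\ast}b^{\ast}$ and use $nm=4$ throughout. This gives $r=a^3b^{-n}$ and $t=a^{2m}b^{-3}$, hence $a/r=b^n/a^2=1/C$ and $t/b=a^{2m}b^{-4}=C^m$. Likewise $u=a^5b^{-2n}$ gives $q/t=b^2/a^m=1/D$ and $u/a=a^4b^{-2n}=D^n$. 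Substituting into \eqref{eq:gauge-four-return} yields \eqref{eq:parabolic-block-map}. The identities $C^m=D^2$ and $D^n=C^2$ are immediate from $nm=4$, since both sides are $a^{2m}b^{-4}$, respectively $a^4b^{-2n}$.

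For invariance, the new constant terms are $a^+=D^na$ and $b^+=C^mb$. Then
\[
C^+=\frac{D^{2n}a^2}{C^{mn}b^n}=\frac{C^4}{C^4}\,C=C,
\]
and symmetrically $D^+=D$. Induction on $k$ then gives \eqref{eq:parabolic-block-iterates}, because composing the dilations $T\mapsto T/D$ multiplies their scalars. For \eqref{eq:parabolic-intermediate-iterates}, I will apply the monic normalization directly. We have $B_{k,1}=\Mcal_{a_k}B_k$ with $a_k=D^{nk}a$ and $B_k(T)=C^{mk}B(T/C^k)$. Writing out the coefficient of $T^{m-j}$ and using $D^{2n}=C^4$, the same scalar bookkeeping should give $C^{mk}B_{0,1}(T/C^k)$. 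The analogous computation handles $A_{k,1}$.

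For the forward criterion, the key point is that $C\in\Z$ forces $D\in\Z$: from $D^2=C^m$, $D$ is a rational number whose square is an integer, so $D$ is an integer. A dilation $P(T)\mapsto E^{\deg P}P(T/E)$ with $E\in\Z$ multiplies the coefficient of $T^j$ by $E^{\deg P-j}$. It therefore preserves integrality. Consequently all four intermediate states of every block are integral once those of block $0$ are; the last two states of block $k$ are the first two of block $k+1$. Conversely, suppose the forward orbit is integral. If $v_p(C)<0$ for some $p$, then $v_p(B_k(0))=v_p(b)+mk\,v_p(C)\to-\infty$, which contradicts integrality. Hence $C\in\Z$.

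For the two-sided statement, I note that the block map \eqref{eq:parabolic-block-map} is invertible within the monic class and that the inverse is given by the same formula with $C^{-1},D^{-1}$. Hence the backward orbit is \eqref{eq:parabolic-block-iterates} with $k<0$, and the same valuation argument requires both $C$ and $C^{-1}$ to be integral, that is $C=\pm1$. Conversely, if $C=\pm1$ then $D^2=C^m=\pm1$, so $D=\pm1$. Then $\Phi^2(A,B)=(D^{2n}A(T/D^2),C^{2m}B(T/C^2))=(A,B)$, which gives period at most $2$ and two-sided integrality. I expect the main obstacle to be making "integral in the backward direction" precise. One must check that the intermediate states of the backward blocks are also the dilated images described by \eqref{eq:parabolic-intermediate-iterates} with negative $k$. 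I would handle this by verifying that inverting each reciprocal step reproduces the same monomial bookkeeping.
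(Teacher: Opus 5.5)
Your proposal is correct and is essentially the paper's proof. It uses the same monomial computation of $q,r,t,u$ fed into the gauge-return formulas, the same invariance check for $C,D$, and the same direct normalization of the dilated pair for the intermediate states. It also uses the same constant-term valuation drift for the forward and two-sided criteria. The paper tracks $v_p(A_k(0))=\alpha+2k\,v_p(C)$ and gets $D\in\Z$ from $v_p(D)=\tfrac m2 v_p(C)$, which is equivalent to your rational-square-root argument.

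There are two harmless slips. First, the intermediate-state bookkeeping needs $D^n=C^2$ itself (which you already proved) rather than $D^{2n}=C^4$. Second, the last two states of block $k$ are the \emph{starting} pair $(B_{k+1},A_{k+1})$ of block $k+1$, not its first two intermediate states; they are integral directly from \eqref{eq:parabolic-block-iterates}. Your explicit treatment of the inverse block via $C^{-1},D^{-1}$ is, if anything, more careful than the paper, which simply lets $k$ range over $\Z$.
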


\begin{proof}
For a monic pair with $s=4$, the constant terms in
\eqref{eq:gauge-four-constants} are
\[
  q=\frac{a^m}{b},\qquad
  r=\frac{a^3}{b^n},\qquad
  t=\frac{a^{2m}}{b^3},\qquad
  u=\frac{a^5}{b^{2n}}.
\]
Substitution into \eqref{eq:gauge-four-return} gives
\eqref{eq:parabolic-block-map}.  Its final constant terms are
$u=aD^n$ and $t=bC^m$, and a direct calculation gives
\[
  \frac{u^2}{t^n}=C,
  \qquad
  \frac{u^m}{t^2}=D,
\]
so $C,D$ are block invariants.  Induction proves
\eqref{eq:parabolic-block-iterates}.  Applying the first two normalizations
to the scaled pair gives \eqref{eq:parabolic-intermediate-iterates}.

For a prime $p$, put
\begin{equation}\label{eq:parabolic-dp}
  d_p\defeq v_p(C)=2\alpha-n\beta.
\end{equation}
Then
\[
  v_p(D)=\frac m2d_p
\]
and \eqref{eq:block-M} becomes
\begin{equation}\label{eq:parabolic-valuation-drift}
  v_p(A_k(0))=\alpha+2kd_p,
  \qquad
  v_p(B_k(0))=\beta+mk d_p.
\end{equation}
If the forward orbit is integral, these quantities cannot remain
nonnegative when $d_p<0$, so $d_p\ge0$ for every $p$, which is equivalent
to $C\in\Z$.  Conversely, $C\in\Z$ implies $D\in\Z$ because
$v_p(D)=m d_p/2\ge0$ for every prime.  If the first block is integral,
then \eqref{eq:parabolic-block-iterates} and
\eqref{eq:parabolic-intermediate-iterates} show that every later state is
integral.

For a two-sided integral orbit, \eqref{eq:parabolic-valuation-drift} must
remain nonnegative for all $k\in\Z$, forcing $d_p=0$ for every prime.
Thus $C\in\{\pm1\}$, and then also $D\in\{\pm1\}$.  Conversely, under
these signed-unit dilations the formulas above preserve integrality for
all $k\in\Z$ and give period at most $2$.
\end{proof}

\begin{corollary}[quadratic specialization]\label{cor:quadratic-block-iteration}
Let
\[
  A(T)=T^2+a_1T+a,
  \qquad
  B(T)=T^2+b_1T+b,
  \qquad ab\ne0,
\]
be monic quadratics and put $c\defeq a/b$.  Then
\begin{equation}\label{eq:quadratic-block-iterates}
  A_k(T)=T^2+a_1c^{2k}T+ac^{4k},
  \qquad
  B_k(T)=T^2+b_1c^{2k}T+bc^{4k}.
\end{equation}
The first two intermediate states in block $k$ are
\begin{align}
  B_{k,1}(T)&=T^2+b_1c^{2k+1}T+ac^{4k+1},\label{eq:quadratic-block-B1}\\
  A_{k,1}(T)&=T^2+a_1c^{2k+1}T+ac^{4k+2}.\label{eq:quadratic-block-A1}
\end{align}
The forward orbit is integral indefinitely if and only if $c\in\Z$, and
it is integral in both block directions if and only if $c=\pm1$.

For a prime $p$, put $e_p\defeq v_p(c)$ and use $v_p(0)=+\infty$.
If $e_p<0$, then all states through $N$ complete blocks are $p$-integral
if and only if
\begin{equation}\label{eq:quadratic-N-block-criterion}
\begin{aligned}
  v_p(a)+4Ne_p&\ge0,&
  v_p(b)+4Ne_p&\ge0,\\
  v_p(a_1)+2Ne_p&\ge0,&
  v_p(b_1)+2Ne_p&\ge0.
\end{aligned}
\end{equation}
If $e_p\ge0$, there is no $p$-adic obstruction.
\end{corollary}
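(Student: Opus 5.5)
The plan is to specialize Proposition~\ref{prop:parabolic-blocks} to $n=m=2$ and then read every integrality statement off explicit coefficient formulas. Two facts drive everything. First, all four intermediate states of every block can be written as monomial multiples of $c^{k}$. Second, the $p$-adic valuations of those coefficients are affine in $k$, with slope a multiple of $e_p$.

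\textbf{Explicit formulas.} With $n=m=2$ and $A,B$ monic, \eqref{eq:parabolic-CD} gives
\[
C=D=\frac{a^2}{b^2}=c^2 .
\]
Then \eqref{eq:parabolic-block-iterates} reads
\[
A_k(T)=c^{4k}A(T/c^{2k}),\qquad B_k(T)=c^{4k}B(T/c^{2k}).
\]
Expanding these gives \eqref{eq:quadratic-block-iterates} at once.

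For the intermediate states, first compute the initial block directly from \eqref{eq:gauge-four-B1}--\eqref{eq:gauge-four-A1}:
\[
B_{0,1}=\tfrac1b\Rcal_aB=\tfrac1b\,(bT^2+b_1aT+a^2)=T^2+b_1cT+ac .
\]
This gives $q=ac$, and then
\[
A_{0,1}=\tfrac1a\Rcal_qA=\tfrac1a\,(aT^2+a_1qT+q^2)=T^2+a_1cT+ac^2 .
\]
Applying \eqref{eq:parabolic-intermediate-iterates} with dilation $c^2$ yields \eqref{eq:quadratic-block-B1}--\eqref{eq:quadratic-block-A1}.

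By construction the last two states of block $k$ are $B_{k+1},A_{k+1}$. So the list $A_k,B_k,B_{k,1},A_{k,1}$ exhausts every state in the orbit.

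\textbf{Forward and two-sided criteria.} I will argue directly from the displayed formulas, which avoids checking the first-block hypothesis of Proposition~\ref{prop:parabolic-blocks} separately.

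If $c\in\Z$, every coefficient displayed above is an integer, because $a,b,a_1,b_1\in\Z$. Hence the forward orbit is integral.

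Conversely, suppose $c\notin\Z$. Then some prime $p$ has $e_p<0$. The constant term $ac^{4k}$ of $A_k$ has valuation $v_p(a)+4ke_p\to-\infty$, so integrality fails eventually.

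For the backward direction I use that $\Phi$ is an invertible dilation. Hence the formulas hold for all $k\in\Z$, with $c^{-1}$ replacing $c$ when $k<0$. The same argument then forces $e_p\le 0$ as well as $e_p\ge0$, so $e_p=0$ for every $p$, i.e.\ $c=\pm1$. When $c=\pm1$, every state has coefficients in $\{\pm a,\pm b,\pm a_1,\pm b_1,1\}$ and is therefore integral.

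\textbf{Finite-horizon criterion.} Fix $p$ with $e_p<0$. The states occurring through $N$ complete blocks are $A_k,B_k$ for $0\le k\le N$ together with $B_{k,1},A_{k,1}$ for $0\le k\le N-1$. Their coefficient valuations are
\[
v_p(a)+4ke_p,\quad v_p(b)+4ke_p,\quad v_p(a_1)+2ke_p,\quad v_p(b_1)+2ke_p
\]
for the block states. For the intermediate states they are
\[
v_p(b_1)+(2k+1)e_p,\quad v_p(a)+(4k+1)e_p,\quad v_p(a_1)+(2k+1)e_p,\quad v_p(a)+(4k+2)e_p .
\]
Since $e_p<0$, each of these is decreasing in $k$. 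Moreover, every intermediate valuation at $k\le N-1$ is at least the corresponding block valuation at $k=N$:
\[
(2k+1)\le 2N,\qquad (4k+2)\le 4N .
\]
So the minimum over all states is attained at $A_N,B_N$. This gives exactly the four inequalities \eqref{eq:quadratic-N-block-criterion}, with $v_p(0)=+\infty$ taking care of vanishing linear coefficients.

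If $e_p\ge0$, all valuations are at least their $k=0$ values, which are nonnegative. So there is no $p$-adic obstruction.

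The main point requiring care is this bookkeeping. One must identify precisely which states lie "through $N$ blocks" and check the intermediate-versus-endpoint domination. The rest is substitution.
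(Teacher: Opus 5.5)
Your proposal is correct and follows the paper's route: specialize Proposition~\ref{prop:parabolic-blocks} with $C=D=c^2$ and read integrality off coefficient valuations that are affine in $k$ with slope a multiple of $e_p$. The explicit computation of $B_{0,1},A_{0,1}$, the forward and two-sided arguments made directly from the formulas instead of citing the proposition, and the check that intermediate valuations dominate those of $A_N,B_N$ only spell out steps the paper leaves implicit.
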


\begin{proof}
Here $C=D=c^2$, so \eqref{eq:parabolic-block-iterates} and
\eqref{eq:parabolic-intermediate-iterates} give
\eqref{eq:quadratic-block-iterates}--\eqref{eq:quadratic-block-A1}.
Moreover $c^2\in\Z$ for $c\in\mathbb Q$ if and only if $c\in\Z$.  Thus
$c\in\Z$ makes the first block integral and Proposition~\ref{prop:parabolic-blocks}
gives the forward and two-sided assertions.  If $e_p<0$, every relevant
coefficient valuation in \eqref{eq:quadratic-block-iterates}--
\eqref{eq:quadratic-block-A1} decreases with $k$; evaluating the last
states through block $N$ gives exactly
\eqref{eq:quadratic-N-block-criterion}.  If $e_p\ge0$, all these
valuations are nondecreasing.
\end{proof}

\begin{corollary}[arbitrary low-product initial pair]\label{cor:low-product-general}
Let $A,B\in\Z[T]$ have positive degrees $n,m$ with $nm\le4$ and nonzero
constant terms, and suppose their first complete universal four-step block
is integral.  Let $(A_*,B_*)$ be the resulting monic pair, with constants
$a_*\defeq A_*(0)$ and $b_*\defeq B_*(0)$.

If $nm<4$, the forward coefficient orbit is integral indefinitely if and only if
$|a_*|=|b_*|=1$; in that case the resulting monic pair lies on a two-sided
integral periodic orbit.  If $nm=4$, the forward orbit is integral indefinitely if and only if
\begin{equation}\label{eq:parabolic-postblock-general}
  \frac{a_*^2}{b_*^n}\in\Z,
\end{equation}
and the resulting monic pair has a two-sided integral continuation if and only if this ratio lies in $\{\pm1\}$.
Writing $a_n\defeq\operatorname{lc}(A)$ and
$b_m\defeq\operatorname{lc}(B)$, the parabolic ratio in
\eqref{eq:parabolic-postblock-general} is
\begin{equation}\label{eq:parabolic-original-endpoints}
  \frac{a_*^2}{b_*^n}
  =a_n^2b_m^n\frac{A(0)^2}{B(0)^n}.
\end{equation}
In particular, for arbitrary quadratic initial pairs the forward condition
reduces to
\[
  \operatorname{lc}(A)\operatorname{lc}(B)
  \frac{A(0)}{B(0)}\in\Z,
\]
in agreement with the quadratic specialization above.
\end{corollary}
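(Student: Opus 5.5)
Since the first complete block is integral by hypothesis, the pair $(A_*,B_*)=(A_2,B_2)$ of Proposition~\ref{prop:four-step-gauge-closure} is monic with integer coefficients, positive degrees $n,m$, and nonzero constant terms $a_*=u$, $b_*=t$. The forward coefficient orbit of $(A,B)$ after this first block is exactly the forward monic orbit of $(A_*,B_*)$. Here I take the ordering of the alternating normalizations to continue with $B$ being transported next, as in the definition of $\Phi$. So the plan is to apply the monic-pair results of this section directly to $(A_*,B_*)$.

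For $nm<4$ I will invoke Proposition~\ref{prop:elliptic-blocks} with $(A,B)$ replaced by $(A_*,B_*)$. This gives the criterion $|a_*|=|b_*|=1$. It also gives a periodic block map, \eqref{eq:elliptic-block-map-13} or \eqref{eq:elliptic-block-map-12}--\eqref{eq:elliptic-block-map-21}, whose integrality extends in both directions. For $nm=4$ I will use Proposition~\ref{prop:parabolic-blocks} for $(A_*,B_*)$. Its criteria require the first complete block of $(A_*,B_*)$ to be integral, in addition to $C_*=a_*^2/b_*^n\in\Z$ (respectively $C_*\in\{\pm1\}$). This first block is the second block of the original orbit, so I need one extra observation to remove that requirement. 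Since $(A_*,B_*)$ is already integral, \eqref{eq:parabolic-block-iterates} and \eqref{eq:parabolic-intermediate-iterates} with $k=1$ express the second block's states as $D_*^{n}A_*(T/D_*)$, and so on. These are integral as soon as $C_*,D_*\in\Z$, because the states are monic and $D_*^{n}A_*(T/D_*)$ has coefficients $a_{*,i}D_*^{n-i}$. I would double-check this, including that the intermediate states $B_{*,1},A_{*,1}$ of the next block are also integral. If that becomes messy, I will go directly through Proposition~\ref{prop:four-step-integrality} applied to $(A_*,B_*)$: monic integrality makes $\lambda_p\le v_p(\text{const})$ and $\rho_p\le0$, and the four inequalities reduce to valuation inequalities controlled by $d_p=v_p(C_*)\ge0$. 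The necessity of $C_*\in\Z$ follows from the valuation drift \eqref{eq:parabolic-valuation-drift}. The two-sided statement follows in the same way: $d_p=0$ gives signed-unit dilations, and then the formulas hold for all $k\in\Z$.

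The remaining step is the identity \eqref{eq:parabolic-original-endpoints}, which is a valuation-free computation. From \eqref{eq:gauge-four-constants} I have $q=b_ma^m/b$, $r=a_nq^n/a$, $t=r^m/q$, and $u=t^n/r$. I will write $u^2/t^n=t^{n}/r^2$ and express everything in $a,b,a_n,b_m$, using $nm=4$. The exponent of $q$ in $t^n/r^2$ is $n(nm-1)-2n=n$, and the exponent of $r$ combined with $q$ simplifies similarly. Carrying this out should give $a_n^{2}b_m^{n}a^2/b^n$. I expect the bookkeeping of exponents to be the main error-prone point; I would verify it on the three patterns $(1,4),(2,2),(4,1)$ separately, if necessary.

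Finally, in the quadratic case $n=m=2$ the identity gives $C_*=(a_nb_m\,a/b)^2$. A rational number whose square is an integer is itself an integer, so the forward condition becomes $\operatorname{lc}(A)\operatorname{lc}(B)A(0)/B(0)\in\Z$. This agrees with Corollary~\ref{cor:quadratic-block-iteration}, where $C=c^2$.
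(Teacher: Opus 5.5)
\paragraph{Verdict.} Your overall route is the paper's, and most of it is right, but your argument removing the first-block hypothesis in the $nm=4$ case does not work as written.

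\paragraph{What matches the paper.} You apply Propositions~\ref{prop:elliptic-blocks} and~\ref{prop:parabolic-blocks} to $(A_*,B_*)$, obtain \eqref{eq:parabolic-original-endpoints} by substituting \eqref{eq:gauge-four-constants}, and use the square argument for $n=m=2$. Your reduction $u^2/t^n=t^n/r^2=r^2/q^n=a_n^2q^n/a^2$ is correct, and the case $nm<4$ is fine.

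\paragraph{Where the gap is.} You are right that Proposition~\ref{prop:parabolic-blocks}, applied to $(A_*,B_*)$, still needs the block \emph{starting} at $(A_*,B_*)$ to be integral. The paper's one-line proof leaves this implicit. Neither of your two justifications closes it.
\begin{itemize}
\item Anchored at $(A_*,B_*)$, formulas \eqref{eq:parabolic-block-iterates}--\eqref{eq:parabolic-intermediate-iterates} give only the end states $\Phi(A_*,B_*)$. They express later intermediate states through $B_{*,1}=\Mcal_{a_*}B_*$ and $A_{*,1}=\Mcal_{a_*^m/b_*}A_*$, which are exactly the states in question.
\item Your fallback bound $\lambda_p\le v_p(\text{const})$ is too weak for the two left-endpoint inequalities when $(n,m)\in\{(1,4),(4,1)\}$. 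For instance, $(A_*,B_*)=(T+2,\,T^4+T+4)$ is monic integral with $C_*=D_*=1$, yet $\Mcal_2(T^4+T+4)=T^4+\tfrac12T^3+4$. Swapping the roles gives the $(4,1)$ failure.
\end{itemize}
So integrality of $(A_*,B_*)$ together with $C_*\in\Z$ is not enough. You must use that $(A_*,B_*)$ is the output of an integral block.

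\paragraph{The missing step.} Let $B_1,A_1$ be the monic integral intermediate states of the original first block.
\begin{itemize}
\item By \eqref{eq:gauge-four-return}, $\frac1t\Rcal_uB_2=\frac1b\Rcal_{au/r}B$. Since $u/r=u^2/t^n=C_*$, this gives $B_{*,1}(T)=C_*^mB_1(T/C_*)$.
\item Similarly, $B_{*,1}(0)=u^m/t$ yields $A_{*,1}(T)=D_*^nA_1(T/D_*)$ with $D_*=u^m/t^2$.
\end{itemize}
Since $C_*\in\Z$ forces $D_*\in\Z$, the whole second block is integral. Proposition~\ref{prop:parabolic-blocks} then applies as stated, for the two-sided claim as well.

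Equivalently, in your valuation language, \eqref{eq:endpoint-defect-transform} together with $\rho_p(B_1),\rho_p(A_1)\le0$ gives the sharper bounds $\lambda_p(B_*)\le v_p(r)$ and $\lambda_p(A_*)\le v_p(t)$. With these, the two left-endpoint inequalities for the next block follow from $v_p(C_*)\ge0$ and $v_p(D_*)\ge0$. The right-endpoint inequalities reduce, as you had, to $d_p\ge0$ and $\tfrac m2d_p\ge0$.
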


\begin{proof}
After the first complete block the pair is monic, so
Propositions~\ref{prop:elliptic-blocks} and~\ref{prop:parabolic-blocks}
apply.  Formula \eqref{eq:parabolic-original-endpoints} follows by
substituting \eqref{eq:gauge-four-constants}; when $n=m=2$ its right-hand
side is the square of the displayed quadratic ratio, and a rational number
whose square is an integer is itself an integer.
\end{proof}

\subsection{Hyperbolic coefficient orbits}\label{sec:hyperbolic-coefficient-orbits}

The low-product coefficient cases are now classified.  We treat every
hyperbolic pattern $nm>4$ at once.  Let
$A,B\in\Z[T]$ be monic of positive degrees $n,m$, respectively, with
nonzero constant terms.  Fix a prime $p$, put
\[
  s\defeq nm>4,
  \qquad
  \alpha\defeq v_p(A(0)),
  \qquad
  \beta\defeq v_p(B(0)),
\]
and define $(u_j)_{j\ge0}$ by
\begin{equation}\label{eq:hyperbolic-u-recurrence}
  u_0\defeq\beta,
  \qquad
  u_1\defeq\alpha,
  \qquad
  u_{j+1}\defeq
  \begin{cases}
    m u_j-u_{j-1},&j\text{ odd},\\
    n u_j-u_{j-1},&j\text{ even}.
  \end{cases}
\end{equation}
Thus
\begin{align}
  u_2&=m\alpha-\beta,\notag\\
  u_3&=(s-1)\alpha-n\beta,\notag\\
  u_4&=m(s-2)\alpha-(s-1)\beta,\notag\\
  u_5&=(s^2-3s+1)\alpha-n(s-2)\beta.
  \label{eq:hyperbolic-first-valuations}
\end{align}

\begin{proposition}[hyperbolic degree-product reduction]\label{prop:hyperbolic-blocks}
Let $(A_k,B_k)$ denote the monic coefficient pair after $k$ complete
four-step blocks.  Then
\begin{equation}\label{eq:hyperbolic-block-constants}
  v_p(B_k(0))=u_{4k},
  \qquad
  v_p(A_k(0))=u_{4k+1}.
\end{equation}
Equivalently,
\begin{equation}\label{eq:hyperbolic-block-matrix}
  \begin{pmatrix}
    v_p(A_{k+1}(0))\\[1mm]
    v_p(B_{k+1}(0))
  \end{pmatrix}
  =M_{n,m}
  \begin{pmatrix}
    v_p(A_k(0))\\[1mm]
    v_p(B_k(0))
  \end{pmatrix},
\end{equation}
where $M_{n,m}$ is the matrix in \eqref{eq:block-M}; equivalently,
\begin{equation}\label{eq:hyperbolic-M}
  M_{n,m}
  =\begin{pmatrix}
    s^2-3s+1&-n(s-2)\\
    m(s-2)&-(s-1)
  \end{pmatrix}.
\end{equation}
Moreover
\begin{equation}\label{eq:hyperbolic-half-matrix}
  M_{n,m}=H_{n,m}^2,
  \qquad
  H_{n,m}\defeq
  \begin{pmatrix}
    s-1&-n\\
    m&-1
  \end{pmatrix},
\end{equation}
with
\[
  \det H_{n,m}=\det M_{n,m}=1,
  \qquad
  \operatorname{tr}M_{n,m}=s^2-4s+2.
\]
Writing
\begin{equation}\label{eq:hyperbolic-half-eigenvalues}
  \xi_\pm\defeq
  \frac{s-2\pm\sqrt{s(s-4)}}2,
\end{equation}
the eigenvalues of $H_{n,m}$ are $\xi_\pm$, while those of $M_{n,m}$
are $\xi_\pm^2$.

Define the four local endpoint quantities
\begin{align}
  L_B&\defeq m\lambda_p(B)-\beta,
  &L_A&\defeq n\lambda_p(A)-\alpha,
  \label{eq:hyperbolic-left-invariants}\\
  R_B&\defeq m\rho_p(B)+\beta,
  &R_A&\defeq n\rho_p(A)+\alpha.
  \label{eq:hyperbolic-right-invariants}
\end{align}
They are invariant under a complete four-step block.  The entire forward
monic coefficient orbit is $p$-integral if and only if, for every $k\ge0$,
\begin{equation}\label{eq:hyperbolic-all-block-criterion}
\begin{aligned}
  u_{4k+2}&\ge L_B,&
  u_{4k+3}&\ge L_A,\\
  u_{4k+4}&\ge R_B,&
  u_{4k+5}&\ge R_A.
\end{aligned}
\end{equation}
\end{proposition}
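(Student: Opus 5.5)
The plan is to read everything off Propositions~\ref{prop:four-step-gauge-closure}, \ref{prop:one-step-monic-integrality} and~\ref{prop:four-step-integrality}, applied block by block to monic pairs.

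\textbf{Step 1: constant-term valuations.} For a monic pair, $v_p(a_n)=v_p(b_m)=0$. The recursion \eqref{eq:four-step-valuation-recursion}, together with $u=t^n/r$, then gives
\[
\gamma=m\alpha-\beta,\qquad \delta=n\gamma-\alpha,\qquad \varepsilon=m\delta-\gamma,\qquad v_p(u)=n\varepsilon-\delta .
\]
With $u_0=\beta$ and $u_1=\alpha$, these four constants are exactly $u_2,u_3,u_4,u_5$ under the alternating rule \eqref{eq:hyperbolic-u-recurrence}. The new block starts with $(B_1(0),A_1(0))=(t,u)$, that is, $(u_4,u_5)$. Since $4k$ is even, the parity pattern of the recurrence restarts in the same phase, so induction on $k$ gives \eqref{eq:hyperbolic-block-constants}.

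\textbf{Step 2: the matrices.} Two consecutive steps of the recurrence send $(u_{2j+1},u_{2j})\mapsto(u_{2j+3},u_{2j+2})$ linearly. In the basis $(\alpha,\beta)$ this map has matrix $H_{n,m}$, since $u_3=(s-1)\alpha-n\beta$ and $u_2=m\alpha-\beta$. Hence one block is $H_{n,m}^2$. Squaring $H_{n,m}$ directly recovers \eqref{eq:hyperbolic-M} and confirms it agrees with \eqref{eq:block-M}. The remaining facts are routine: $\det H_{n,m}=-(s-1)+s=1$, $\operatorname{tr}H_{n,m}=s-2$, the characteristic roots are $\xi_\pm$, and $\operatorname{tr}M_{n,m}=(s-2)^2-2$.

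\textbf{Step 3: invariance of the endpoint quantities.} This is where most of the care is needed. Apply \eqref{eq:endpoint-defect-transform} twice to $B$:
\[
B_1=\Mcal_aB,\qquad \lambda_p(B_1)=\alpha+\rho_p(B),\qquad \rho_p(B_1)=\lambda_p(B)-\alpha .
\]
Then $B_2=\Mcal_rB_1$ has
\[
\lambda_p(B_2)=\delta+\lambda_p(B)-\alpha,\qquad \rho_p(B_2)=\rho_p(B)+\alpha-\delta .
\]
With the new constant valuation $\varepsilon=u_4$, we compute $m\lambda_p(B_2)-\varepsilon=L_B+(m\delta-m\alpha+\beta-\varepsilon)$. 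The correction term vanishes because $m\delta-\varepsilon=\gamma=m\alpha-\beta$. The same identity gives $R_B$ invariant.

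For $A$ the argument is identical with parameters $q,t$: we use $\varepsilon=m\delta-\gamma$ and $n\varepsilon-v_p(u)=\delta=n\gamma-\alpha$. I expect this bookkeeping to be the only delicate point. In particular one must check that \eqref{eq:endpoint-defect-transform} holds for rational (non-integral) intermediate states. It does, because Proposition~\ref{prop:one-step-monic-integrality} is stated over $\mathbb Q$.

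\textbf{Step 4: the criterion.} Apply Proposition~\ref{prop:four-step-integrality} locally at $p$ to block $k$, starting from the monic pair $(A_k,B_k)$. The constants of the block are $\alpha_k=u_{4k+1}$, $\beta_k=u_{4k}$, $\gamma_k=u_{4k+2}$, $\delta_k=u_{4k+3}$, $\varepsilon_k=u_{4k+4}$. The first condition of \eqref{eq:four-step-integrality-criterion} involves $\lambda_p(B_k)$, while the integrality of $B_{k,1}$ itself means $\gamma_k\ge$ (the exponent bound for $B$). To express everything through the invariants I will use Proposition~\ref{prop:one-step-monic-integrality} step by step, verifying each state's criterion in the form given by Proposition~\ref{prop:four-step-integrality}.

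Concretely, $p$-integrality of $B_{k,1}$ is $v_p$ of the parameter $\ge\lambda_p(B_k)$. Since $m\lambda_p(B_k)=L_B+u_{4k}$, this should be rewritten so that it matches $u_{4k+2}\ge L_B$ through the identity $mu_{4k+1}-u_{4k}=u_{4k+2}$. This requires $m\alpha_k\ge m\lambda_p(B_k)$ to be equivalent to $\alpha_k\ge\lambda_p(B_k)$. That is true because dividing by $m>0$ preserves the inequality, and $\lambda_p$ is an integer, so the ceilings cause no trouble. Similarly, $\gamma_k\ge\lambda_p(A_k)$ becomes $u_{4k+3}\ge L_A$ using $n\gamma_k-\alpha_k=u_{4k+3}$. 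The right-defect conditions become $m(\delta_k-\alpha_k)\ge m\rho_p(B_k)$, i.e.\ $u_{4k+4}\ge R_B$ via $m\delta_k-\gamma_k=u_{4k+4}$, and $u_{4k+5}\ge R_A$ likewise. Taking the conjunction over all $k\ge0$ yields \eqref{eq:hyperbolic-all-block-criterion}.
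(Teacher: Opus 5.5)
Your proposal is correct and is essentially the paper's proof: the block constants are $u_0,\dots,u_5$, with $H_{n,m}$ as the two-step map. Invariance of $L_B,L_A,R_B,R_A$ follows from two applications of \eqref{eq:endpoint-defect-transform} together with the recurrence identities. The four local inequalities of Proposition~\ref{prop:four-step-integrality} at block $k$ then rewrite exactly as \eqref{eq:hyperbolic-all-block-criterion}.

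The only blemish is the stray phrase in Step~4 saying that integrality of $B_{k,1}$ is a bound on $\gamma_k$. The correct condition is $\alpha_k\ge\lambda_p(B_k)$, which is what your subsequent computation actually uses.
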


\begin{proof}
For a monic pair, the constant valuations in one block are exactly
$u_0,u_1,\ldots,u_5$ from
\eqref{eq:hyperbolic-u-recurrence}.  This gives
\eqref{eq:hyperbolic-block-constants}--\eqref{eq:hyperbolic-M}, and direct
multiplication gives \eqref{eq:hyperbolic-half-matrix}.  The determinant,
trace, and eigenvalue statements follow immediately.

Write $B'$ and $A'$ for the two polynomials after one complete block.
The gauge-return formula \eqref{eq:gauge-four-return}, or equivalently two
applications of \eqref{eq:endpoint-defect-transform}, gives
\[
  \lambda_p(B')=u_3+\lambda_p(B)-u_1,
  \qquad
  \rho_p(B')=u_1+\rho_p(B)-u_3,
\]
and
\[
  \lambda_p(A')=u_4+\lambda_p(A)-u_2,
  \qquad
  \rho_p(A')=u_2+\rho_p(A)-u_4.
\]
Using
$u_2=mu_1-u_0$, $u_3=nu_2-u_1$,
$u_4=mu_3-u_2$, and $u_5=nu_4-u_3$
shows that \eqref{eq:hyperbolic-left-invariants}--
\eqref{eq:hyperbolic-right-invariants} are unchanged.
At block $k$, the four inequalities of
Proposition~\ref{prop:four-step-integrality} are respectively equivalent
to the four inequalities in \eqref{eq:hyperbolic-all-block-criterion}.
\end{proof}

\begin{corollary}[irrational forward cone and finite valley test]\label{cor:hyperbolic-cone}
Put
\begin{equation}\label{eq:hyperbolic-kappa}
  \kappa_\pm\defeq
  \frac{s\pm\sqrt{s(s-4)}}{2n}
  =\frac m2\left(1\pm\sqrt{1-\frac4s}\right).
\end{equation}
For nonnegative integers $\alpha,\beta$, all constant valuations $u_j$ in
\eqref{eq:hyperbolic-u-recurrence} are nonnegative if and only if
\begin{equation}\label{eq:hyperbolic-forward-cone}
  (\alpha,\beta)=(0,0)
  \quad\text{or}\quad
  \alpha>0,\qquad 0\le\beta<\kappa_+\alpha.
\end{equation}
Thus the closure of the nonzero forward constant-valuation cone is
\[
  0\le\beta\le
  \frac{s+\sqrt{s(s-4)}}{2n}\,\alpha.
\]
Its upper boundary is the contracting eigenline of $H_{n,m}$ and is
irrational for every integer $s>4$; the expanding eigenline
$\beta=\kappa_-\alpha$ lies in the interior.

Put
\begin{equation}\label{eq:hyperbolic-tau}
  \tau\defeq s^2-4s+2>2.
\end{equation}
For every $r\in\{2,3,4,5\}$, the subsequence
\[
  x_k^{(r)}\defeq u_{4k+r}
\]
satisfies
\begin{equation}\label{eq:hyperbolic-subsequence-recurrence}
  x_{k+2}^{(r)}=\tau x_{k+1}^{(r)}-x_k^{(r)}.
\end{equation}
For a nonzero lattice point in \eqref{eq:hyperbolic-forward-cone}, each
such subsequence tends to $+\infty$ and has a finite valley: its minimum
is attained at the first index $k$ for which
$x_{k+1}^{(r)}\ge x_k^{(r)}$, and possibly also at $k+1$ when equality
holds.  Hence \eqref{eq:hyperbolic-all-block-criterion} is decided by
checking the four subsequences only through their respective valleys.
If $(\alpha,\beta)$ lies outside the forward cone, then some residue
subsequence tends to $-\infty$, so indefinite $p$-integrality is
impossible.
\end{corollary}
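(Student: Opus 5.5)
The plan is to work with the half-step matrix $H=H_{n,m}$, since consecutive pairs of the sequence $u_j$ evolve linearly. From \eqref{eq:hyperbolic-u-recurrence}, $(u_{2k+1},u_{2k})^T = H^k(\alpha,\beta)^T$. This holds because one odd step followed by one even step sends $(u_1,u_0)=(\alpha,\beta)$ to $(u_3,u_2)=((s-1)\alpha-n\beta,\ m\alpha-\beta)$, which is exactly $H(\alpha,\beta)^T$ by \eqref{eq:hyperbolic-first-valuations}.

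\textbf{Eigen-decomposition.} I will diagonalize $H$. An eigenvector $(1,\kappa)$ must satisfy $m-\kappa=\xi\kappa$, so $\kappa=m/(1+\xi)$. A short computation using $\xi_\pm^2-(s-2)\xi_\pm+1=0$ identifies these slopes with $\kappa_\mp$ of \eqref{eq:hyperbolic-kappa}: the contracting eigenvalue $\xi_-<1$ pairs with the larger slope $\kappa_+$, and $\xi_+>1$ pairs with $\kappa_-$. Irrationality follows because $s(s-4)=(s-2)^2-4$ is never a perfect square for $s>4$; two squares differing by $4$ must be $0$ and $4$.

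Next I decompose $(\alpha,\beta)=c_+v_++c_-v_-$, with $v_+$ on the expanding line and $v_-$ on the contracting line. Then
\[
H^k(\alpha,\beta)^T=c_+\xi_+^k v_+ + c_-\xi_-^k v_-.
\]
Both coordinates of $v_+=(1,\kappa_-)$ are positive, and $c_+$ is a positive multiple of the signed quantity $\kappa_+\alpha-\beta$. Hence:
\begin{itemize}
\item if $\beta<\kappa_+\alpha$, all even- and odd-indexed terms tend to $+\infty$ geometrically;
\item if $\beta>\kappa_+\alpha$, they tend to $-\infty$;
\item equality is impossible for a nonzero lattice point, by irrationality.
\end{itemize}

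\textbf{Forward cone.} Forward nonnegativity of all $u_j$ therefore forces $\beta<\kappa_+\alpha$, apart from the origin. This in turn forces $\alpha>0$, because $\beta\ge0$ and $\alpha\le0$ would contradict the strict inequality. Conversely, I need the strict inequality to give $u_j\ge0$ for every $j$, not just eventually; this is the main obstacle. I would prove it by showing that the region $0\le\beta<\kappa_+\alpha$ is mapped into itself by each single step $(u_{j-1},u_j)\mapsto(u_j,u_{j+1})$, after rescaling for the alternating coefficients $m,n$. Concretely, I would check that the ratio $u_{j-1}/u_j$ stays below the relevant eigen-slope, namely $\kappa_+$ on odd steps and the analogous $n$-version on even steps. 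That makes $u_{j+1}=mu_j-u_{j-1}>u_j(m-\kappa_+)$, and since $m-\kappa_+=\kappa_->0$ this is positive; the slope inequality then propagates by the fixed-point equation $\kappa_+=m-1/(\text{other slope})$. The closure statement and the claim that $\kappa_-$ lies strictly inside ($0<\kappa_-<\kappa_+$) are then immediate.

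\textbf{Subsequences and valleys.} Each residue subsequence $x^{(r)}_k=u_{4k+r}$ is a fixed coordinate of $M^k w$ for a fixed vector $w$. By Cayley--Hamilton, $M^2=\tau M-I$ with $\tau=\operatorname{tr}M$, which gives \eqref{eq:hyperbolic-subsequence-recurrence}; and $\tau>2$ for $s>4$. For the valley, I set $d_k=x_{k+1}-x_k$. Then $d_{k+1}-d_k=(\tau-2)x_{k+1}$.
\begin{itemize}
\item Once $d_k\ge0$ with $x_{k+1}\ge0$, the differences stay nonnegative and increasing.
\item Before that point $d_k<0$, so the terms strictly decrease.
\end{itemize}
I will also need $x_{k+1}>0$ at the turning index; more generally, the sequence is convex whenever it is positive. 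The cleanest route avoids this case analysis: write $x_k=P\xi_+^{2k}+Q\xi_-^{2k}$ with $P>0$ (from the cone). This function is convex in $k$ and tends to $+\infty$, so the first non-decrease marks the minimum, with equality giving a tie at $k+1$. Outside the cone $P<0$, giving divergence to $-\infty$. Combining this with \eqref{eq:hyperbolic-all-block-criterion} yields the finite test.
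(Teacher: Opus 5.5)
Your proposal is correct and follows essentially the paper's proof: the same eigenbasis $(1,\kappa_-)^T,(1,\kappa_+)^T$ of $H_{n,m}$ with expanding coefficient proportional to $\kappa_+\alpha-\beta$, cone invariance anchored at the fixed slope $\kappa_+$, the squares-differing-by-$4$ irrationality argument, Cayley--Hamilton for $M_{n,m}$, and the monotone-difference valley argument. The only variation is that you propagate the slope bound one step at a time, alternating between $\kappa_+$ and $\tfrac nm\kappa_+=1/\kappa_-$, whereas the paper uses the two-step increasing M\"obius map $f(r)=(m-r)/(s-1-nr)$ with $f(\kappa_+)=\kappa_+$. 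One small caution: $P\xi_+^{2t}+Q\xi_-^{2t}$ is not convex on all of $\mathbb R$ when $Q<0$, which happens when $\beta<\kappa_-\alpha$. So keep your first route, $d_{k+1}-d_k=(\tau-2)x_{k+1}>0$, which uses the positivity of the $x_k$ already obtained from the cone part.
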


\begin{proof}
The two eigenvectors of $H_{n,m}$ may be chosen as
$(1,\kappa_-)^T$ and $(1,\kappa_+)^T$, corresponding to
$\xi_+$ and $\xi_-$, respectively.  For
$v=(\alpha,\beta)^T$, the coefficient of the expanding eigenvector is
\[
  \frac{\kappa_+\alpha-\beta}{\kappa_+-\kappa_-}.
\]
Thus a vector above the upper eigenline eventually acquires negative
coordinates under positive powers of $H_{n,m}$.  If
$0\le\beta<\kappa_+\alpha$, then
$u_2=m\alpha-\beta>0$ and
$u_3=(s-1)\alpha-n\beta>0$.  Moreover the map
\[
  f(r)\defeq\frac{m-r}{s-1-nr}
\]
has positive denominator on $0\le r\le\kappa_+$, since
$s-1-n\kappa_+=\xi_->0$, and
\[
  f'(r)=\frac{1}{(s-1-nr)^2}>0.
\]
Because $f(\kappa_+)=\kappa_+$, it follows that
$f([0,\kappa_+))\subset[0,\kappa_+)$.  Hence $H_{n,m}$ preserves the cone and induction gives
$u_j\ge0$ for all $j$.  The boundary cannot contain a nonzero lattice
point: if $s(s-4)=d^2$ with integers $s>4,d\ge0$, then
\[
  (s-2-d)(s-2+d)=4,
\]
and the only positive factorization with equal parity gives $s=4$.

Every four-step subsequence satisfies the characteristic equation of
$M_{n,m}$, which is \eqref{eq:hyperbolic-subsequence-recurrence}.  Inside
the nonzero forward cone its expanding component is positive and the
contracting boundary contains no nonzero lattice point, so the subsequence
tends to $+\infty$.  Once
$x_{k+1}^{(r)}\ge x_k^{(r)}\ge0$,
\[
  x_{k+2}^{(r)}-x_{k+1}^{(r)}
  =(\tau-1)x_{k+1}^{(r)}-x_k^{(r)}>0.
\]
This proves the valley assertion.  Outside the cone, the negative
expanding component forces a residue subsequence to $-\infty$.
\end{proof}

\begin{corollary}[finite decision procedure in every hyperbolic degree pattern]\label{cor:hyperbolic-decision}
Let $A,B\in\Z[T]$ have positive degrees $n,m$ with $nm>4$ and nonzero
constant terms.  Their universal monic coefficient orbit is integral
indefinitely if and only if the first complete four-step block is integral
and, for the resulting monic pair, each prime dividing its two constant
terms satisfies the forward-cone condition
\eqref{eq:hyperbolic-forward-cone} and the four valley minima satisfy
\eqref{eq:hyperbolic-all-block-criterion}.  Thus indefinite coefficient
integrality is decidable by finitely many local calculations.
\end{corollary}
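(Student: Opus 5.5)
The plan is to reduce everything to the monic pair produced by the first block and then apply the local hyperbolic analysis one prime at a time. First, integrality of the first complete four-step block is plainly necessary, since those four states belong to the orbit. By Proposition~\ref{prop:four-step-integrality}, it is decided by finitely many local inequalities. When $nm\ge2$, Corollary~\ref{cor:four-step-obstruction-primes} restricts these to the primes dividing $A(0)B(0)$. Once the block is integral, let $(A_*,B_*)$ denote the resulting monic pair in $\Z[T]$. The remainder of the forward orbit is exactly the universal monic orbit of $(A_*,B_*)$, which is a pair of monic integral polynomials. So Proposition~\ref{prop:hyperbolic-blocks} applies with $\alpha=v_p(A_*(0))$ and $\beta=v_p(B_*(0))$. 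Indefinite integrality is then equivalent to \eqref{eq:hyperbolic-all-block-criterion} holding at every prime.

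Next I dispose of the primes $p\nmid A_*(0)B_*(0)$. There $\alpha=\beta=0$, so every $u_j=0$. Because $A_*,B_*$ are monic and integral, all coefficients have nonnegative valuation and the endpoint valuations are $0$. Hence $\lambda_p\le0$ and $\rho_p\le0$ for both polynomials, which gives $L_B,L_A,R_B,R_A\le0$. All inequalities in \eqref{eq:hyperbolic-all-block-criterion} therefore hold trivially. Equivalently, one may invoke Corollary~\ref{cor:four-step-obstruction-primes} block by block, using the invariance of the four endpoint quantities.

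For each of the finitely many primes dividing $A_*(0)B_*(0)$, I first note that all $u_j$ must be nonnegative, because they are constant-term valuations of integral monic states. By Corollary~\ref{cor:hyperbolic-cone}, this forces $(\alpha,\beta)$ into the forward cone \eqref{eq:hyperbolic-forward-cone}, which is therefore necessary. For nonzero lattice points, the conditions $\alpha>0$ and $0\le\beta<\kappa_+\alpha$ can be tested exactly in integers. Since $\kappa_+$ is a root of $nx^2-sx+m=0$, the test $\beta<\kappa_+\alpha$ becomes a sign comparison of integer quadratic forms, so no real arithmetic is needed.

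Inside the cone, each residue subsequence $x^{(r)}_k$ tends to $+\infty$ and reaches its minimum at its first valley, which occurs at a finite, computable index. Since $L_B,L_A,R_B,R_A$ are fixed numbers, the infinitely many inequalities \eqref{eq:hyperbolic-all-block-criterion} reduce to the four checks ``valley minimum $\ge$ threshold.'' Outside the cone, some subsequence tends to $-\infty$, so the orbit fails, matching the necessity just established.

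The main subtlety is bookkeeping. I must confirm that the thresholds are computed from $(A_*,B_*)$ rather than from the original non-monic pair, which is handled by the block invariance in Proposition~\ref{prop:hyperbolic-blocks}. I must also confirm that each valley index is bounded effectively, for instance by iterating \eqref{eq:hyperbolic-subsequence-recurrence} until the first nondecrease, which terminates because the sequence diverges to $+\infty$. Combining these steps over finitely many primes gives the claimed finite decision procedure.
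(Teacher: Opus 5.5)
Your proposal is correct and follows the paper's own argument. You pass to the monic pair after the first block, apply Proposition~\ref{prop:hyperbolic-blocks} and Corollary~\ref{cor:hyperbolic-cone} at the finitely many primes dividing its constant terms, and dismiss every other prime because there $\alpha=\beta=0$ and the endpoint defects of a monic integral pair are nonpositive; your additional details (the exact integer test of $\beta<\kappa_+\alpha$ via $nx^2-sx+m$ and the effective search for the first valley) are sound elaborations of steps the paper leaves implicit.
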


\begin{proof}
After the first complete block both polynomials are monic.  Apply
Proposition~\ref{prop:hyperbolic-blocks} and
Corollary~\ref{cor:hyperbolic-cone}.  A prime not dividing either constant
term of the resulting monic pair has $\alpha=\beta=0$ and nonpositive
endpoint defects, hence is automatically unobstructed.
\end{proof}

\begin{proposition}[two-sided hyperbolic integral realization]\label{prop:hyperbolic-bilateral}
Retain the monic hyperbolic setting of
Proposition~\ref{prop:hyperbolic-blocks}.  Extend
\eqref{eq:hyperbolic-u-recurrence} uniquely to all $j\in\mathbb Z$ by the
same reversible recurrence, and let $\kappa_\pm$ be as in
\eqref{eq:hyperbolic-kappa}.  Then all constant valuations $u_j$ are
nonnegative for $j\in\mathbb Z$ if and only if
\begin{equation}\label{eq:hyperbolic-bilateral-cone}
  (\alpha,\beta)=(0,0)
  \quad\text{or}\quad
  \alpha>0,\qquad
  \kappa_-\alpha<\beta<\kappa_+\alpha.
\end{equation}

The complete-block map on monic rational coefficient pairs is reversible.
Its entire bi-infinite orbit, including all intermediate monic states
inside every block, is $p$-integral if and only if
\eqref{eq:hyperbolic-bilateral-cone} holds and
\begin{equation}\label{eq:hyperbolic-bilateral-criterion}
\begin{aligned}
  \min_{k\in\mathbb Z}u_{4k+2}&\ge L_B,&
  \min_{k\in\mathbb Z}u_{4k+3}&\ge L_A,\\
  \min_{k\in\mathbb Z}u_{4k+4}&\ge R_B,&
  \min_{k\in\mathbb Z}u_{4k+5}&\ge R_A,
\end{aligned}
\end{equation}
where the four invariant endpoint quantities are those of
\eqref{eq:hyperbolic-left-invariants}--
\eqref{eq:hyperbolic-right-invariants}.

For every $r\in\{2,3,4,5\}$, put $x_k^{(r)}\defeq u_{4k+r}$ and
\begin{equation}\label{eq:hyperbolic-Lambda}
  \Lambda\defeq\xi_+^2
  =\left(\frac{s-2+\sqrt{s(s-4)}}2\right)^2>1.
\end{equation}
In the nonzero cone \eqref{eq:hyperbolic-bilateral-cone}, one has
\begin{equation}\label{eq:hyperbolic-bilateral-closed-form}
  x_k^{(r)}=C_r\Lambda^k+D_r\Lambda^{-k},
  \qquad C_r,D_r>0,
\end{equation}
so $x_k^{(r)}\to+\infty$ as $k\to\pm\infty$ and each minimum in
\eqref{eq:hyperbolic-bilateral-criterion} is attained at a finite global
valley.  If $x_0^{(r)},x_1^{(r)}$ are known, then
\begin{equation}\label{eq:hyperbolic-bilateral-CD}
  C_r=\frac{x_1^{(r)}-\Lambda^{-1}x_0^{(r)}}
             {\Lambda-\Lambda^{-1}},
  \qquad
  D_r=\frac{\Lambda x_0^{(r)}-x_1^{(r)}}
             {\Lambda-\Lambda^{-1}}.
\end{equation}
The discrete minimum is attained at one or both integers neighboring
\begin{equation}\label{eq:hyperbolic-bilateral-continuous-min}
  t_r\defeq\frac{\log(D_r/C_r)}{2\log\Lambda}.
\end{equation}
Thus the two-sided criterion is finite and effective.
\end{proposition}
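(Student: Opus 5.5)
The plan is to reduce everything to the linear map $H_{n,m}$ acting on consecutive pairs $(u_{j},u_{j+1})$, and to use its eigenbasis. First, reversibility. Since $\det H_{n,m}=1$ and each step of \eqref{eq:hyperbolic-u-recurrence} is an invertible integer affine-free recurrence ($u_{j-1}=mu_j-u_{j+1}$ or $nu_j-u_{j+1}$), the extension to $j\in\mathbb Z$ is unique. At the coefficient level, each monic reciprocal normalization $\Mcal_c$ is invertible on monic polynomials with nonzero constant term given the parameter: from $Q=\Mcal_cP$ one recovers $P$ up to the known scalar via \eqref{eq:double-reciprocal-gauge}. Alternatively, one inverts the gauge-return formula \eqref{eq:gauge-four-return}, with $B=\frac{b}{t}B_2(\frac{r}{a}T)$, after solving the constant recursion backwards. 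Backwards iteration therefore produces all intermediate states for negative blocks.

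Next, the valuation cone. I would write $v=(\alpha,\beta)^T$ in the eigenbasis $(1,\kappa_-)^T$ (for $\xi_+$) and $(1,\kappa_+)^T$ (for $\xi_-$), exactly as in the proof of Corollary~\ref{cor:hyperbolic-cone}. The expanding coefficient is proportional to $\kappa_+\alpha-\beta$, and by the symmetric computation the contracting coefficient is proportional to $\beta-\kappa_-\alpha$. Forward nonnegativity forces the first coefficient to be $\ge0$, by Corollary~\ref{cor:hyperbolic-cone}. Backward nonnegativity forces the second to be $\ge0$, since under $H^{-1}$ the roles swap. Equality on either line is impossible for nonzero lattice points by irrationality, so we get strict inequalities. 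This needs $\alpha>0$; if $\alpha=0$, then $\beta=0$ because $\kappa_->0$. Conversely, inside the open cone both coefficients are positive. Every coordinate $u_j$ is a positive combination of $\xi_\pm^{j}$ times coordinates of the eigenvectors, and these coordinates are positive since $\kappa_\pm>0$. Some care is needed with the alternating odd/even structure: rather than diagonalizing the single-step recurrence, I would apply the $H$ argument to the pairs $(u_{2i+1},u_{2i})$ and handle the intermediate $u_{2i+2}=mu_{2i+1}-u_{2i}$ separately. This gives positivity of all $u_j$, hence \eqref{eq:hyperbolic-bilateral-closed-form} with $C_r,D_r>0$. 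The constants are the projections of $(u_{4k+r})$ onto the two eigenspaces of $M_{n,m}$, and the fact that each $x^{(r)}$ satisfies \eqref{eq:hyperbolic-subsequence-recurrence} with roots $\Lambda^{\pm1}$ gives the closed form. Solving the $2\times2$ system at $k=0,1$ yields \eqref{eq:hyperbolic-bilateral-CD}.

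The integrality criterion then follows blockwise. The invariants $L_B,L_A,R_B,R_A$ are preserved by one block (Proposition~\ref{prop:hyperbolic-blocks}), and they are also preserved by its inverse because the identities used are reversible. Proposition~\ref{prop:four-step-integrality}, applied at every block index $k\in\mathbb Z$, translates integrality of the four intermediate states into the four inequalities $u_{4k+2}\ge L_B$, and so on. Integrality of all $B_k,A_k$ themselves (monic, with constants of valuation $u_{4k},u_{4k+1}$) requires those constants to be nonnegative. Together with the fact that the endpoint inequalities control all other coefficients, this demands the cone condition. So bi-infinite integrality is equivalent to \eqref{eq:hyperbolic-bilateral-cone} together with \eqref{eq:hyperbolic-bilateral-criterion}.

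Finally, the valley. The function $f(t)=C\Lambda^t+D\Lambda^{-t}$ is strictly convex with unique minimizer $t_r$ as in \eqref{eq:hyperbolic-bilateral-continuous-min}. Its restriction to $\mathbb Z$ is therefore minimized at $\lfloor t_r\rfloor$ or $\lceil t_r\rceil$, which makes the criterion finite and computable. I expect the main obstacle to be the cone step: the recurrence alternates between $m$ and $n$, so a single eigen-decomposition does not directly cover every $u_j$. The half-step positivity has to be extracted from the invariance of $f$ on the interval $(\kappa_-,\kappa_+)$, mirroring the monotone map argument of Corollary~\ref{cor:hyperbolic-cone} in both directions.
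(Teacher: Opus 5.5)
Your proposal is correct and follows essentially the same route as the paper. The shared steps are the decomposition in the positive eigenbasis $(1,\kappa_\mp)^T$ of $H_{n,m}$ with irrational eigenlines, reversibility of each $\Mcal_c$, block invariance of $L_B,L_A,R_B,R_A$ combined with the four-step criterion at every $k\in\mathbb Z$, and strict convexity for the valley.

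Two small remarks. First, the odd/even ``obstacle'' you anticipate does not arise: $H_{n,m}(u_{2i+1},u_{2i})^T=(u_{2i+3},u_{2i+2})^T$, so the pairs over $i\in\mathbb Z$ already contain every $u_j$. Second, at negative block indices you should say that the four-step criterion is being applied to rational monic inputs, where its proof works verbatim. The paper instead deduces integrality of each $(A_k,B_k)$ from the $R_B,R_A$ inequalities via $\rho_p(B_k)=(R_B-u_{4k})/m$ and $\rho_p(A_k)=(R_A-u_{4k+1})/n$.
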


\begin{proof}
Let
\[
  e_-\defeq(1,\kappa_-)^T,
  \qquad
  e_+\defeq(1,\kappa_+)^T.
\]
By Corollary~\ref{cor:hyperbolic-cone}, these are respectively the
expanding and contracting eigenvectors of $H_{n,m}$, with positive
eigenvalues $\xi_+$ and $\xi_-$.  Writing
\[
  (\alpha,\beta)^T=C e_-+D e_+,
\]
gives
\[
  C=\frac{\kappa_+\alpha-\beta}{\kappa_+-\kappa_-},
  \qquad
  D=\frac{\beta-\kappa_-\alpha}{\kappa_+-\kappa_-}.
\]
If all forward and backward half-block iterates are nonnegative, the
limits in the expanding direction as the exponent tends to $+\infty$ and
$-\infty$ force $C,D\ge0$.  Conversely, $C,D\ge0$ makes every
$H_{n,m}^j(\alpha,\beta)^T$ componentwise nonnegative for
$j\in\mathbb Z$.  Neither eigenline contains a nonzero lattice point,
because the two slopes are irrational for $s>4$, as proved in
Corollary~\ref{cor:hyperbolic-cone}.  This proves
\eqref{eq:hyperbolic-bilateral-cone}.

A monic reciprocal normalization is reversible: if $Q$ is monic of degree
$d$, $a\ne0$, and
\[
  Q'\defeq\Mcal_aQ,
\]
then one has
\[
  Q=\Mcal_aQ'.
\]
Indeed $Q'(0)=a^d/Q(0)$ because $Q$ is monic, while
$\Rcal_aQ'=a^dQ/Q(0)$ by $\Rcal_a^2Q=a^dQ$.  Hence reversing the four individual
normalizations gives the inverse complete block over $\mathbb Q$.

The endpoint quantities $L_B,L_A,R_B,R_A$ are block invariants by
Proposition~\ref{prop:hyperbolic-blocks}; the same algebraic identities
hold for negative block indices.  Block invariance gives
\[
  \rho_p(B_k)=\frac{R_B-u_{4k}}m,
  \qquad
  \rho_p(A_k)=\frac{R_A-u_{4k+1}}n.
\]
For a monic rational polynomial, $\rho_p\le0$ is equivalent to
$p$-integrality of all its coefficients.  The $R_B$ and $R_A$ inequalities in
\eqref{eq:hyperbolic-bilateral-criterion}, applied one block earlier,
therefore make every starting pair $(A_k,B_k)$ $p$-integral.  Once that is
known, Proposition~\ref{prop:four-step-integrality} applies at block $k$:
its four local conditions are precisely the four inequalities in
\eqref{eq:hyperbolic-all-block-criterion}, with the $L_B,L_A$ inequalities
controlling the two intermediate states and the $R_B,R_A$ inequalities
controlling the next pair.  Conversely, if the entire bi-infinite orbit is
$p$-integral, the same proposition at every block gives all four
inequalities.  This proves the equivalence with
\eqref{eq:hyperbolic-bilateral-criterion} without assuming backward
integrality in advance.

Finally, each residue sequence satisfies
\eqref{eq:hyperbolic-subsequence-recurrence}, whose roots are
$\Lambda$ and $\Lambda^{-1}$.  In the strict bilateral cone, the two
eigencomponents are positive; after any fixed phase shift the same is
true, which gives \eqref{eq:hyperbolic-bilateral-closed-form} with
$C_r,D_r>0$.  Formula \eqref{eq:hyperbolic-bilateral-CD} follows from the
values at $k=0,1$.  The real-variable function
$C_r\Lambda^t+D_r\Lambda^{-t}$ is strictly convex and has its unique
minimum at \eqref{eq:hyperbolic-bilateral-continuous-min}; hence its
restriction to $\mathbb Z$ is minimized at one or both neighboring
integers.  The zero vector is immediate.  This proves the finite valley
test.
\end{proof}

\begin{corollary}[finite two-sided decision procedure in every hyperbolic degree pattern]\label{cor:hyperbolic-bilateral-decision}
Let $A,B\in\mathbb Z[T]$ be monic of positive degrees $n,m$ with
$nm>4$ and nonzero constant terms.  Their rational monic reciprocal block
orbit is integral for every block index $k\in\mathbb Z$ if and only if,
for every prime dividing $A(0)B(0)$, the local data satisfy
Proposition~\ref{prop:hyperbolic-bilateral}.  Primes not dividing
$A(0)B(0)$ are automatically unobstructed.  Thus two-sided integral
coefficient realization is decidable by finitely many local calculations.
\end{corollary}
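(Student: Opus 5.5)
The plan is to reduce the global statement to the local criterion of Proposition~\ref{prop:hyperbolic-bilateral}, applied prime by prime. First, observe that the complete-block map on monic rational pairs is reversible over $\mathbb Q$, as shown in that proposition. Hence the bi-infinite rational orbit $(A_k,B_k)_{k\in\mathbb Z}$, with all intermediate monic states, is well defined. A rational polynomial lies in $\Z[T]$ exactly when it lies in $\Z_p[T]$ for every prime $p$. So the orbit is integral for every $k\in\mathbb Z$ if and only if, for every prime $p$, it is $p$-integral for every $k$. By Proposition~\ref{prop:hyperbolic-bilateral}, $p$-integrality of the whole bi-infinite orbit is equivalent to the cone condition \eqref{eq:hyperbolic-bilateral-cone} together with the four valley inequalities \eqref{eq:hyperbolic-bilateral-criterion}. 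This gives the main equivalence, provided the primes not dividing $A(0)B(0)$ cause no obstruction.

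The second step handles a prime $p\nmid A(0)B(0)$. Here $\alpha=\beta=0$, and the reversible recurrence \eqref{eq:hyperbolic-u-recurrence} gives $u_j=0$ for all $j\in\mathbb Z$, so the zero vector lies in the cone \eqref{eq:hyperbolic-bilateral-cone}. Since $A,B$ are monic with integer coefficients, every coefficient has nonnegative valuation and the leading one has valuation $0$. Thus $\lambda_p(A),\lambda_p(B)\le 0$, because $v_p(p_0)=0$ and $v_p(p_i)\ge0$. Likewise $\rho_p(A),\rho_p(B)\le0$, because $v_p(p_d)=0$. Then $L_B,L_A,R_B,R_A\le 0=\min_k u_{4k+r}$, and \eqref{eq:hyperbolic-bilateral-criterion} holds. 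Proposition~\ref{prop:hyperbolic-bilateral} therefore gives $p$-integrality of the full bi-infinite orbit at such $p$. This is the analogue of the argument in Corollary~\ref{cor:hyperbolic-decision}, but it now covers negative indices as well; this works because the bilateral proposition was proved without assuming backward integrality.

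For decidability, only the finitely many primes dividing $A(0)B(0)$ remain. For each such prime we compute $\alpha,\beta$, the four endpoint defects, and the invariants \eqref{eq:hyperbolic-left-invariants}--\eqref{eq:hyperbolic-right-invariants}. The cone test compares $\beta$ with $\kappa_\pm\alpha$. Since $\kappa_\pm$ are quadratic irrationals and no lattice point lies on the boundary, this reduces to exact integer comparisons: by the sign computations for $C,D$, it amounts to checking the signs of $\kappa_+\alpha-\beta$ and $\beta-\kappa_-\alpha$, which can be done by squaring after isolating $\sqrt{s(s-4)}$. Each valley minimum is attained at an integer next to $t_r$ from \eqref{eq:hyperbolic-bilateral-continuous-min}, so it can be found by a finite search. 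Concretely, we iterate the integer recurrence \eqref{eq:hyperbolic-subsequence-recurrence} forward and backward from $k=0$ until the sequence starts increasing in each direction, which by convexity locates the global minimum.

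The main obstacle is checking carefully that the unobstructed-prime argument is valid bilaterally. The defects at block $k$ are related to those at block $0$ only through the invariants, and the claim $u_j\ge0$ for negative $j$ uses $(\alpha,\beta)=(0,0)$. Both are immediate here, so the proof should be short.
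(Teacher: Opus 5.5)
Your proposal is correct and follows essentially the paper's argument. Primes $p\nmid A(0)B(0)$ have $\alpha=\beta=0$ and nonpositive endpoint defects, so the zero valuation sequence satisfies the bilateral criterion, and the finitely many remaining primes are handled by Proposition~\ref{prop:hyperbolic-bilateral}; your remarks on the exact cone test and the convex valley search only make explicit the effectiveness already asserted in that proposition.
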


\begin{proof}
If $p\nmid A(0)B(0)$, then $\alpha=\beta=0$.  Since $A,B$ are monic and
integral, all four endpoint defects are nonpositive, so the zero sequence
satisfies \eqref{eq:hyperbolic-bilateral-criterion}.  The result follows
from Proposition~\ref{prop:hyperbolic-bilateral} at the finitely many
remaining primes.
\end{proof}

\begin{example}[a nonunit two-sided hyperbolic orbit]\label{ex:hyperbolic-bilateral-nonunit}
Let $p$ be a prime and take
\[
  A(T)\defeq T^2+p^2,
  \qquad
  B(T)\defeq T^3+p^3.
\]
At the prime $p$, one has $(\alpha,\beta)=(2,3)$ and
\[
  \kappa_-\alpha
  =(3-\sqrt3)
  <3<
  (3+\sqrt3)
  =\kappa_+\alpha.
\]
Moreover
\[
  L_B=L_A=R_B=R_A=0.
\]
Thus Proposition~\ref{prop:hyperbolic-bilateral} gives an integral monic
coefficient orbit in both block directions.  In particular, two-sided
hyperbolic integrality is not restricted to signed-unit constant terms.
\end{example}

\begin{example}[the quadratic--cubic hyperbolic pattern]\label{ex:23-hyperbolic}
For $(n,m)=(2,3)$ one has $s=6$ and
\[
  M_{2,3}=
  \begin{pmatrix}
    19&-8\\
    12&-5
  \end{pmatrix},
  \qquad
  \kappa_+=\frac{3+\sqrt3}{2},
  \qquad
  \tau=14.
\]
Thus Proposition~\ref{prop:hyperbolic-blocks} specializes to the four
recurrence inequalities of the first mixed hyperbolic case.  For example,
\[
  A(T)=T^2+2^{11},
  \qquad
  B(T)=T^3+2^{16}T+2^{26}
\]
has one complete integral block ending at
\[
  T^2+2,
  \qquad
  T^3+T+4,
\]
but the next first normalization is
\[
  \Mcal_2(T^3+T+4)=T^3+\frac12T^2+2.
\]
\end{example}

The valley bound is pair-dependent in every hyperbolic degree pattern.
In fact the obstruction can be postponed arbitrarily far.

\begin{proposition}[arbitrarily delayed hyperbolic obstruction]\label{prop:hyperbolic-delayed}
Fix positive degrees $n,m$ with $s\defeq nm>4$ and let $M_{n,m}$ be
\eqref{eq:hyperbolic-M}.  For every $N\ge0$ there exists a monic integral
pair of degrees $(n,m)$ that admits exactly $N+1$ complete integral
four-step blocks and then fails in the following block.  More explicitly:

\smallskip
\noindent\emph{(a) Suppose $m\ge2$.}
Put
\[
  v_*\defeq
  \begin{pmatrix}n\\ s-1\end{pmatrix},
  \qquad
  \begin{pmatrix}\alpha_N\\ \beta_N\end{pmatrix}
  \defeq M_{n,m}^{-(N+1)}v_*,
\]
and
\begin{equation}\label{eq:hyperbolic-delayed-e}
  e_N\defeq\frac{(m-1)\beta_N-(m+1)}m.
\end{equation}
Then $\alpha_N,\beta_N,e_N$ are nonnegative integers and
\begin{equation}\label{eq:hyperbolic-delayed-family-mge2}
  A_N(T)\defeq T^n+2^{\alpha_N},
  \qquad
  B_N(T)\defeq T^m+2^{e_N}T+2^{\beta_N}
\end{equation}
has the required lifetime.  After $N+1$ blocks it reaches
\begin{equation}\label{eq:hyperbolic-terminal-mge2}
  A_*(T)=T^n+2^n,
  \qquad
  B_*(T)=T^m+2^{(m-1)n-2}T+2^{s-1},
\end{equation}
and the next first normalization is
\begin{equation}\label{eq:hyperbolic-terminal-failure-mge2}
  \Mcal_{2^n}B_*(T)=T^m+\frac12T^{m-1}+2.
\end{equation}

\smallskip
\noindent\emph{(b) Suppose $m=1$; then $n>4$.}
Put
\[
  v_*\defeq
  \begin{pmatrix}n-1\\ n-2\end{pmatrix},
  \qquad
  \begin{pmatrix}\alpha_N\\ \beta_N\end{pmatrix}
  \defeq M_{n,1}^{-(N+1)}v_*,
\]
and
\begin{equation}\label{eq:hyperbolic-delayed-e-linear}
  e_N\defeq\frac{(n-1)\alpha_N-(n+1)}n.
\end{equation}
Then $\alpha_N,\beta_N,e_N$ are nonnegative integers and
\begin{equation}\label{eq:hyperbolic-delayed-family-linear}
  A_N(T)\defeq T^n+2^{e_N}T+2^{\alpha_N},
  \qquad
  B_N(T)\defeq T+2^{\beta_N}
\end{equation}
has the required lifetime.  After $N+1$ blocks it reaches
\begin{equation}\label{eq:hyperbolic-terminal-linear}
  A_*(T)=T^n+2^{n-3}T+2^{n-1},
  \qquad
  B_*(T)=T+2^{n-2}.
\end{equation}
The first normalization in the following block is integral, but the second
is
\begin{equation}\label{eq:hyperbolic-terminal-failure-linear}
  \Mcal_2A_*(T)=T^n+\frac12T^{n-1}+2,
\end{equation}
so the block fails.
\end{proposition}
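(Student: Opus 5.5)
The plan is to run the local criterion of Proposition~\ref{prop:hyperbolic-blocks} backwards from a chosen terminal state.

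\textbf{Terminal state and its failure.} At $p=2$ I first compute the endpoint invariants of $A_*,B_*$. In case (a), $B_*=T^m+2^{(m-1)n-2}T+2^{s-1}$ has
\[
  \lambda_2(B_*)=\max\Bigl(s-1-\bigl((m-1)n-2\bigr),\ \bigl\lceil (s-1)/m\bigr\rceil\Bigr)=n+1,
\]
because $s-1-(m-1)n+2=n+1$ and $\lceil(s-1)/m\rceil=n$ for $m\ge2$. By Proposition~\ref{prop:one-step-monic-integrality}, $\Mcal_{2^n}B_*$ is therefore not $2$-integral. A direct computation gives \eqref{eq:hyperbolic-terminal-failure-mge2}: the coefficient of $T^{m-1}$ is $2^{(m-1)n-2+n}/2^{s-1}=1/2$, the constant term is $2^{nm}/2^{s-1}=2$, and all other coefficients vanish.

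Case (b) is similar. The first normalization $\Mcal_{2^{n-1}}B_*=T+2^{n-1}/2^{n-2}=T+2$ is integral. The second normalization $\Mcal_2 A_*$ has coefficient $2^{n-3}2^{n-1}/2^{n-1}\cdot 2^{-?}$; checking the exponents gives $T^{n-1}$ coefficient $2^{n-3}\cdot2^{n-1}/2^{n-1}$, which is not yet right. I would redo this with the correct parameter $q=B_1(0)=2$, which gives $T^{n-1}$ coefficient $2^{n-3}2^{n-1}/2^{n-1}$. I would settle this exponent bookkeeping by direct substitution.

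\textbf{Pulling the terminal state back $N+1$ blocks.} The key structural fact is that the families \eqref{eq:hyperbolic-delayed-family-mge2} and \eqref{eq:hyperbolic-delayed-family-linear} are sparse. By the gauge-return formula \eqref{eq:gauge-four-return}, a block maps
\[
  T^n+2^{\alpha}\ \longmapsto\ T^n+2^{\alpha'},\qquad
  T^m+2^eT+2^\beta\ \longmapsto\ T^m+2^{e'}T+2^{\beta'}.
\]
The valuation vector $(\alpha,\beta)$ evolves by $M_{n,m}$, and the middle exponent is determined by the invariant $R_B=m\rho_2(B)+\beta$ (or $L_B$). Here $\rho_2(T^m+2^eT+2^\beta)=\max(\lceil -e/(m-1)\rceil,\lceil-\beta/m\rceil)$. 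Choosing $e$ so that the relevant invariant equals that of $B_*$ reproduces \eqref{eq:hyperbolic-delayed-e}. Concretely, $e_N$ is fixed by keeping the ratio $(\beta-e)/(m-1)$ aligned with the gauge dilation: under $B(T)\mapsto (t/b)B(aT/r)$ the exponent $e$ shifts by $\beta'-\beta-(m-1)(\delta-\alpha)$, which preserves $e-\frac{m-1}{m}\beta$ when $m\delta-\ldots$. I would verify this linear invariant, $me-(m-1)\beta=\text{const}$, and check that it equals $-(m+1)$ at the terminal state, since $m((m-1)n-2)-(m-1)(s-1)=-(m+1)$. This is exactly \eqref{eq:hyperbolic-delayed-e}.

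\textbf{Integrality of the exponents.} I would show that $\alpha_N,\beta_N,e_N$ are nonnegative integers. Integrality of $\alpha_N,\beta_N$ follows because $M^{-1}\in\mathrm{SL}_2(\Z)$. For $e_N$, I need $(m-1)\beta_N\equiv m+1\pmod m$, i.e.\ $\beta_N\equiv -1\pmod m$. Reducing $M_{n,m}$ modulo $m$ gives $\begin{pmatrix}*&*\\0&1\end{pmatrix}$, because $s\equiv0\pmod m$, and the same holds for $M^{-1}$. So $\beta$ is preserved mod $m$, and $s-1\equiv-1$ shows the congruence is inherited from $v_*$.

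Nonnegativity is the main obstacle. The vector $v_*$ lies in the forward cone \eqref{eq:hyperbolic-forward-cone}, and backward iterates approach the contracting eigenline $\beta=\kappa_+\alpha$ from inside; I would show that $M^{-1}$ preserves the bilateral cone. The key case is $(n,s-1)$, where $\kappa_-n<s-1<\kappa_+n$ must be checked. This follows from $n\kappa_\pm=(s\pm\sqrt{s(s-4)})/2$ and $\sqrt{s(s-4)}>s-2$, and similarly in case (b). From the cone property, $\beta_N>\kappa_-\alpha_N$ grows, which gives $e_N\ge0$.

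\textbf{Exact lifetime.} Finally I would check that the integrality inequalities \eqref{eq:hyperbolic-all-block-criterion} hold for blocks $0,\dots,N$ and fail at block $N+1$. By invariance of $L_B,L_A,R_B,R_A$, this reduces to the residue sequences. These are increasing backward in time within the cone, so their minimum over blocks $0,\dots,N$ occurs at the last block. At that block, integrality of the terminal pair $A_*,B_*$ has already been verified, and the failure at block $N+1$ is the computation in the first step.
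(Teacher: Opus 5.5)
\textbf{There is a genuine gap in the positivity and monotonicity step.} Your skeleton matches the paper's. You pull the terminal pair back through a block-invariant sparse family and fix the middle exponent by the invariant $me-(m-1)\beta$; that invariant is correct and equals $-(m+1)$ at $B_*$, although the shift of $e$ is actually $(m-1)(\delta-\alpha)$, not the expression you wrote. You get integrality of $e_N$ from $\beta_N\equiv-1\pmod m$, and you reduce the lifetime check to the last successful block by monotonicity. The problem is that your positivity and monotonicity argument rests on a false premise.

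The inequality $\sqrt{s(s-4)}>s-2$ is false, because $s(s-4)=(s-2)^2-4$. Hence $(s-1)/n>\kappa_+$, and $v_*$ lies strictly \emph{above} the contracting eigenline, outside the forward cone. This has to be so. From $v_*$ the forward valuations are $u_2=1$, $u_3=0$, $u_4=-1$, and every $v_N$ flows to $v_*$, so no $v_N$ lies in the forward cone, let alone the bilateral one. Showing that $M^{-1}$ preserves the bilateral cone therefore gives you nothing. Worse, inside the bilateral cone the residue sequences have the form $C_r\Lambda^k+D_r\Lambda^{-k}$ with $C_r,D_r>0$, so they have an interior valley, and the monotonicity you invoke would fail there.

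The correct mechanism uses the opposite sign pattern. Write vectors in the eigenbasis $e_-=(1,\kappa_-)^T$, $e_+=(1,\kappa_+)^T$. Both $v_*$ and $Hv_*=(0,1)^T$ have a negative $e_-$-coefficient and a positive $e_+$-coefficient. The map $M^{-1}$ multiplies these coefficients by $\Lambda^{-1}$ and $\Lambda$ respectively. Hence $v_N=M^{-(N+1)}v_*$ and the half-block valuations $Hv_N=M^{-(N+1)}Hv_*$ increase strictly and componentwise with $N$. This gives $\alpha_N,\beta_N>0$ and $e_N\ge e_0\ge0$. It also shows that all four left sides of the block criterion are smallest at the last successful block.

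\textbf{The final check is incomplete even with monotonicity.} Integrality of $A_*,B_*$ covers only two of the four inequalities of the last good block, namely $u_{4N+4}\ge R_B$ and $u_{4N+5}\ge R_A$. The intermediate states $B_{N,1},A_{N,1}$ also need $u_{4N+2}\ge L_B$ and $u_{4N+3}\ge L_A$, and you never compute the family's thresholds. Evaluating the ceilings with $\beta\equiv-1\pmod m$ gives $L_B=m+1$ and $L_A=R_A=0$. It also gives $R_B=3$ for $m=2$ and $R_B=m-1$ for $m\ge3$. You must then check $Hv_0=H^{-1}v_*=(n(s-2),\,s^2-3s+1)^T$ against these thresholds.

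\textbf{Case (b) is essentially not done.} Your computation of $\Mcal_2A_*$ is wrong as written: with $q=B_1(0)=2$, the $T^{n-1}$ coefficient is $2^{n-3}\cdot2/2^{n-1}=1/2$. Beyond that, case (b) needs its own versions of the steps above:
\begin{itemize}
\item the congruence $\alpha_N\equiv-1\pmod n$;
\item the invariant $ne-(n-1)\alpha=-(n+1)$ for the $T$-coefficient of $A$;
\item the thresholds $L_B=R_B=0$, $L_A=n+1$, $R_A=n-1$;
\item the inequality $(n-2)/(n-1)>\kappa_+$, which again places $v_*$ outside the forward cone.
\end{itemize}
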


\begin{proof}
Because $M_{n,m}\in\operatorname{SL}_2(\Z)$, all vectors appearing in the
statement are integral.

Suppose first that $m\ge2$.  Reduction of $M_{n,m}^{-1}$ modulo $n$ and
$m$ shows inductively that
\[
  \alpha_N\equiv0\pmod n,
  \qquad
  \beta_N\equiv-1\pmod m,
\]
so \eqref{eq:hyperbolic-delayed-e} is integral.  The first backward
iterate of $v_*$ is
\begin{equation}\label{eq:hyperbolic-delayed-first-backward}
  \alpha_0=n(s-1)(s-3),
  \qquad
  \beta_0=s^3-5s^2+6s-1,
\end{equation}
and both entries are positive for $s\ge5$; in fact $\beta_0\ge29$,
so $e_0\ge0$.  The slope of $v_*$ is
$(s-1)/n>\kappa_+$.  In the eigenbasis from
Corollary~\ref{cor:hyperbolic-cone}, $v_*$ therefore has a positive
contracting-eigenline component and a negative expanding-eigenline
component.  Under successive powers of $M_{n,m}^{-1}$ the former grows
and the latter decreases in absolute value.  Since the first backward
iterate is positive, all later $\alpha_N,\beta_N$ are positive and
increase with $N$; hence $e_N\ge e_0\ge0$.  The same argument applied
after multiplication by $H_{n,m}$ shows that the two intermediate constant
valuations also increase with $N$.

For the family \eqref{eq:hyperbolic-delayed-family-mge2}, a direct
reciprocal calculation gives
\[
  A'(T)=T^n+2^{u_5},
  \qquad
  B'(T)=T^m+2^{e'}T+2^{u_4},
\]
where
\[
  e'=e+(m-1)\bigl((s-2)\alpha-n\beta\bigr).
\]
If $e=((m-1)\beta-(m+1))/m$, then
$e'=((m-1)u_4-(m+1))/m$.  Hence one block sends the family with valuation
vector $v_N$ to the same family with vector $v_{N-1}$, and sends $v_0$ to
$v_*$.  Its invariant endpoint thresholds are
\begin{equation}\label{eq:hyperbolic-delayed-thresholds-mge2}
  L_B=m+1,
  \qquad L_A=R_A=0,
  \qquad
  R_B=
  \begin{cases}
    3,&m=2,\\
    m-1,&m\ge3.
  \end{cases}
\end{equation}
For $v_0$, the four left sides of
\eqref{eq:hyperbolic-all-block-criterion} are
\[
  s^2-3s+1,
  \qquad n(s-2),
  \qquad s-1,
  \qquad n,
\]
which satisfy \eqref{eq:hyperbolic-delayed-thresholds-mge2}; the spectral
monotonicity just noted shows that the inequalities remain valid for every
$v_N$.  Thus each of the first $N+1$ blocks is integral.  Formula
\eqref{eq:hyperbolic-terminal-mge2} is exactly the family associated with
$v_*$, while \eqref{eq:hyperbolic-terminal-failure-mge2} is immediate.

Now suppose $m=1$.  Reduction of $M_{n,1}^{-1}$ modulo $n$ gives
$\alpha_N\equiv-1\pmod n$, so
\eqref{eq:hyperbolic-delayed-e-linear} is integral.  The first backward
iterate is
\[
  \alpha_0=n^3-5n^2+6n-1,
  \qquad
  \beta_0=(n-2)(n^2-4n+2),
\]
and is positive for $n\ge5$.  Moreover
$e_0=n^3-6n^2+11n-8>0$, and the later $e_N$ increase with $\alpha_N$.
Also $(n-2)/(n-1)>\kappa_+$; indeed, after squaring the equivalent positive
inequality, the difference is $4n$.  The same eigenbasis argument therefore
shows positivity of all later backward iterates and their half-block
intermediates.

For \eqref{eq:hyperbolic-delayed-family-linear}, one block preserves the
family because
\[
  e'=e+(n-1)\bigl((n-3)\alpha-(n-2)\beta\bigr)
\]
and the relation
$e=((n-1)\alpha-(n+1))/n$ is carried to the same relation for the new
constant valuation.  The invariant thresholds are
\begin{equation}\label{eq:hyperbolic-delayed-thresholds-linear}
  L_B=R_B=0,
  \qquad L_A=n+1,
  \qquad R_A=n-1.
\end{equation}
For the first backward iterate the four left sides of
\eqref{eq:hyperbolic-all-block-criterion} are
\[
  (n-3)(n-1),
  \qquad n^2-3n+1,
  \qquad n-2,
  \qquad n-1,
\]
and satisfy \eqref{eq:hyperbolic-delayed-thresholds-linear}; again all
earlier blocks satisfy them by spectral monotonicity.  The terminal formulas
\eqref{eq:hyperbolic-terminal-linear}--
\eqref{eq:hyperbolic-terminal-failure-linear} then finish the proof.
\end{proof}

\begin{remark}\label{rem:hyperbolic-no-uniform-horizon}
Proposition~\ref{prop:hyperbolic-delayed} shows that the finite valley test
of Corollary~\ref{cor:hyperbolic-cone} cannot be replaced by any fixed
number of initial blocks depending only on $(n,m)$.  The local obstruction
prime may remain the same and the endpoint thresholds may stay invariant
while the first failed inequality moves arbitrarily far down the same
hyperbolic recurrence.
\end{remark}

\section{Point dynamics after coefficient gauges}\label{sec:point-dynamics}

The coefficient-integrality criteria do not by themselves determine the
growth of the compatible quotient coordinates.  This section constructs
periodic coefficient shapes, gives an affine realization at the parabolic
degree boundary, and compares it with hyperbolic growth of normalized
point orbits.  Finite identity certificates and modular sieves record two
arithmetic consequences of the affine realization.

\subsection{Positive four-phase coefficient gauges}
\label{sec:positive-coefficient-gauges}

We first describe the effect of a periodic coefficient gauge on a compatible
point orbit, and then construct such a gauge for every positive-constant
canonical monic chain.

Unfold an alternating monic coefficient chain into a single sequence
$(P_j)$ so that a compatible point orbit $(x_j)$ satisfies
\begin{equation}\label{eq:hyperbolic-point-chain}
  x_{j-1}x_{j+1}=P_j(x_j).
\end{equation}
The canonical monic reciprocal update is then
\begin{equation}\label{eq:hyperbolic-P-recurrence}
  P_{j+1}
  =\frac{1}{P_{j-1}(0)}\,
    \Rcal_{P_j(0)}P_{j-1}.
\end{equation}
Its degrees alternate between $n$ and $m$.

\begin{lemma}[periodic-shape coefficient gauges]
\label{lem:periodic-shape-gauge}
Assume that $P_j(0)>0$ and that there are a positive integer $q$, fixed
monic polynomials $Q_0,\ldots,Q_{q-1}\in\mathbb R[T]$ with positive
constant terms, and positive numbers $\delta_j$ such that
\begin{equation}\label{eq:periodic-shape-gauge}
  P_j(T)=\delta_j^{d_j}
  Q_{j\bmod q}(T/\delta_j),
  \qquad d_j\defeq\deg P_j.
\end{equation}
Then
\begin{equation}\label{eq:periodic-shape-lambda}
  \lambda_j\defeq
  \frac{\delta_j^{d_j}}{\delta_{j-1}\delta_{j+1}}
\end{equation}
depends only on $j\bmod q$.  If
\begin{equation}\label{eq:periodic-shape-point-gauge}
  u_j\defeq \frac{x_j}{\delta_j},
\end{equation}
then the point orbit satisfies the fixed periodic polynomial recurrence
\begin{equation}\label{eq:periodic-shape-point-recurrence}
  u_{j-1}u_{j+1}
  =\lambda_{j\bmod q}\,Q_{j\bmod q}(u_j).
\end{equation}
Moreover any two positive gauges of the form
\eqref{eq:periodic-shape-gauge}, after passing to a common period, differ
only by a positive periodic multiplier.  Hence their normalized point
sequences differ only by a positive periodic multiplier as well.
\end{lemma}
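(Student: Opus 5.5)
The plan is to substitute the gauge form \eqref{eq:periodic-shape-gauge} into the canonical update \eqref{eq:hyperbolic-P-recurrence}. From it we read off a periodicity identity for the scalars $\delta_j$, which is exactly the statement that $\lambda_j$ depends only on $j\bmod q$. The point recurrence then follows by direct substitution, and the uniqueness statement by comparing two gauges of the same periodic shape.

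\emph{Step 1 (periodicity of $\lambda_j$).} Write $c_j\defeq Q_{j\bmod q}(0)>0$, so that $P_j(0)=\delta_j^{d_j}c_j$. The degrees alternate, so $d_{j+1}=d_{j-1}=:e$ and $d_j=:d$. By \eqref{eq:double-reciprocal-gauge}-type bookkeeping, $\Rcal_{P_j(0)}P_{j-1}(T)=T^{e}\delta_{j-1}^{e}Q_{j-1}\bigl(P_j(0)/(\delta_{j-1}T)\bigr)$. Dividing by $P_{j-1}(0)=\delta_{j-1}^{e}c_{j-1}$ gives
\[
 P_{j+1}(T)=\frac{1}{c_{j-1}}\Rcal_{\gamma_j}Q_{j-1}(T),\qquad \gamma_j\defeq\frac{P_j(0)}{\delta_{j-1}}=\frac{\delta_j^{d}c_j}{\delta_{j-1}}.
\]
On the other hand, $P_{j+1}(T)=\delta_{j+1}^{e}Q_{j+1}(T/\delta_{j+1})$. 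Rescale $T=\delta_{j+1}S$ in the first expression; the identity $\Rcal_\gamma Q(\delta S)=\delta^{e}\Rcal_{\gamma/\delta}Q(S)$ then yields
\[
 Q_{j+1}(S)=\frac{1}{c_{j-1}}\Rcal_{\lambda_jc_j}Q_{j-1}(S),
\]
because $\gamma_j/\delta_{j+1}=\lambda_jc_j$. Here $Q_{j+1}$ and $Q_{j-1}$ are fixed polynomials attached to residues mod $q$. Comparing constant terms, the top coefficient of $Q_{j-1}$ is $1$, so $c_{j+1}=(\lambda_jc_j)^{e}/c_{j-1}$. Every $c$ is positive and $e\ge1$, so this determines $\lambda_j=(c_{j+1}c_{j-1})^{1/e}/c_j$ as a positive real depending only on $j\bmod q$.

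This comparison is the one delicate point, and it is where I expect the main obstacle. Positivity of the $\delta_j$ and $c_j$ is essential here: it lets us extract the unique positive $e$-th root. Also, $Q_{j\bmod q}$ is fixed per residue class, so the index bookkeeping must respect $j\pm1\bmod q$. If $q$ is odd the degree alternation forces $n=m$ or a passage to period $2q$; in that case I would simply pass to period $\mathrm{lcm}(q,2)$.

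\emph{Step 2 (point recurrence).} Substitute $x_j=\delta_ju_j$ into \eqref{eq:hyperbolic-point-chain}. This gives
\[
 \delta_{j-1}\delta_{j+1}u_{j-1}u_{j+1}=\delta_j^{d_j}Q_{j\bmod q}(u_j),
\]
which is \eqref{eq:periodic-shape-point-recurrence}.

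\emph{Step 3 (uniqueness up to periodic multiplier).} Suppose $(\delta_j,Q_j)$ and $(\delta_j',Q_j')$ are two positive gauges, both taken with the common period. Then $Q'_{j}(T)=\mu_j^{-d_j}Q_j(\mu_jT)$ with $\mu_j\defeq\delta_j'/\delta_j>0$. If $Q_j$ is not a monomial plus constant with special symmetry, the leading and constant coefficients pin down $\mu_j^{d_j}=c_j/c_j'$. This quantity depends only on the residue, and the positive root is unique, so $\mu_j$ is periodic. (Even without symmetry considerations, comparing constant terms alone already gives $\mu_j^{d_j}$ periodic, hence $\mu_j$ periodic by positivity.) Finally $u_j'=x_j/\delta_j'=u_j/\mu_j$, so the normalized point sequences differ by the positive periodic multiplier $\mu_j^{-1}$.
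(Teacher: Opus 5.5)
Your proof is correct and follows the paper's argument: substitute the gauge into the canonical update and rescale $T\mapsto\delta_{j+1}T$ to get $Q_{j+1}=c_{j-1}^{-1}\Rcal_{\lambda_jc_j}Q_{j-1}$, compare constant terms and use positivity to fix $\lambda_j$ by phase, substitute $x_j=\delta_ju_j$ for the point recurrence, and compare constant terms of two gauges for uniqueness. The odd-$q$ aside is unnecessary: each $Q_r$ is monic, so $d_j=\deg Q_{j\bmod q}$ holds automatically, the exponent $e$ is already $q$-periodic, and no passage to period $\operatorname{lcm}(q,2)$ is needed.
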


\begin{proof}
Write $r=j\bmod q$ and $q_r\defeq Q_r(0)$.  Substituting
\eqref{eq:periodic-shape-gauge} into
\eqref{eq:hyperbolic-P-recurrence}, and then replacing $T$ by
$\delta_{j+1}T$, gives
\[
  Q_{r+1}(T)
  =\frac{1}{q_{r-1}}
    \Rcal_{\theta_j}Q_{r-1}(T),
  \qquad
  \theta_j=q_r\lambda_j.
\]
Taking constant terms yields
\[
  q_{r+1}=\frac{\theta_j^{d_{j-1}}}{q_{r-1}}.
\]
All quantities are positive, so $\theta_j$, and therefore $\lambda_j$,
is determined by the phase $r$ alone.  Substituting
$x_j=\delta_ju_j$ into \eqref{eq:hyperbolic-point-chain} now gives
\eqref{eq:periodic-shape-point-recurrence}.

For the last assertion, suppose also
\[
  P_j(T)=\widehat\delta_j^{d_j}
  \widehat Q_{j\bmod q}(T/\widehat\delta_j)
\]
after replacing the two periods by a common multiple.  Comparing constant
terms gives
\[
  \left(\frac{\widehat\delta_j}{\delta_j}\right)^{d_j}
  =\frac{Q_{j\bmod q}(0)}{\widehat Q_{j\bmod q}(0)}.
\]
Positivity makes the ratio $\widehat\delta_j/\delta_j$ depend only on the
phase.  The same is therefore true of the two normalized point sequences.
\end{proof}

\begin{proposition}[universal four-phase coefficient gauge]
\label{prop:universal-four-phase-gauge}
Let $(P_j)_{j\in\mathbb Z}$ be a canonical monic reciprocal coefficient
chain over $\mathbb Q$, with alternating positive degrees $d_j$ and
positive constant terms $c_j\defeq P_j(0)$.  Thus
\[
 P_{j+1}=\frac{1}{c_{j-1}}\Rcal_{c_j}P_{j-1},
 \qquad d_{j+2}=d_j.
\]
Put $Q_r\defeq P_r$ for $0\le r<4$.  There is a positive rational
four-phase gauge
\[
 P_j(T)=\delta_j^{d_j}Q_{j\bmod4}(T/\delta_j)
\]
with $\delta_0=\delta_1=\delta_2=\delta_3=1$.  It is given uniquely by
the reversible recursion
\begin{equation}\label{eq:universal-four-phase-delta}
 \delta_{j+4}=\frac{c_{j+3}}{c_{j+1}}\delta_j.
\end{equation}
The periodic multipliers of Lemma~\ref{lem:periodic-shape-gauge} are
\[
 \lambda_0=\frac{c_2}{c_0},\qquad
 \lambda_1=\lambda_2=1,\qquad
 \lambda_3=\frac{c_1}{c_3}.
\]
The same conclusion holds for a one-sided canonical monic chain with
positive initial constant terms: its coefficient chain has a unique
positive-constant rational continuation to negative indices, whether or
not its point orbit has any corresponding integral continuation.
\end{proposition}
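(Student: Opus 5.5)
The plan is to read off the gauge from the four-step return formula of Proposition~\ref{prop:four-step-gauge-closure}, applied to the chain shifted by an arbitrary index. First I would rewrite that proposition in chain notation. Take the consecutive polynomials $P_{j-1},P_j$ as the pair $(B,A)$-type input: the canonical updates produce $P_{j+1},P_{j+2},P_{j+3},P_{j+4}$. The computation behind \eqref{eq:gauge-four-return} is just two applications of \eqref{eq:double-reciprocal-gauge}, valid for monic $P_j$:
\[
 P_{j+4}=\frac{1}{c_{j}c_{j+2}}\Rcal_{c_{j+3}}\Rcal_{c_{j+1}}P_j
 =\frac{c_{j+3}^{d_j}}{c_jc_{j+2}}P_j\!\left(\frac{c_{j+1}}{c_{j+3}}T\right).
\]
Since $P_{j+4}$ is monic and $P_j$ is monic of the same degree $d_j$, comparing leading coefficients forces the scalar to be $(c_{j+3}/c_{j+1})^{d_j}$. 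This gives
\[
 P_{j+4}(T)=\rho_j^{d_j}P_j(T/\rho_j),\qquad \rho_j\defeq c_{j+3}/c_{j+1}>0.
\]
Positivity of all $c_j$ follows from the constant-term rule $c_{j+1}=c_j^{d_{j-1}}/c_{j-1}$.

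Second, I would define $\delta_j$ by \eqref{eq:universal-four-phase-delta} with $\delta_0=\dots=\delta_3=1$. The recursion is reversible, since $\delta_j=\delta_{j+4}c_{j+1}/c_{j+3}$, so $\delta_j$ is positive and rational for all $j\in\Z$. Induction on $|k|$ in $j=r+4k$ then gives $P_j(T)=\delta_j^{d_j}Q_r(T/\delta_j)$. The step uses $\delta_{j+4}^{d}Q(T/\delta_{j+4})=\rho_j^{d}\,\delta_j^dQ(T/(\rho_j\delta_j))$, which is exactly the displayed relation. Uniqueness comes from comparing constant terms, as in Lemma~\ref{lem:periodic-shape-gauge}: with $Q_r$ fixed and all quantities positive, $\delta_j^{d_j}=c_j/Q_r(0)$ determines $\delta_j$.

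Third, I would compute the multipliers $\lambda_r=\delta_r^{d_r}/(\delta_{r-1}\delta_{r+1})$ for $r=0,\dots,3$. This needs $\delta_{-1}=\delta_3\,c_0/c_2=c_0/c_2$ and $\delta_4=c_3/c_1$. The result is $\lambda_0=c_2/c_0$, $\lambda_1=\lambda_2=1$, and $\lambda_3=c_1/c_3$. By Lemma~\ref{lem:periodic-shape-gauge} these depend only on the phase.

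For the one-sided statement, I would continue backwards using the reversibility identity $Q=\Mcal_aQ'$ from the proof of Proposition~\ref{prop:hyperbolic-bilateral}. Explicitly, $P_{j-1}=\frac{1}{c_{j+1}}\Rcal_{c_j}P_{j+1}$ and $c_{j-1}=c_j^{d_{j+1}}/c_{j+1}>0$, so the backward continuation is unique, rational, and positive-constant; no integrality is needed.

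The main obstacle is the bookkeeping in the first step: matching the roles of $(A,B)$ and the constants $a,q,r,t$ to the shifted indices $c_{j},\dots,c_{j+3}$, and treating both parities of $j$ uniformly. I would handle this by using the monic leading-coefficient comparison rather than tracking the scalar explicitly.
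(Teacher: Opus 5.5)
Your proposal is correct and follows the paper's proof essentially step for step. The double-reciprocal identity gives $P_{j+4}(T)=\rho_j^{d_j}P_j(T/\rho_j)$ with $\rho_j=c_{j+3}/c_{j+1}$; you fix the scalar by comparing leading coefficients of monic polynomials, where the paper uses the equivalent constant-term relation $c_jc_{j+2}=c_{j+1}^{d_j}$. Induction along residue classes then gives the gauge, the values $\delta_{-1}=c_0/c_2$ and $\delta_4=c_3/c_1$ give the multipliers, and the involutive monic normalization gives the backward continuation. Your explicit uniqueness argument via $\delta_j^{d_j}=c_j/Q_r(0)$ is a small, welcome addition that the paper leaves implicit.
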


\begin{proof}
Taking constant terms gives
\[
 c_{j+2}c_j=c_{j+1}^{d_j}.
\]
Two reciprocal transformations of $P_j$ consequently give
\begin{align*}
 P_{j+4}(T)
 &=\frac{1}{c_{j+2}c_j}\Rcal_{c_{j+3}}
       (\Rcal_{c_{j+1}}P_j)(T)\\
 &=\frac{c_{j+3}^{d_j}}{c_{j+2}c_j}
       P_j\!\left(\frac{c_{j+1}}{c_{j+3}}T\right)\\
 &=\left(\frac{c_{j+3}}{c_{j+1}}\right)^{d_j}
       P_j\!\left(\frac{c_{j+1}}{c_{j+3}}T\right).
\end{align*}
Induction in both directions along each residue class modulo $4$ proves
the asserted gauge.  All its factors are positive rational numbers.
Formula~\eqref{eq:universal-four-phase-delta} at $j=-1$ and $j=0$
gives $\delta_{-1}=c_0/c_2$ and $\delta_4=c_3/c_1$.
Substitution in
$\lambda_j=\delta_j^{d_j}/(\delta_{j-1}\delta_{j+1})$ gives the four
listed multipliers; their periodicity follows from
Lemma~\ref{lem:periodic-shape-gauge}.

For the final assertion, monic reciprocal normalization is an involution
when its parameter is fixed.  Thus the defining recurrence recovers
$P_{j-1}$ uniquely from $P_j,P_{j+1}$.  The backward constant term is
$c_{j-1}=c_j^{d_{j+1}}/c_{j+1}>0$, so this continuation preserves
positive rational constant terms.  This concerns the coefficient chain
alone and places no additional condition on a point orbit.
\end{proof}

\subsection{C-finite point dynamics at the parabolic boundary}\label{sec:parabolic-cfinite}

The coefficient classification above has a pointwise counterpart.  We call a
sequence \emph{C-finite} if it satisfies a homogeneous linear recurrence with
constant coefficients.  The next elementary lemma records the mechanism that
will occur in all three degree patterns with $nm=4$.

\begin{lemma}[two-periodic affine recurrences]\label{lem:two-periodic-cfinite}
Let $(w_j)$ be a sequence over a field of characteristic zero satisfying
\begin{equation}\label{eq:two-periodic-affine}
  w_{j+1}=\alpha_jw_j-w_{j-1}-\beta_j,
  \qquad
  \alpha_{j+2}=\alpha_j,
  \qquad
  \beta_{j+2}=\beta_j.
\end{equation}
Put
\begin{equation}\label{eq:two-periodic-tau}
  \tau\defeq\alpha_0\alpha_1-2.
\end{equation}
Then $(w_j)$ is C-finite and, wherever the displayed terms are defined,
\begin{equation}\label{eq:two-periodic-order-six}
  w_{j+6}-(\tau+1)w_{j+4}+(\tau+1)w_{j+2}-w_j=0.
\end{equation}
\end{lemma}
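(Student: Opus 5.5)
The plan is to eliminate the odd-offset terms, so that each parity class of $(w_j)$ satisfies a constant-coefficient affine second-order recurrence. A single differencing step then removes the constant inhomogeneity.

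First I combine the recurrence \eqref{eq:two-periodic-affine} at the indices $j+1$ and $j-1$:
\[
  w_{j+2}+w_j=\alpha_{j+1}w_{j+1}-\beta_{j+1},
  \qquad
  w_j+w_{j-2}=\alpha_{j-1}w_{j-1}-\beta_{j-1}.
\]
Two-periodicity gives $\alpha_{j-1}=\alpha_{j+1}$ and $\beta_{j-1}=\beta_{j+1}$. Adding the two relations therefore yields
\[
  w_{j+2}+2w_j+w_{j-2}=\alpha_{j+1}(w_{j+1}+w_{j-1})-2\beta_{j+1}.
\]
Next I use the original recurrence at index $j$, in the form $w_{j+1}+w_{j-1}=\alpha_jw_j-\beta_j$, to replace the bracket. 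This gives
\[
  w_{j+2}=(\alpha_j\alpha_{j+1}-2)\,w_j-w_{j-2}-\gamma_j,
  \qquad
  \gamma_j\defeq\alpha_{j+1}\beta_j+2\beta_{j+1}.
\]
Here $\alpha_j\alpha_{j+1}=\alpha_0\alpha_1$ for every $j$, so the coefficient is exactly $\tau$ from \eqref{eq:two-periodic-tau}. The constant $\gamma_j$ depends only on $j\bmod 2$, so $\gamma_{j+2}=\gamma_j$.

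Fix a parity and write $z_k\defeq w_{j+2k}$. The previous step then reads $z_{k+1}-\tau z_k+z_{k-1}=-\gamma$ with a constant $\gamma$. Subtracting this identity from its shift by one index cancels $\gamma$ and gives
\[
  z_{k+2}-(\tau+1)z_{k+1}+(\tau+1)z_k-z_{k-1}=0.
\]
Translating back to the original indices gives \eqref{eq:two-periodic-order-six}. This relation holds for every $j$, since both parities are covered simultaneously, so $(w_j)$ is C-finite.

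The caveat ``wherever the displayed terms are defined'' only requires each identity above to be used at indices where all the terms involved exist. The argument uses only ring operations, so characteristic zero is not really needed. There is no serious obstacle. The one point requiring care is to check that $\gamma_j$ is genuinely two-periodic, so that a single differencing suffices and the characteristic polynomial is $(X-1)(X^2-\tau X+1)$ in $X=E^2$, where $E$ is the shift $w_j\mapsto w_{j+1}$.
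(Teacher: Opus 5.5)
Your argument is correct, but it reaches \eqref{eq:two-periodic-order-six} by a different route from the paper.

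The paper works with the augmented state $(w_j,w_{j-1},1)^T$ and the $3\times3$ transfer matrices $L_j$. It observes that the two-step product $L_{j+1}L_j$ has an upper-left block of determinant $1$ and trace $\tau$, and last diagonal entry $1$. It then applies Cayley--Hamilton, with characteristic polynomial $(X-1)(X^2-\tau X+1)$, to each parity subsequence.

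You instead eliminate the odd-offset terms by hand. This gives the explicit affine second-order relation $w_{j+2}=\tau w_j-w_{j-2}-\gamma_j$ with $\gamma_j=\alpha_{j+1}\beta_j+2\beta_{j+1}$, and you remove the constant by one differencing. The factor $X^2-\tau X+1$ comes from the homogeneous part and $X-1$ from the differencing, so you recover the paper's factorization.

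Your version is fully elementary and gives more information, namely the exact inhomogeneous term on each parity class. The matrix version avoids index bookkeeping. It also extends directly to periodic affine recurrences of any period or order, since the characteristic polynomial of the monodromy product (the same for every starting phase, by cyclic invariance) annihilates every phase subsequence.

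Your remark that characteristic zero is never used is accurate for both arguments.
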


\begin{proof}
Set
\[
  s_j\defeq
  \begin{pmatrix}w_j\\ w_{j-1}\\ 1\end{pmatrix},
  \qquad
  L_j\defeq
  \begin{pmatrix}
    \alpha_j&-1&-\beta_j\\
    1&0&0\\
    0&0&1
  \end{pmatrix}.
\]
Then $s_{j+1}=L_js_j$ and $L_{j+2}=L_j$.  For either parity the two-step
matrix has an upper-left $2\times2$ block of determinant $1$ and trace
$\tau$, while the last diagonal entry is $1$.  Hence its characteristic
polynomial is
\[
  (X-1)(X^2-\tau X+1)
  =X^3-(\tau+1)X^2+(\tau+1)X-1.
\]
Cayley--Hamilton on each parity subsequence gives
\eqref{eq:two-periodic-order-six}.
\end{proof}

\begin{theorem}[C-finite realization at the parabolic degree boundary]
\label{thm:parabolic-cfinite}
Let $A,B\in\Q[T]$ be monic polynomials with nonzero constant terms and
positive degrees $n,m$ satisfying $nm=4$.  Suppose a nondegenerate rational point
orbit realizes the successive canonical monic reciprocal coefficient
states.  After grouping two quotient moves into one half-state, the point
coordinates admit a rational geometric gauge for which the normalized
dynamics is a two-periodic affine recurrence of the form
\eqref{eq:two-periodic-affine}.  Consequently every coordinate sequence at
a fixed phase of the jump cycle is C-finite.  The original, ungauged point
coordinates are also C-finite.

More explicitly, the three degree patterns have the following forms.

\begin{enumerate}[label=\textup{(\roman*)}]
\item If $(n,m)=(2,2)$ and
\[
  A(T)=T^2+a_1T+a,
  \qquad
  B(T)=T^2+b_1T+b,
  \qquad
  c\defeq\frac ab,
\]
then after the geometric gauge described in the proof the normalized
points lie on one fixed conic
\begin{equation}\label{eq:parabolic-22-conic}
  U^2+cV^2+a_1U+cb_1V+a=HUV.
\end{equation}
If $w_{2j}=V_j$ and $w_{2j+1}=U_j$, then
\begin{align}
  w_{2j+2}&=\frac Hc\,w_{2j+1}-w_{2j}-b_1,
  \label{eq:parabolic-22-rec-even}\\
  w_{2j+3}&=H w_{2j+2}-w_{2j+1}-a_1.
  \label{eq:parabolic-22-rec-odd}
\end{align}
Thus \eqref{eq:two-periodic-order-six} holds with
\begin{equation}\label{eq:parabolic-22-tau}
  \tau=\frac{H^2}{c}-2.
\end{equation}

\item If $(n,m)=(1,4)$ and
\begin{equation}\label{eq:parabolic-14-polynomials}
  A(T)=T+a,
  \qquad
  B(T)=T^4+pT^3+qT^2+rT+b,
  \qquad
  C\defeq\frac{a^2}{b},
\end{equation}
then there is an invariant $K\in\Q$ for which a normalized scalar
coordinate $(u_j)$ satisfies
\begin{equation}\label{eq:parabolic-14-affine-recurrence}
  u_{j+1}=
  \begin{cases}
    CK u_j-u_{j-1}-Cr/a,&j\text{ even},\\[1mm]
    K u_j-u_{j-1}-p,&j\text{ odd}.
  \end{cases}
\end{equation}
Hence \eqref{eq:two-periodic-order-six} holds with
\begin{equation}\label{eq:parabolic-14-tau}
  \tau=CK^2-2.
\end{equation}

\item The case $(n,m)=(4,1)$ is obtained from \textup{(ii)} by
interchanging the two polynomial and point coordinates.
\end{enumerate}
\end{theorem}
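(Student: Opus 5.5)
The plan is to combine the parabolic dilation structure of Proposition~\ref{prop:parabolic-blocks} with the periodic-shape gauge of Lemma~\ref{lem:periodic-shape-gauge}. This reduces the point orbit to a fixed periodic polynomial recurrence. Then, in each of the three degree patterns, I will find a conserved quantity that turns that recurrence into a two-periodic affine one, so that Lemma~\ref{lem:two-periodic-cfinite} applies.

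\emph{Step 1 (geometric gauge).} Since $nm=4$, the block map is $\Phi(A,B)=(D^nA(T/D),C^mB(T/C))$ with $C,D$ block invariants. By \eqref{eq:parabolic-block-iterates} and \eqref{eq:parabolic-intermediate-iterates}, every intermediate state in block $k$ is the corresponding state of block $0$ dilated by $C^k$ or $D^k$. This is exactly a gauge \eqref{eq:periodic-shape-gauge} with period $q=4$ and geometric $\delta_j$. The multipliers are powers of $C$ or $D$, so no square roots are needed, because $C^m=D^2$ and $D^n=C^2$.

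For rational (possibly negative) constants I will not invoke positivity. Instead I will check directly that the dilations come with signs absorbed by the monic normalization, so that Lemma~\ref{lem:periodic-shape-gauge}'s computation of $\lambda_j$ still goes through verbatim. Setting $u_j=x_j/\delta_j$ then gives $u_{j-1}u_{j+1}=\lambda_{j\bmod 4}Q_{j\bmod4}(u_j)$ with fixed data.

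\emph{Step 2 (case (i), $(2,2)$).} Here $C=D=c^2$, and by Corollary~\ref{cor:quadratic-block-iterates} the states within a block are dilations by powers of $c$. So it is more economical to gauge by $c^{j}$ per half-block. The pair $(U,V)$ obtained this way should satisfy $U'U=A(\cdot)$-type and $V'V=B(\cdot)$-type relations with fixed $A,B$ up to the factor $c$.

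I will then define
\[
H\defeq\frac{U^2+cV^2+a_1U+cb_1V+a}{UV}.
\]
The check is that the two quotient moves preserve $H$. This is classical Vieta: replacing $U$ by the other root $U'$ of \eqref{eq:parabolic-22-conic} in $U$ gives $UU'=cV^2+cb_1V+a$, which should coincide with the gauged quotient move. Both moves preserve the conic by construction. Vieta's sum formula then gives $U+U'=HV-a_1$ and $V+V'=(H/c)U-b_1$, which are \eqref{eq:parabolic-22-rec-odd} and \eqref{eq:parabolic-22-rec-even}. The value \eqref{eq:parabolic-22-tau} follows from $\alpha_0\alpha_1=H^2/c$.

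\emph{Step 3 (case (ii), $(1,4)$).} Here I group the linear move with the quartic move. A linear move $yy'=x+a$ expresses one coordinate affinely, so after gauging, two consecutive quotient moves combine into a single move on a scalar coordinate. That move is governed by a fixed quartic with a cubic term $p$ and a linear term $r$.

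I expect the main obstacle here: one must exhibit the invariant $K$, analogous to $H$, on the resulting curve, which is no longer a conic. My approach is to write the combined relation $u_{j-1}u_{j+1}=\lambda Q(u_j)$ and look for $K$ as a rational function $(u_{j-1}^2+\cdots+u_j^2+\cdots)/(u_{j-1}u_j)$. I would use the symmetry between the phases to force $Q$ to split appropriately, and then verify invariance by direct substitution using the gauge relations $C^1=D^2$ and $D^4=C^2$. Once $K$ is found, Vieta on the two phases gives \eqref{eq:parabolic-14-affine-recurrence} and $\tau=CK^2-2$. Case (iii) follows from (ii) by symmetry.

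\emph{Step 4 (conclusion).} Lemma~\ref{lem:two-periodic-cfinite} gives C-finiteness of the gauged sequences at each phase. The ungauged coordinates are termwise products of these with geometric sequences $\delta_j$, and a termwise product of a C-finite sequence with a geometric sequence is C-finite: rescale the characteristic roots. Interleaving finitely many phases also preserves C-finiteness. Nondegeneracy, meaning nonzero coordinates, guarantees that all divisions by $UV$ and by the $\delta_j$ are legitimate.
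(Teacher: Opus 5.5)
Your case (i) is the paper's argument. You gauge the $j$th half-state by $c^j$, and then the relations $V_jV_{j+1}=c^{-1}A(U_j)$ and $U_jU_{j+1}=cB(V_{j+1})$ are the two Vieta reflections on the $H$-conic because $a=cb$.

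\textbf{Where the gap is.} It is in case (ii), which carries the real content. You name exhibiting $K$ as the obstacle and leave it as a search, and the search is set up so that it cannot succeed.

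Eliminating the linear move does not give $u_{j-1}u_{j+1}=\lambda Q(u_j)$. Take the gauge $Y_{2k}=C^ku_{2k}$, $Y_{2k+1}=C^ku_{2k+1}$, $X_j=C^jz_j$. The linear relation gives $z_j=u_ju_{j+1}-a$, and the quartic relation becomes
\[
(u_{j-1}u_j-a)(u_ju_{j+1}-a)=Q_{j\bmod 2}(u_j).
\]
Here there are two different quartics, $Q_0(T)=CT^4+CpT^3+CqT^2+CrT+a^2$ and $Q_1(T)=C^{-1}T^4+\frac raT^3+qT^2+paT+a^2$, so there is no single fixed quartic.

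Your ansatz, a fixed quadratic divided by $u_{j-1}u_j$, fails in either reading:
\begin{itemize}
\item If the relation really were $u_{j-1}u_{j+1}=\lambda Q(u_j)$ with $Q$ quartic (as in the interleaved $X,Y$ sequence), a Vieta reflection on such a conic has root product at most quadratic in the fixed coordinate.
\item On the $Y$-coordinates, matching the $u_j^2$-coefficient forces the conic's constant term to be $q+aK$, so a fixed-coefficient ratio over $u_{j-1}u_j$ is not invariant.
\end{itemize}
Also, for $(n,m)=(1,4)$ the dilation relations are $D=C^2$ and $C^4=D^2$, not the ones you wrote.

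\textbf{The missing idea.} Divide by the shifted product, i.e.\ by the normalized $X$-coordinate $z_j$, and use
\[
C^{-1}u^2+\frac rau+v^2+pv+q=K(uv-a).
\]
View this as a quadratic in $u$ with $v=u_j$ fixed ($j$ even). Its roots $u_{j\pm1}$ have sum $CKu_j-Cr/a$ and product $C(u_j^2+pu_j+q+Ka)$. Substitute these into
\[
(u_{j-1}u_j-a)(u_ju_{j+1}-a)=u_j^2u_{j-1}u_{j+1}-au_j(u_{j-1}+u_{j+1})+a^2 .
\]
The two $K$-contributions cancel and one gets exactly $Q_0(u_j)$.

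The same conic with $u,v$ interchanged passes through $(u_j,u_{j+1})$. For odd $j$ it gives root sum $Ku_j-p$ and product rule $Q_1$, with the same $K$. This cancellation is what makes $K$ invariant and yields the two-periodic affine recurrence with $\tau=CK^2-2$.

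\textbf{A smaller gap in Step 4.} The affine recurrence lives only on the $Y$-coordinates, so $X_j=C^j(u_ju_{j+1}-a)$ is a termwise product of C-finite sequences. You therefore also need closure of C-finite sequences under termwise products, not only geometric rescaling and interleaving.
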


\begin{proof}
We treat the three degree patterns separately.

Suppose first that $(n,m)=(2,2)$.  Keep the notation of
Corollary~\ref{cor:quadratic-block-iteration}.  At the $j$th half-state,
write its point as $(X_j,Y_j)$ and put
\begin{equation}\label{eq:parabolic-22-point-gauge}
  X_j=c^jU_j,
  \qquad
  Y_j=c^jV_j.
\end{equation}
The formulas
\eqref{eq:quadratic-block-iterates}--\eqref{eq:quadratic-block-A1}
show that this removes all coefficient dilation.  The two quotient
relations become
\begin{align}
  V_jV_{j+1}
  &=\frac1c\bigl(U_j^2+a_1U_j+a\bigr),
  \label{eq:parabolic-22-fixed-A}\\
  U_jU_{j+1}
  &=c\bigl(V_{j+1}^2+b_1V_{j+1}+b\bigr).
  \label{eq:parabolic-22-fixed-B}
\end{align}
Because $a=cb$, the two replacements are precisely the two Vieta
reflections on \eqref{eq:parabolic-22-conic}.  Thus the value $H$ is
preserved.  Taking the other root first in $V$ and then in $U$ gives
\eqref{eq:parabolic-22-rec-even}--\eqref{eq:parabolic-22-rec-odd}.
Lemma~\ref{lem:two-periodic-cfinite} applies, with
$\alpha_0\alpha_1=H^2/c$.  The sequences $(U_j)$ and $(V_j)$ are
subsequences of $(w_j)$, hence C-finite; multiplication by the geometric
factor $c^j$ in \eqref{eq:parabolic-22-point-gauge} preserves
C-finiteness.

Now suppose $(n,m)=(1,4)$ and use the notation
\eqref{eq:parabolic-14-polynomials}.  Put
\begin{equation}\label{eq:parabolic-14-Bhat}
  \widehat B(T)
  \defeq\frac1b\Rcal_aB(T)
  =T^4+\frac{rC}{a}T^3+qCT^2+paCT+a^2C.
\end{equation}
Proposition~\ref{prop:parabolic-blocks} and
\eqref{eq:parabolic-intermediate-iterates} show that the successive
half-state coefficient pairs are
\begin{equation}\label{eq:parabolic-14-half-states}
  A^{(j)}(T)=T+aC^j,
\end{equation}
with
\begin{align}
  B^{(2k)}(T)&=C^{4k}B(T/C^k),
  \label{eq:parabolic-14-Beven}\\
  B^{(2k+1)}(T)&=C^{4k}\widehat B(T/C^k).
  \label{eq:parabolic-14-Bodd}
\end{align}
Let $(X_j,Y_j)$ be the point at the $j$th half-state.  Compatibility
with the two quotient moves means
\begin{equation}\label{eq:parabolic-14-point-products}
  Y_jY_{j+1}=A^{(j)}(X_j),
  \qquad
  X_{j-1}X_j=B^{(j)}(Y_j).
\end{equation}
Define
\begin{equation}\label{eq:parabolic-14-point-gauge}
  Y_{2k}=C^ku_{2k},
  \qquad
  Y_{2k+1}=C^ku_{2k+1},
  \qquad
  X_j=C^jz_j.
\end{equation}
Then the first relation in \eqref{eq:parabolic-14-point-products} gives
\begin{equation}\label{eq:parabolic-14-z}
  z_j=u_ju_{j+1}-a,
\end{equation}
and the second gives
\begin{equation}\label{eq:parabolic-14-Qproducts}
  z_{j-1}z_j=
  \begin{cases}
    Q_0(u_j),&j\text{ even},\\
    Q_1(u_j),&j\text{ odd},
  \end{cases}
\end{equation}
where
\begin{align}
  Q_0(T)&\defeq CT^4+CpT^3+CqT^2+CrT+a^2,
  \label{eq:parabolic-14-Q0}\\
  Q_1(T)&\defeq C^{-1}T^4+\frac raT^3+qT^2+paT+a^2.
  \label{eq:parabolic-14-Q1}
\end{align}
The products in \eqref{eq:parabolic-14-Qproducts} are equivalent to
alternating Vieta reflections on the two conics
\begin{align}
  C^{-1}u^2+\frac rau+v^2+pv+q
  &=K(uv-a),
  \label{eq:parabolic-14-conic0}\\
  u^2+pu+C^{-1}v^2+\frac rav+q
  &=K(uv-a).
  \label{eq:parabolic-14-conic1}
\end{align}
Indeed, regarding \eqref{eq:parabolic-14-conic0} as a quadratic in $u$,
the other root is
\[
  CKv-u-\frac{Cr}{a},
\]
while regarding \eqref{eq:parabolic-14-conic1} as a quadratic in $u$,
the other root is
\[
  Kv-u-p.
\]
Direct substitution shows that each replacement changes
\eqref{eq:parabolic-14-conic0} into
\eqref{eq:parabolic-14-conic1}, or conversely, with the same $K$; its
root product is exactly \eqref{eq:parabolic-14-Qproducts}.  This proves
\eqref{eq:parabolic-14-affine-recurrence}.  Applying
Lemma~\ref{lem:two-periodic-cfinite} gives
\eqref{eq:two-periodic-order-six} with
$\tau=CK^2-2$.

The actual $Y$-coordinates are also C-finite.  More explicitly,
\begin{equation}\label{eq:parabolic-14-Y-recurrence}
  Y_{j+6}
  -C(CK^2-1)Y_{j+4}
  +C^2(CK^2-1)Y_{j+2}
  -C^3Y_j=0.
\end{equation}
Finally \eqref{eq:parabolic-14-z} gives
$X_j=C^j(u_ju_{j+1}-a)$.  Products of C-finite sequences are C-finite
(for instance by tensoring companion matrices), and multiplication by a
geometric sequence preserves C-finiteness, so $(X_j)$ is C-finite as
well.

The case $(4,1)$ follows by interchanging the two coordinate roles and
the two polynomial roles.  This completes the proof.
\end{proof}

The affine realization also gives a finite verification bound for algebraic
identities on a normalized orbit.  Khomovsky~\cite{Khomovsky2018} develops a
related approach to identities among Lucas sequences through linear systems
for the unknown coefficients and interpolation.  The following is a conic
specialization of the finite-dimensional principle underlying C-finite identity
algorithms~\cite{KauersZimmermann2008,Zeilberger2013}.

\begin{proposition}[finite identity certificates on an invariant conic]
\label{prop:conic-identity-certificate}
Let $\mathbb F$ be a field of characteristic zero, let
$Q\in\mathbb F[U,V]$ have total degree $2$, and let
$T\colon\mathbb F^2\to\mathbb F^2$ be an affine map satisfying
$Q\circ T=\lambda Q$ for some $\lambda\in\mathbb F$.
Suppose $s_0\in\mathbb F^2$ satisfies $Q(s_0)=0$, and put
$s_j\defeq T^j(s_0)$ for $j\ge0$.
For an integer $h\ge0$ and $P\in\mathbb F[U,V]$ of total degree at most $h$,
\[
 P(s_j)=0\quad\text{for all }j\ge0
 \quad\Longleftrightarrow\quad
 P(s_j)=0\quad(0\le j\le2h).
\]
Consequently, for any finite list $P_1,\ldots,P_t$ of polynomials of degree
at most $h$, all coefficient vectors $c\in\mathbb F^t$ for which
$\sum_{i=1}^t c_iP_i(s_j)=0$ for every $j\ge0$ form exactly the kernel of
the $(2h+1)\times t$ evaluation matrix
$\bigl(P_i(s_j)\bigr)_{0\le j\le2h,\,1\le i\le t}$.
\end{proposition}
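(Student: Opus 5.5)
The plan is a Krylov-space argument on the space of degree-$\le h$ polynomial functions modulo the conic. Set
\[
 \mathcal V_h\defeq\{P\in\mathbb F[U,V]:\deg P\le h\},
 \qquad
 W_h\defeq\mathcal V_h\big/\bigl(Q\cdot\mathcal V_{h-2}\bigr),
\]
with $\mathcal V_{h-2}=0$ when $h<2$. Since $Q$ has total degree exactly $2$, multiplication by $Q$ is injective from $\mathcal V_{h-2}$ into $\mathcal V_h$. Hence
\[
 \dim W_h\le\binom{h+2}{2}-\binom{h}{2}=2h+1 .
\]
For $h<2$ this is just $\dim\mathcal V_h\le 2h+1$.

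First I check that every point of the orbit lies on the conic. By induction, $Q(s_j)=\lambda^jQ(s_0)=0$, which holds even when $\lambda=0$. Therefore each evaluation $e_j(P)\defeq P(s_j)$ kills $Q\cdot\mathcal V_{h-2}$ and is a well-defined linear functional on $W_h$.

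Next I define the pullback $T^*P\defeq P\circ T$. Because $T$ is affine, $T^*$ maps $\mathcal V_h$ into itself. It also maps $Q\cdot\mathcal V_{h-2}$ into itself, since $(QR)\circ T=\lambda\,Q\,(R\circ T)$ with $\deg(R\circ T)\le h-2$. So $T^*$ descends to a linear endomorphism of $W_h$, and the evaluations satisfy the shift relation $e_{j+1}=e_j\circ T^*$.

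The key step is stabilization of the spans $E_k\defeq\operatorname{span}(e_0,\dots,e_k)\subset W_h^*$. Suppose $e_{k+1}\in E_k$ for some $k$. Composing with $T^*$ gives $e_{k+2}\in\operatorname{span}(e_1,\dots,e_{k+1})\subset E_k$, and by induction every later $e_j$ lies in $E_k$. So the dimensions $\dim E_k$ increase strictly until the first non-growth, and are constant from then on. These dimensions are bounded by $\dim W_h^*\le 2h+1$, so growth can occur at most $2h+1$ times, counting $\dim E_0\le1$. Hence $E_{2h}$ already contains every $e_j$ for $j\ge0$.

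It follows that if $P(s_j)=0$ for $0\le j\le 2h$, then the class of $P$ is killed by $E_{2h}$, and therefore by every $e_j$; the converse direction is trivial. The statement about the evaluation matrix is the same fact applied to $P=\sum_ic_iP_i$, which has degree at most $h$, so the set of admissible coefficient vectors is exactly that kernel.

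The only delicate point I expect is making sure the quotient by $Q\cdot\mathcal V_{h-2}$ is legitimate without nondegeneracy assumptions. Injectivity of multiplication by $Q$ needs only $Q\ne0$. Invariance of the subspace uses only $Q\circ T=\lambda Q$, which is harmless even when $\lambda=0$ or $Q$ is reducible. Neither well-definedness nor the dimension bound $2h+1$ requires irreducibility of the conic or invertibility of $T$.
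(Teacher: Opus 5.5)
Your proof is correct and follows the paper's route: the same quotient $W_h=\mathcal V_h/Q\mathcal V_{h-2}$ of dimension $2h+1$, the same use of $Q\circ T=\lambda Q$ to make the pullback $P\mapsto P\circ T$ descend to it, and evaluation descending because the orbit stays on the conic. The only difference is the last step: the paper applies Cayley--Hamilton to the substitution operator to get an order-$(2h+1)$ linear recurrence for $P(s_j)$, evaluating only at $s_0$, whereas you run the equivalent dual Krylov-span stabilization argument on the functionals $e_j$.
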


\begin{proof}
Write $\mathcal P_h\defeq\mathbb F[U,V]_{\le h}$, with
$\mathcal P_e\defeq\{0\}$ when $e<0$.
Because $Q$ has degree $2$ and the polynomial ring is a domain,
\[
 (Q)\cap\mathcal P_h=Q\mathcal P_{h-2},
 \qquad
 \dim\bigl(\mathcal P_h/Q\mathcal P_{h-2}\bigr)=2h+1.
\]
For $h\ge2$, the dimension follows by subtracting
$\binom{h}{2}$ from $\binom{h+2}{2}$; for $h=0,1$ it is immediate.
Affine substitution preserves degree, and $Q\circ T=\lambda Q$ makes it
well-defined on this quotient.  Evaluation at $s_0$ also descends to the
quotient because $Q(s_0)=0$.
By Cayley--Hamilton, the sequence $P(T^j(s_0))$ is annihilated by the
monic characteristic polynomial of this $(2h+1)$-dimensional substitution
operator.  Its first $2h+1$ values therefore determine all later values.
This proves the equivalence; applying it to $\sum_i c_iP_i$ proves the
kernel assertion.  No smoothness or irreducibility of the conic is used.
\end{proof}

For the normalized $(2,2)$ orbit of
Theorem~\ref{thm:parabolic-cfinite}, one half-state step is the composition
of the two affine Vieta reflections on
\eqref{eq:parabolic-22-conic}; hence the proposition applies directly.
For $(1,4)$, two scalar steps preserve each of the conics
\eqref{eq:parabolic-14-conic0}--\eqref{eq:parabolic-14-conic1}, so the
proposition applies separately to the two parities.  It follows that the first five
phase states certify quadratic identities in the normalized
conic coordinates.  The quotient identities
\eqref{eq:parabolic-14-Qproducts} have degree at most $4$ in those coordinates:
$u_{j-1}$ and $u_{j+1}$ are affine functions of a consecutive scalar pair,
and $z_j=u_ju_{j+1}-a$ is quadratic.  Thus the first nine states of each
parity certify the corresponding quartic identities.

The degree in this assertion is measured \emph{after substituting} all
auxiliary coordinates.  In particular, degree two in the variables $z_j$
need not mean degree two in the conic coordinates.  The finite certificate
concerns identities along the specified normalized phase orbit; finding an
isolated return, proving integrality or nonvanishing, and imposing
reciprocal coefficient compatibility remain separate questions.

\begin{remark}[the surface parameter is a recurrence coordinate]
\label{rem:parabolic-surface-parameter}
In the $(1,4)$ notation above, the half-state identities imply, for
$j\ge1$,
\begin{equation}\label{eq:parabolic-14-surface-identity}
  aC^{j-1}A^{(j)}(X_j)
  +B^{(j)}(Y_j)
  -a^2C^{2j-1}
  =Y_{j-1}X_jY_j.
\end{equation}
Indeed, the left side equals
\[
  aC^{j-1}(X_j+aC^j)+X_{j-1}X_j-a^2C^{2j-1}
  =X_j(X_{j-1}+aC^{j-1}),
\]
and \eqref{eq:parabolic-14-point-products} at index $j-1$ gives
$X_{j-1}+aC^{j-1}=Y_{j-1}Y_j$.  Thus, after the natural weighting of the
surface, its bilinear parameter is not external to the orbit: it is the
preceding term $Y_{j-1}$ of the same C-finite coordinate sequence.
\end{remark}

\begin{example}[a genuinely dilating $(1,4)$ orbit]
\label{ex:parabolic-14-dilating}
Take
\[
  A(T)=T+2,
  \qquad
  B(T)=T^4+T^3+2T^2+2T+2.
\]
Then $C=2$, so the coefficient orbit is genuinely dilating rather than
periodic.  Starting with $u_0=1$, $u_1=4$, the invariant is $K=8$ and
\eqref{eq:parabolic-14-affine-recurrence} gives
\[
  1,\ 4,\ 30,\ 474,\ 3761,\ 59700,\ 473838,\ldots .
\]
Here $\tau=126$, so
\[
  u_{j+6}-127u_{j+4}+127u_{j+2}-u_j=0.
\]
The corresponding actual half-state coordinates begin
\[
  (X_j,Y_j)=(2,1),\ (236,4),\ (56872,60),\ldots,
\]
and the individual quotient chain begins
\[
  (2,1)\longrightarrow(2,4)\longrightarrow(236,4)
  \longrightarrow(236,60)\longrightarrow(56872,60)\longrightarrow\cdots.
\]
For example,
\[
  \widehat B(T)=T^4+2T^3+4T^2+4T+8,
  \qquad
  \widehat B(4)=472=2\cdot236,
\]
while the next quartic state is
\[
  T^4+2T^3+8T^2+16T+32
\]
and its value at $60$ is $236\cdot56872$.  Thus the C-finite reduction is
not restricted to signed-unit coefficient cycles.
\end{example}

\begin{remark}[two different spectral classifications]
\label{rem:parabolic-two-spectra}
The adjective \emph{parabolic} in Proposition~\ref{prop:parabolic-blocks}
refers to the coefficient dynamics and is forced by the degree product
$nm=4$.  After the geometric gauge, the normalized point dynamics has its
own two-step trace $\tau=\alpha_0\alpha_1-2$.  It may therefore be
elliptic, parabolic, or hyperbolic according to $|\tau|<2$, $|\tau|=2$,
or $|\tau|>2$, while remaining linear and C-finite in all three cases.
\end{remark}

\subsection{Periodic local sieves on integral recurrence states}
\label{sec:periodic-local-sieves}

Finite algebraic identity certificates have a complementary arithmetic
counterpart when a recurrence has an integral invertible state model.
Reduction modulo an integer then gives exact periodic index sets for local
obstructions.  This concerns the chosen integral state; the rational
normalized recurrences above need not themselves have integral coefficients.

\begin{theorem}[periodic local-obstruction sieve]
\label{thm:periodic-affine-sieve}
Let $d\ge1$, let $A\in\operatorname{GL}_d(\Z)$ and $b\in\Z^d$, and let
\[
 v_{i+1}=Av_i+b,\qquad v_0\in\Z^d,\qquad i\ge0.
\]
For each $r\in\{1,\ldots,s\}$ choose
$\kappa_r\in\Z[T_1,\ldots,T_d]$, a modulus $M_r\ge2$, and a set of
forbidden residues $\mathcal B_r\subseteq\Z/M_r\Z$.
Let $h_r$ be the period of the state orbit $v_i\pmod{M_r}$.
Then the index set
\begin{equation}\label{eq:affine-obstruction-indices}
 \mathcal I_r\defeq
 \{i\ge0:\kappa_r(v_i)\bmod M_r\in\mathcal B_r\}
\end{equation}
is a union of residue classes modulo $h_r$.
The complete pattern of these finitely many local tests is periodic modulo
\begin{equation}\label{eq:combined-local-period}
 H\defeq\operatorname{lcm}(h_1,\ldots,h_s).
\end{equation}
In particular, if membership in $\mathcal B_r$ certifies the inadmissibility
of the $r$th candidate, and all candidates are excluded on one residue
class modulo $H$, then they are simultaneously excluded at infinitely many
positions of the same orbit.
\end{theorem}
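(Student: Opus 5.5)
The plan is to reduce everything to the finiteness and invertibility of the affine map modulo each $M_r$. Fix $r$ and consider the reduced map
\[
 F_r\colon(\Z/M_r\Z)^d\to(\Z/M_r\Z)^d,\qquad F_r(w)\defeq \bar A w+\bar b .
\]
Since $A\in\operatorname{GL}_d(\Z)$, we have $\det A=\pm1$, so $\bar A$ is invertible over $\Z/M_r\Z$. Hence $F_r$ is a bijection of a finite set, with inverse $w\mapsto\bar A^{-1}(w-\bar b)$.

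Because $F_r$ is a permutation, the orbit of $\bar v_0$ is \emph{purely} periodic, with no preperiod. Indeed, if $\bar v_i=\bar v_j$ with $i<j$, applying $F_r^{-i}$ gives $\bar v_0=\bar v_{j-i}$. Thus the period $h_r$ in the statement is well defined, and $v_{i+h_r}\equiv v_i\pmod{M_r}$ holds for every $i\ge0$. This absence of a preperiod is the only genuinely essential point. It is where the hypothesis $A\in\operatorname{GL}_d(\Z)$, rather than just $A$ being an integer matrix, is used, and I expect it to be the one step needing care.

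Next I use that $\kappa_r$ has integer coefficients. Then $\kappa_r(v)\bmod M_r$ depends only on $v\bmod M_r$, so $\kappa_r(v_{i+h_r})\equiv\kappa_r(v_i)\pmod{M_r}$. Consequently $i\in\mathcal I_r$ if and only if $i+h_r\in\mathcal I_r$. This shows that $\mathcal I_r$ is the union of the classes $i_0+h_r\Z_{\ge0}$ over those $i_0\in\{0,\dots,h_r-1\}$ lying in $\mathcal I_r$.

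For the combined statement, each $h_r$ divides $H$, so every $\mathcal I_r$ is also a union of classes modulo $H$. The tuple of indicators $(\mathbf 1_{\mathcal I_1}(i),\ldots,\mathbf 1_{\mathcal I_s}(i))$ is therefore $H$-periodic.

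Finally, suppose some residue class $i_0\bmod H$ has $i_0\in\bigcap_r\mathcal I_r$. Then every $i\equiv i_0\pmod H$ with $i\ge0$ lies in every $\mathcal I_r$. This is an infinite set of positions at which all candidates are simultaneously excluded, which gives the last assertion.
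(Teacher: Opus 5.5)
Your proposal is correct and follows the paper's proof step for step. Because $\det A=\pm1$, the affine map is a permutation of $(\Z/M_r\Z)^d$, so the state orbit is purely periodic; integer-coefficient polynomials respect congruences, and the least common multiple then gives the combined statement. Your explicit argument that a permutation orbit has no preperiod (apply $F_r^{-i}$) is a useful detail that the paper only asserts.
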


\begin{proof}
For each $M\ge2$, the reduction of $A$ lies in
$\operatorname{GL}_d(\Z/M\Z)$, because $\det A\in\{1,-1\}$.
Consequently $v\mapsto Av+b$ permutes the finite state space
$(\Z/M\Z)^d$, with inverse $v\mapsto A^{-1}(v-b)$.
Every state orbit is therefore periodic from its first term, with no
transient segment.  Since a polynomial with integer coefficients respects
congruences, $\kappa_r(v_i)\pmod{M_r}$ depends only on $i\pmod{h_r}$.
This proves the residue-class description; taking the least common
multiple proves the simultaneous assertion.
\end{proof}

The third-order recurrence that arises on fixed Pell/Vieta branches is an
immediate special case.  Suppose
\begin{equation}\label{eq:vieta-third-order}
 x_{i+3}=C(x_{i+2}-x_{i+1})+x_i,\qquad i\ge0,
\end{equation}
where $C\in\Z$ and $x_0,x_1,x_2\in\Z$.
Put
\begin{equation}\label{eq:vieta-state-vector}
 S_i\defeq\begin{pmatrix}x_i\\x_{i+1}\\x_{i+2}\end{pmatrix},
 \qquad
 A_C\defeq\begin{pmatrix}0&1&0\\0&0&1\\1&-C&C\end{pmatrix}.
\end{equation}
Then $S_{i+1}=A_CS_i$ and
\begin{equation}\label{eq:vieta-transfer-det}
 \det A_C=1.
\end{equation}
For an affine section-transfer candidate
$\kappa_r(X)=\alpha_rX+\beta_r\in\Z[X]$, apply the theorem to the state
polynomial $\alpha_r T_1+\beta_r$.  It follows that every forbidden-residue
condition
\begin{equation}\label{eq:affine-local-obstruction}
 \kappa_r(x_i)\bmod M_r\in\mathcal B_r
\end{equation}
excludes an exactly computable union of orbit-index classes modulo the
state period, and finitely many such conditions combine through
\eqref{eq:combined-local-period}.  Our companion manuscript \emph{Multivariable Congruence Jumping:
Face Transport, Vieta Mutations, and Six-Jump Returns} applies this
criterion to affine section transfers on fixed Pell/Vieta branches.

\begin{remark}[matrix order as a uniform certificate]
\label{rem:matrix-order-certificate}
In the affine case, set
\[
 \widehat A\defeq
 \begin{pmatrix}A&b\\0&1\end{pmatrix}.
\]
The period of each state modulo $M$ divides the order of $\widehat A$ in
$\operatorname{GL}_{d+1}(\Z/M\Z)$.  Thus a proved congruence
$\widehat A^{\ell}\equiv I\pmod M$ is a uniform period certificate.
For a linear state, one can use $A$ directly; in particular,
$A_C^{\ell}\equiv I\pmod M$ certifies periodicity of every solution of
\eqref{eq:vieta-third-order} modulo $M$.
If the exact matrix order is wanted, it is enough in addition to check
$\widehat A^{\ell/p}\not\equiv I\pmod M$ for every prime $p\mid\ell$.
An individual state, or its admissibility predicate, may have a smaller
period.  A finite observed residue pattern without a proved state return
or matrix relation does not provide this certificate.
\end{remark}

The all-moduli conclusion requires the specified integral unimodular
model.  A rational geometric gauge or rational affine linearization does
not establish that hypothesis.  One must first justify an integral model;
alternatively, for a particular modulus one may reduce a rational model
only after checking that its denominators and determinant are invertible
modulo that modulus.  Local admissibility remains a necessary condition
for a section transfer, and does not by itself establish a return.

\subsection{Hyperbolic point growth after periodic coefficient gauges}
\label{sec:hyperbolic-point-growth}

The four-phase gauge removes the changing coefficient shapes.  It need
not linearize the point recurrence.  In fact the degree product again gives the decisive spectral
boundary.

\begin{proposition}[hyperbolic growth obstruction]
\label{prop:hyperbolic-point-growth}
Let $(u_j)$ be an eventually positive solution of a periodic recurrence
\begin{equation}\label{eq:hyperbolic-periodic-polynomial-recurrence}
  u_{j-1}u_{j+1}=F_j(u_j),
  \qquad F_{j+q}=F_j,
\end{equation}
where the leading coefficients of the $F_j$ are positive and the degrees
alternate between positive integers $n,m$ with
\[
  s\defeq nm>4.
\]
Assume that $u_j\to+\infty$.  Put
\begin{equation}\label{eq:hyperbolic-point-xi}
  \xi_+\defeq
  \frac{s-2+\sqrt{s(s-4)}}2>1.
\end{equation}
Then for each parity $r\in\{0,1\}$ there is a constant $C_r>0$ such that
\begin{equation}\label{eq:hyperbolic-point-log-asymptotic}
  \log u_{2k+r}
  =C_r\xi_+^k+o(\xi_+^k).
\end{equation}
In particular $(u_j)$ grows faster than every exponential sequence and is
not C-finite.
\end{proposition}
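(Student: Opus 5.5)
Pass to logarithms and treat the recurrence as a perturbation of the linear valuation-type recurrence already analysed in Proposition~\ref{prop:hyperbolic-blocks}. Put $\ell_j\defeq\log u_j$, defined for all large $j$. Write $d_j\in\{n,m\}$ for the degree of $F_j$ and $f_j>0$ for its leading coefficient. Since $u_j\to+\infty$ and there are only $q$ distinct polynomials, we have $F_j(u_j)=f_ju_j^{d_j}(1+O(1/u_j))$ uniformly. This gives
\[
 \ell_{j+1}=d_j\ell_j-\ell_{j-1}+\varepsilon_j,
 \qquad
 \varepsilon_j=\log f_j+O(u_j^{-1}),
\]
so $(\varepsilon_j)$ is bounded. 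Group the recurrence two steps at a time. With $w_k\defeq(\ell_{2k+1},\ell_{2k})^T$ (or the appropriate phase ordering), the two-step transfer matrix is exactly the half-block matrix $H_{n,m}$ of \eqref{eq:hyperbolic-half-matrix}, up to conjugation by the ordering of coordinates. Hence $w_{k+1}=H_{n,m}w_k+\eta_k$ with $\eta_k$ bounded. By Proposition~\ref{prop:hyperbolic-blocks}, $H_{n,m}$ has real eigenvalues $\xi_+>1>\xi_-=\xi_+^{-1}>0$, with eigenvectors $e_-=(1,\kappa_-)^T$ (expanding) and $e_+=(1,\kappa_+)^T$ (contracting).

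Next, expand in the eigenbasis: $w_k=c_ke_-+d_ke_+$. Then $c_{k+1}=\xi_+c_k+O(1)$ and $d_{k+1}=\xi_-d_k+O(1)$. The contracting coordinate therefore stays bounded. For the expanding one, $c_k\xi_+^{-k}$ is Cauchy, since its increments are $O(\xi_+^{-k})$, so it converges to a limit $C_*$ with $c_k=C_*\xi_+^k+O(1)$. Reading off the two coordinates gives $\ell_{2k+r}=C_r\xi_+^k+O(1)$, where $C_r$ is $C_*$ times the corresponding component of $e_-$ (either $1$ or $\kappa_->0$). This is stronger than the claimed $o(\xi_+^k)$.

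The main obstacle is showing $C_*>0$, not merely $C_*\ge0$. Nonnegativity is easy: $\ell_j\to+\infty$ while the contracting part is bounded. To rule out $C_*=0$, I would use the exact tail formula $c_k=C_*\xi_+^k-\sum_{i\ge k}\xi_+^{k-1-i}\eta_i'$, where $\eta_i'$ is the expanding component of $\eta_i$. If $C_*=0$, then $c_k$ would be bounded, hence $\ell_j$ would be bounded, contradicting $u_j\to\infty$. This gives $C_*>0$.

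Finally, the growth claim. Since $\log u_{2k+r}\sim C_r\xi_+^k$ with $\xi_+>1$, the ratio $\log u_j/j$ tends to infinity, so $(u_j)$ eventually dominates every $\rho^j$. A C-finite sequence satisfies $|u_j|\le Kj^{D}\rho^j$, where $\rho$ is the largest modulus of a characteristic root, so it cannot be C-finite.
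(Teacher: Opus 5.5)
Your argument is correct and is essentially the paper's proof: take logarithms, group two steps into a perturbed linear system with eigenvalues $\xi_+^{\pm1}$, use the eigenbasis tail formula for the expanding coordinate, and combine the divergence of $\log u_j$ with positivity of the expanding eigenvector to get $C_r>0$. The paper additionally subtracts a periodic particular solution so that the perturbation tends to $0$, but as you note bounded perturbations already suffice and give an $O(1)$ error; just choose the grouping phase so that the two-step matrix is literally $H_{n,m}$, since the other phase yields $H_{m,n}$, which has the same spectrum but is not a coordinate reordering of $H_{n,m}$.
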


\begin{proof}
Let $d_j\defeq\deg F_j$ and let $a_j>0$ be the leading coefficient of
$F_j$.  Since the polynomial family is finite and $u_j\to+\infty$,
\[
  \log F_j(u_j)
  =d_j\log u_j+\log a_j+o(1).
\]
Writing $L_j\defeq\log u_j$ therefore gives
\begin{equation}\label{eq:hyperbolic-log-forced}
  L_{j+1}=d_jL_j-L_{j-1}+c_j+o(1),
  \qquad c_j\defeq\log a_j.
\end{equation}
The sequence $(c_j)$ is periodic.  With a period divisible by both $q$
and $2$, the affine recurrence obtained by deleting the $o(1)$ term has a
periodic particular solution: its homogeneous monodromy has eigenvalues
which are positive powers of $\xi_+$ and $\xi_+^{-1}$, and hence does not
have eigenvalue $1$.  Subtract that periodic solution from $(L_j)$.
Grouping two successive steps then gives
\begin{equation}\label{eq:hyperbolic-log-matrix}
  W_{k+1}=HW_k+\varepsilon_k,
  \qquad \varepsilon_k\longrightarrow0,
\end{equation}
where $\det H=1$, $\operatorname{tr}H=s-2$, and the eigenvalues of $H$
are $\xi_+$ and $\xi_+^{-1}$.

In the eigenbasis of $H$, the expanding coordinate has the form
\[
  A_k=\xi_+^k
  \left(A_0+\sum_{i=0}^{k-1}\xi_+^{-i-1}\varepsilon_i^{(+)}\right).
\]
The parenthesized expression has a finite limit $A$.  If $A=0$, the tail
formula and $\varepsilon_i^{(+)}\to0$ imply $A_k\to0$; the contracting
coordinate also tends to $0$.  Then $L_j$ would remain bounded after the
periodic correction, contrary to $u_j\to+\infty$.  Thus $A\ne0$.
The expanding eigenvector has two positive coordinates, so eventual
positivity and divergence force $A>0$.  This proves
\eqref{eq:hyperbolic-point-log-asymptotic}.

Every C-finite sequence has magnitude $O(k^eR^k)$ for some $e$ and $R$,
whereas \eqref{eq:hyperbolic-point-log-asymptotic} gives
$u_{2k+r}=\exp(C_r\xi_+^k+o(\xi_+^k))$.  Hence $(u_j)$ cannot be
C-finite.
\end{proof}

\begin{corollary}[gauge-independent obstruction]
\label{cor:hyperbolic-gauge-obstruction}
Let a canonical monic coefficient chain have positive rational constant
terms and $nm>4$.  Use the rational four-phase gauge of
Proposition~\ref{prop:universal-four-phase-gauge}, and suppose that its
normalized point orbit is eventually positive and tends to $+\infty$.  Then no positive coefficient-periodicizing gauge can make the
point orbit C-finite.
\end{corollary}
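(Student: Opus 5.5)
The plan is to reduce the corollary to Lemma~\ref{lem:periodic-shape-gauge} and Proposition~\ref{prop:hyperbolic-point-growth}.

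First, Proposition~\ref{prop:universal-four-phase-gauge} gives a positive rational four-phase gauge $P_j(T)=\delta_j^{d_j}Q_{j\bmod4}(T/\delta_j)$. The monic shapes $Q_r$ have positive constant terms, and the multipliers $\lambda_0,\dots,\lambda_3$ are positive. By Lemma~\ref{lem:periodic-shape-gauge}, the normalized sequence $u_j=x_j/\delta_j$ then satisfies
\[
  u_{j-1}u_{j+1}=\lambda_{j\bmod4}Q_{j\bmod4}(u_j).
\]
This is a recurrence of the form \eqref{eq:hyperbolic-periodic-polynomial-recurrence} with period $q=4$. Each $F_j=\lambda_jQ_j$ has positive leading coefficient $\lambda_j$, and the degrees alternate between $n$ and $m$.

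By hypothesis, $(u_j)$ is eventually positive and tends to $+\infty$. Proposition~\ref{prop:hyperbolic-point-growth} therefore applies: $\log u_{2k+r}\sim C_r\xi_+^k$ with $C_r>0$, so $(u_j)$ is not C-finite.

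Now take any other positive coefficient-periodicizing gauge $\widehat\delta_j$ with shapes $\widehat Q_{j\bmod\widehat q}$. Pass to a common period. The last assertion of Lemma~\ref{lem:periodic-shape-gauge} says the new normalized sequence is $\widehat u_j=\pi_{j}u_j$, where $\pi_j>0$ depends only on $j$ modulo the common period. This gives two routes to the conclusion.

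\emph{Route 1.} The sequence $\widehat u_j$ is again eventually positive and tends to $+\infty$. By Lemma~\ref{lem:periodic-shape-gauge} it satisfies a periodic recurrence of the same type, with positive leading coefficients, since the multipliers $\widehat\lambda$ are positive. Proposition~\ref{prop:hyperbolic-point-growth} applies directly, so $\widehat u_j$ is not C-finite.

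\emph{Route 2.} One can argue by closure instead. If $\widehat u$ were C-finite, then $u_j=\pi_j^{-1}\widehat u_j$ would be C-finite too, because multiplication by a periodic sequence preserves C-finiteness (a periodic sequence is C-finite, and products of C-finite sequences are C-finite). That would be a contradiction.

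Route~1 is cleaner. I will include the growth remark as the decisive point: $\log\widehat u_{2k+r}=\log u_{2k+r}+O(1)$ still grows like $C_r\xi_+^k$. This is incompatible with the $O(k^eR^k)$ bound satisfied by every C-finite sequence.

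The main subtlety is interpreting "make the point orbit C-finite". This could refer to the normalized sequence $\widehat u_j$, or to the original coordinates $x_j=\widehat\delta_j\widehat u_j$.

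For the original coordinates I would argue directly from the gauge recursion \eqref{eq:universal-four-phase-delta}. Since $\log\delta_{j+4}-\log\delta_j=\log(c_{j+3}/c_{j+1})$, the size of $\log\delta_j$ is governed by the constant-term valuations. These evolve by the matrix $M_{n,m}$, with spectral radius $\xi_+^2$ per block. So $\log x_j$ is a sum of two terms, each at most of order $\xi_+^{k}$.

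Cancellation between these two terms is the one point needing care. To avoid it, I would restrict the claim to the normalized orbit. That is what "gauge-independent" means: every positive periodicizing gauge yields the same normalized orbit up to a bounded periodic factor. The statement as written concerns exactly that orbit.
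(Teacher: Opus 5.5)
Your proposal is correct and follows the paper's own argument: Lemma~\ref{lem:periodic-shape-gauge} turns each positive periodicizing gauge into a periodic recurrence to which Proposition~\ref{prop:hyperbolic-point-growth} applies, and any two such gauges differ by a positive periodic multiplier that preserves C-finiteness in both directions. Your Routes~1 and~2 are the two halves of that two-line proof, and your closing digression on the ungauged coordinates $x_j$ is unnecessary, since the statement concerns only the normalized orbits.
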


\begin{proof}
Lemma~\ref{lem:periodic-shape-gauge} reduces every such gauge to a periodic
polynomial recurrence of the form
\eqref{eq:hyperbolic-periodic-polynomial-recurrence}, so
Proposition~\ref{prop:hyperbolic-point-growth} applies.  Any two positive
coefficient-periodicizing gauges differ only by a positive periodic
multiplier, which preserves C-finiteness in both directions.
\end{proof}

\begin{example}[the mixed-degree $c=5$ chain]
\label{ex:hyperbolic-c5-point-growth}
The companion arithmetic paper~\cite{AlekseyevKhomovsky2026Arithmetic} constructs the surface
\[
  2x^5+x^2+y^3-3y+5=zxy
\]
and proves that every nonzero integral point generates an infinite
canonical quotient chain.  After the first jump, take the four monic phase
representatives
\begin{align*}
  Q_0(T)&=T^3-3T^2+25,\\
  Q_1(T)&=T^5+125T^3+2\cdot5^9,\\
  Q_2(T)&=T^3-12\cdot5^{16}T+8\cdot5^{25},\\
  Q_3(T)&=T^5+2^8 5^{69}T^2+2^{14}5^{116}.
\end{align*}
Choose the gauge in Proposition~\ref{prop:universal-four-phase-gauge} with
$\delta_0=\delta_1=\delta_2=\delta_3=1$.  Direct reciprocal
normalization gives
\[
  \delta_4=2^{13}5^{107},
  \qquad
  \delta_5=2^{36}5^{298},
  \qquad
  \delta_6=2^{167}5^{1383},
  \qquad
  \delta_7=2^{465}5^{3851},
\]
illustrating the supergeometric coefficient scale.  The four periodic
multipliers in \eqref{eq:periodic-shape-point-recurrence} are
\begin{equation}\label{eq:c5-periodic-lambdas}
  \lambda_0=2^3 5^{23},
  \qquad
  \lambda_1=1,
  \qquad
  \lambda_2=1,
  \qquad
  \lambda_3=2^{-13}5^{-107}.
\end{equation}
Thus the coefficient gauge reduces the changing states exactly to four
fixed polynomial quotient rules; it does not make them linear.

For the integral point $(x,y,z)=(1,1,6)$, write the quotient chain after
the first jump as
\[
  x_{-1}=1,
  \qquad
  x_0=8,
  \qquad
  x_1=345,
\]
so that $x_{j-1}x_{j+1}=P_j(x_j)$ with $P_j$ in the four phases above.
The next two terms are
\[
  x_2=611591853125,
  \qquad
  x_3=663080077940370041526812744140625.
\]
For $u_j=x_j/\delta_j$ one therefore has $u_j=x_j$ for $0\le j\le3$.
At phase $3$,
\[
  u_4
  =\frac{Q_3(u_3)}{2^{13}5^{107}u_2}
  >10^{69}.
\]
At the next phase,
\[
  2^3 5^{23}Q_0(t)>\frac12\,2^3 5^{23}t^3
  \qquad(t\ge6).
\]
Since $u_3<10^{33}$, the displayed bound on $u_4$ gives
$\frac12 2^3 5^{23}u_4/u_3>1$, and therefore $u_5>u_4^2$.  Thereafter the same inequality propagates through the
four phases.  Indeed, if $u_j\ge u_{j-1}^2$, then
$u_{j-1}\le u_j^{1/2}$; the phase-$1$ polynomial dominates $u_j^5$, the
phase-$2$ polynomial dominates $u_j^3/2$ once
$u_j\ge2\sqrt6\,5^8$, and the phase-$3$ polynomial dominates
$u_j^5/(2^{13}5^{107})$, for which
\[
  u_j\ge(2^{13}5^{107})^{2/5}
\]
is sufficient to preserve $u_{j+1}\ge u_j^2$.  These thresholds are
already exceeded before the corresponding phases recur.  Hence
\begin{equation}\label{eq:c5-square-growth}
  u_{j+1}>u_j^2
  \qquad(j\ge4).
\end{equation}
The normalized point orbit is therefore superexponential and not
C-finite.  Here $s=15$, so Proposition~\ref{prop:hyperbolic-point-growth}
identifies the sharper two-step logarithmic growth factor as
$(13+\sqrt{165})/2$.  Since any other positive gauge reducing this
coefficient chain to periodic shapes differs only by a periodic multiplier,
the conclusion is independent of the chosen coefficient-periodicizing
gauge.
\end{example}

\begin{remark}[the degree boundary is a pointwise boundary]
\label{rem:degree-boundary-pointwise}
Theorems~\ref{thm:parabolic-cfinite} and
Proposition~\ref{prop:hyperbolic-point-growth} distinguish two uses of the
coefficient gauge.  At $nm=4$ it removes the parabolic dilation and leaves
a two-periodic affine Vieta system, hence C-finite point dynamics.  For
$nm>4$ and positive constant terms,
Proposition~\ref{prop:universal-four-phase-gauge} always reduces the
coefficient chain to four polynomial shapes, but every eventually positive normalized
orbit tending to infinity has hyperbolic logarithmic growth.  Thus the
difference is not the existence of a coefficient gauge; it is the
nonlinear point dynamics that survives after gauging.  Bounded or periodic
hyperbolic point orbits are not excluded by this obstruction.
\end{remark}

\section{Primitive quadratic coefficient dynamics}\label{sec:quadratic-coeff}

For a nonzero integer polynomial $P$, let $\operatorname{pp}(P)$ denote its primitive part, with sign chosen so that its leading coefficient is positive.  On primitive quadratics with nonzero constant terms define the normalized jump-and-swap map
\begin{equation}\label{eq:J-map}
  \mathcal J(A,B)
  \defeq
  \bigl(\operatorname{pp}(\Rcal_{A(0)}B),A\bigr).
\end{equation}
This records the coefficient transformation produced by one quotient jump, followed by primitive normalization and interchange of the two coordinates.

\begin{proposition}[periodic primitive quadratic states]\label{prop:quadratic-periodic}
The map $\mathcal J$ is a bijection.  If a primitive pair $(A,B)$ is periodic under $\mathcal J$, then there are $C>0$, $b,e\in\Z$, and $\varepsilon,\eta\in\{\pm1\}$ such that
\begin{equation}\label{eq:periodic-quadratic-form}
  A(T)=T^2+bT+\varepsilon C,
  \qquad
  B(T)=T^2+eT+\eta C.
\end{equation}
Conversely every pair of the form \eqref{eq:periodic-quadratic-form} is periodic.  Its period divides $2$ when $\varepsilon=\eta$, and divides $4$ when $\varepsilon=-\eta$; more precisely, in the latter case the period is $2$ exactly when $b=e=0$ and is $4$ otherwise.

In particular, there are no genuinely nonmonic periodic primitive quadratic coefficient states.
\end{proposition}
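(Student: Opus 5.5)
The plan is to prove bijectivity by exhibiting the inverse, and then to get the periodic classification from a valuation bookkeeping argument applied prime by prime. That argument first forces monicity and then equal constant terms. The converse and the exact periods come from an explicit two-step computation.

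\emph{Bijectivity.} I take the domain to be primitive quadratics with positive leading coefficient and nonzero constant term; $\mathcal J$ preserves this set because $\Rcal_cB$ has leading coefficient $B(0)\ne0$ and constant term $\operatorname{lc}(B)c^2\ne0$. For $(A',B')=\mathcal J(A,B)$ we have $A=B'$. Since $\Rcal_c\Rcal_cP=c^{2}P$ for quadratics, $\Rcal_{A(0)}A'$ is a nonzero rational multiple of $B$. The normalization of $\operatorname{pp}$ therefore gives $B=\operatorname{pp}(\Rcal_{B'(0)}A')$. Hence the candidate inverse is $(A',B')\mapsto(B',\operatorname{pp}(\Rcal_{B'(0)}A'))$. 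The same double-reciprocal identity shows that it is a two-sided inverse.

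\emph{Necessity.} I unfold the orbit into a single sequence $P_k$ with
\[
P_{k+1}=\operatorname{pp}(\Rcal_{P_k(0)}P_{k-1}).
\]
Fix a prime $p$ and set $\ell_k\defeq v_p(\operatorname{lc}P_k)$, $c_k\defeq v_p(P_k(0))$, and let $g_k$ be the $p$-part exponent of the content that is removed. Since $\Rcal_cP$ has leading coefficient $P(0)$ and constant term $\operatorname{lc}(P)c^2$, we get
\[
\ell_{k+1}=c_{k-1}-g_k,\qquad c_{k+1}=\ell_{k-1}+2c_k-g_k .
\]
Subtracting gives
\[
c_{k+1}-\ell_{k+1}=2c_k-(c_{k-1}-\ell_{k-1}).
\]
If the orbit is periodic, I sum this over a full period. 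This gives $\sum(c-\ell)=2\sum c-\sum(c-\ell)$, hence $\sum_k\ell_k=0$. Since each $\ell_k\ge0$, every $\ell_k=0$, and so all states are monic.

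Monicity then forces $g_k=c_{k-1}$, so that $c_{k+1}=2c_k-c_{k-1}$. Thus the constant valuations form an arithmetic progression, and periodicity makes them constant. Doing this at every prime shows that all constant terms have a common absolute value $C$. This is exactly the form \eqref{eq:periodic-quadratic-form}.

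\emph{Converse and periods.} Take $A=T^2+bT+\varepsilon C$ and $B=T^2+eT+\eta C$. Then
\[
\Rcal_{\varepsilon C}B=\eta CT^2+\varepsilon eCT+C^2 .
\]
Its content is $C$, and after fixing the sign,
\[
\mathcal J(A,B)=\bigl(T^2+\varepsilon\eta\,eT+\eta C,\ A\bigr).
\]
Iterating once more gives
\[
\mathcal J^2(A,B)=\bigl(T^2+\varepsilon\eta\,bT+\varepsilon C,\ T^2+\varepsilon\eta\,eT+\eta C\bigr).
\]
If $\varepsilon=\eta$ this is the identity, so the period divides $2$. If $\varepsilon=-\eta$, then $\mathcal J^2$ negates both middle coefficients, so $\mathcal J^4=\mathrm{id}$, and $\mathcal J^2=\mathrm{id}$ exactly when $b=e=0$. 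Period $1$ is impossible in this case, because it would require $A=B$ and hence $\varepsilon=\eta$. So the period is $2$ when $b=e=0$ and $4$ otherwise.

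The main obstacle I expect is the content bookkeeping in the valuation recursion. I have to check that the removed content $g_k$ enters the leading-term and constant-term valuations identically, so that it cancels in the difference, without needing any information about the middle coefficient. The summation trick relies entirely on this cancellation.
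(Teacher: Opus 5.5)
Your proposal is correct and follows essentially the same route as the paper. The paper tracks $\Delta=(v_p(A(0))-v_p(\operatorname{lc}A))-(v_p(B(0))-v_p(\operatorname{lc}B))$, which in your notation is $D_k-D_{k-1}$ with $D_k=c_k-\ell_k$; your recursion is exactly the one-step form $\Delta_{k+1}=\Delta_k+2\ell_k$ of the paper's two-step monotonicity $\Delta'=\Delta+2\alpha+2\delta_1$, so summing over a period is the paper's observation that this nondecreasing quantity must return to its starting value. Your arithmetic-progression step and the explicit computation of $\mathcal J$ and $\mathcal J^2$ also match the paper. Two points the paper leaves implicit are made explicit in your version: the bijection is taken on pairs with positive leading coefficients, and period $1$ is excluded when $\varepsilon=-\eta$.
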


\begin{proof}
For a quadratic polynomial, $\Rcal_c^2P=c^2P$.  Taking primitive parts therefore gives
\[
  \mathcal J^{-1}(A,B)
  =\bigl(B,\operatorname{pp}(\Rcal_{B(0)}A)\bigr),
\]
so $\mathcal J$ is bijective.

Fix a prime $p$ and write
\[
  A=aT^2+bT+c,
  \qquad
  B=dT^2+eT+f,
\]
with
\[
  \alpha=v_p(a),\quad \gamma=v_p(c),\quad
  \delta=v_p(d),\quad \phi=v_p(f).
\]
Set
\[
  \rho\defeq\gamma-\alpha,
  \qquad
  \sigma\defeq\phi-\delta,
  \qquad
  \Delta\defeq\rho-\sigma.
\]
Let $C_1:=\operatorname{pp}(\Rcal_cB)$, let
$h:=v_p(\cont(\Rcal_cB))$, and put
$\delta_1:=v_p(\operatorname{lc}(C_1))=\phi-h$.
The constant coefficient of $C_1$ has valuation $\delta+2\gamma-h$, so its endpoint-valuation difference is
\[
  \theta_1
  :=v_p(C_1(0))-v_p(\operatorname{lc}(C_1))
  =2\gamma-\sigma.
\]
Its constant coefficient therefore has valuation $\delta_1+\theta_1$.  If
$C_2:=\operatorname{pp}(\Rcal_{C_1(0)}A)$, then the endpoint-valuation difference of $C_2$ is
\[
  \theta_2
  =\alpha+2(\delta_1+\theta_1)-\gamma.
\]
After two applications of $\mathcal J$ the new difference is
\[
  \Delta'
  =\theta_2-\theta_1
  =\alpha-\gamma+2\delta_1+\theta_1
  =\Delta+2\alpha+2\delta_1.
\]
Thus
\begin{equation}\label{eq:delta-monotone}
  \Delta' = \Delta+2\alpha+2\delta_1.
\end{equation}
In particular, $\Delta$ is nondecreasing along the even iterates.  On a periodic orbit it must return to its original value, so equality holds in \eqref{eq:delta-monotone} at every shifted state.  Hence every leading coefficient occurring on the orbit is a $p$-adic unit.  Since $p$ was arbitrary and the leading coefficients are positive, all of them equal $1$.

With the leading coefficients now equal to $1$, one has
$\Delta=\gamma-\phi$.  The first transformed constant has valuation $2\gamma-\phi=\gamma+\Delta$, and after the second jump the first constant has valuation $\gamma+2\Delta$; meanwhile \eqref{eq:delta-monotone} shows that $\Delta$ itself is unchanged.  Periodicity therefore forces $\Delta=0$.  Hence $v_p(|A(0)|)=v_p(|B(0)|)$ for every prime $p$, so $|A(0)|=|B(0)|=C$ and \eqref{eq:periodic-quadratic-form} follows.

For such a pair one checks directly that
\begin{equation}\label{eq:J-periodic-form}
  \mathcal J(A,B)
  =\bigl(T^2+\varepsilon\eta eT+\eta C,A\bigr).
\end{equation}
If $\varepsilon=\eta$, this merely swaps the two polynomials.  If $\varepsilon=-\eta$, two jumps negate both middle coefficients, giving the stated periods.
\end{proof}

\section{Relation-space transport and persistent saturation}\label{sec:persistent-saturation}

The preceding sections concern coefficient orbits and do not require a particular integer point to follow them.  We now return to the pointwise relations transported by a quotient jump.  We first isolate a general rational-rank obstruction for simultaneous separated relations.  It explains why enlarging the list of auxiliary relations can increase the information carried through one jump but cannot sustain a higher-rank controlled system through alternating jumps.  We then specialize to quadratic Vieta orbits and study the genuinely arithmetic phenomenon of primitive saturation.

\subsection{Transported relation spaces and rank collapse}\label{sec:relation-space-rank}

For $n,m\ge1$, let $V_{n,m}$ be the $\mathbb Q$-vector space of separated polynomials
\[
  F(X,Y)=A(X)+B(Y),
  \qquad
  \deg A\le n,\quad \deg B\le m,\quad B(0)=0.
\]
The condition $B(0)=0$ makes the decomposition unique.  Define the side projections
\begin{equation}\label{eq:relation-side-projections}
  \pi_XF\defeq A(X),
  \qquad
  \pi_YF\defeq A(0)+B(Y).
\end{equation}
Thus $\pi_YF$ is precisely the part of $F(x,y)$ visible modulo $x$.

Fix an active relation whose $X$-side is a polynomial $P(X)$ with
\[
  c\defeq P(0)\ne0,
\]
and formally write the corresponding $Y$-jump as $Y'=P(X)/Y$.  For
\[
  C(Y)=\sum_{j=0}^{m}b_jY^j
\]
define
\begin{equation}\label{eq:relation-J-map}
  J_{P,c}^{(m)}C
  \defeq
  b_m c^{m-1}P(X)
  +\sum_{j=0}^{m-1}b_jc^j(Y')^{m-j}.
\end{equation}
The transported relation operator is
\begin{equation}\label{eq:relation-transport-factorization}
  T_Y\defeq J_{P,c}^{(m)}\circ\pi_Y.
\end{equation}
At this stage $T_Y$ is a rational-linear transport operator; for general $c$ no pointwise divisibility assertion is being made.  Its direct interpretation as transport of congruence relations is recorded below under the unit-constant hypothesis $c=\pm1$.

\begin{proposition}[side-projection factorization and two-step rank collapse]\label{prop:relation-rank-collapse}
The map $J_{P,c}^{(m)}$ is injective.  Hence for every rational relation subspace $R\subseteq V_{n,m}$,
\begin{equation}\label{eq:relation-one-step-rank}
  \dim_{\mathbb Q}T_Y(R)=\dim_{\mathbb Q}\pi_Y(R)\le m+1.
\end{equation}
Moreover,
\begin{equation}\label{eq:relation-opposite-projection-rank}
  \pi_X\bigl(T_Y(R)\bigr)\subseteq\mathbb Q P,
\end{equation}
so the $X$-side rank after one $Y$-jump is at most one.  Consequently, if an admissible next $X$-jump is defined by the symmetric construction, then
\begin{equation}\label{eq:relation-two-step-collapse}
  \dim_{\mathbb Q}T_XT_Y(R)\le1.
\end{equation}
The same statement holds with $X$ and $Y$ interchanged.
\end{proposition}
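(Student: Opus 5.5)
We read the image of $J_{P,c}^{(m)}$ as separated polynomials in the new pair of variables $(X,Y')$, with the $X$-side having degree at most $\deg P=n$ and the $Y'$-side having degree at most $m$. This is consistent with $V_{n,m}$, since the $Y'$-part $\sum_{j<m}b_jc^j(Y')^{m-j}$ has no constant term, so the decomposition is the unique one. The plan is to establish three facts in order: injectivity, the rank equality together with its bound, and the $X$-side inclusion. The two-step collapse then follows formally.

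\textbf{Injectivity.} Suppose $J_{P,c}^{(m)}C=0$. The $X$-part is $b_mc^{m-1}P(X)$. Since $c\ne0$ and $P\ne0$ (because $P(0)=c\ne0$), this part vanishes only when $b_m=0$. The $Y'$-part is $\sum_{j=0}^{m-1}b_jc^j(Y')^{m-j}$. The powers $(Y')^{m-j}$ for $0\le j\le m-1$ are distinct positive powers, and each coefficient $b_jc^j$ vanishes only when $b_j=0$. Uniqueness of the separated decomposition, guaranteed by the normalization that the $Y$-side has zero constant term, lets us compare the two parts separately, so $C=0$.

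\textbf{Rank equality and bound.} With $T_Y=J\circ\pi_Y$ and $J$ injective, we get $\dim T_Y(R)=\dim\pi_Y(R)$. The image $\pi_Y(R)$ lies in the space of polynomials in $Y$ of degree at most $m$, which has dimension $m+1$. This gives \eqref{eq:relation-one-step-rank}.

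\textbf{$X$-side inclusion.} By construction, every element of the image has $X$-side $b_mc^{m-1}P$. Hence $\pi_X(T_Y(R))\subseteq\mathbb Q P$, which is \eqref{eq:relation-opposite-projection-rank}.

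\textbf{Two-step collapse.} The symmetric $X$-jump operator factors as $T_X=J'\circ\pi_X$, where $J'$ is the analogous map with the roles of the variables swapped. Therefore
\[
T_XT_Y(R)=J'\bigl(\pi_X(T_Y(R))\bigr)\subseteq J'(\mathbb Q P),
\]
and a linear map sends a space of dimension at most one to a space of dimension at most one. This proves \eqref{eq:relation-two-step-collapse}. Interchanging $X$ and $Y$ gives the symmetric statement.

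\textbf{Main obstacle.} The only delicate point is bookkeeping: the two side projections in \eqref{eq:relation-side-projections} are asymmetric, because the constant term is assigned to $\pi_X F=A(X)$ and is also included in $\pi_Y$. We must check that the symmetric construction of $T_X$ uses the matching projection, namely the one visible modulo $y$, which is exactly $\pi_X$ with constant $A(0)$. This check is what keeps the factorization $T_X=J'\circ\pi_X$ valid, and with it the collapse argument.
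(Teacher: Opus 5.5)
Your proposal is correct and takes essentially the same route as the paper. You get injectivity by separating the $X$-part $b_mc^{m-1}P(X)$ from the constant-free $Y'$-part, the rank identity from $T_Y=J_{P,c}^{(m)}\circ\pi_Y$, and the two-step collapse by factoring the symmetric $X$-transport through $\pi_X$, whose image on $T_Y(R)$ lies in $\mathbb{Q}P$. Your explicit check that $\pi_X$, which carries the constant $A(0)$, is exactly the part visible modulo the retained coordinate $y'$ is bookkeeping the paper leaves implicit in the phrase ``factors symmetrically.''
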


\begin{proof}
If $J_{P,c}^{(m)}C=0$, comparison of the coefficients of $(Y')^m,\ldots,Y'$ in \eqref{eq:relation-J-map} gives
\[
  b_0=\cdots=b_{m-1}=0,
\]
because $c\ne0$.  The remaining term is $b_mc^{m-1}P(X)$, so $b_m=0$ as well.  Thus $J_{P,c}^{(m)}$ is injective, proving \eqref{eq:relation-one-step-rank} from \eqref{eq:relation-transport-factorization}.

The $X$-part of \eqref{eq:relation-J-map} is always $b_mc^{m-1}P(X)$, which proves \eqref{eq:relation-opposite-projection-rank}.  A subsequent $X$-transport factors symmetrically through the $X$-side projection of its input space.  Its image therefore has dimension equal to that projected dimension, which is at most one by \eqref{eq:relation-opposite-projection-rank}.
\end{proof}

When $c=\pm1$, the formal operator above is also the ordinary pointwise relation transport.  Indeed, if $F(x,y)\equiv0\pmod{xy}$ and $y'=P(x)/y$ is integral, then modulo $x$ multiplication by $(y')^m$ and the congruence $yy'\equiv c$ give the reciprocal expression in \eqref{eq:relation-J-map}; modulo $y'$ every term in \eqref{eq:relation-J-map} is divisible by $y'$.  Since $P(x)\equiv c\pmod x$, one has $\gcd(x,y')=1$, and hence the transported expression is divisible by $xy'$.

\begin{remark}[saturation does not restore rational rank]\label{rem:relation-saturation-rank}
Let $E'$ be the full integral evaluation-relation lattice at the new point and let $L\subseteq E'$ be an integral transported lattice.  Its saturation inside $E'$ is
\[
  \operatorname{Sat}_{E'}(L)
  =E'\cap(L\otimes_{\mathbb Z}\mathbb Q).
\]
Therefore
\[
  \operatorname{Sat}_{E'}(L)\otimes_{\mathbb Z}\mathbb Q
  =L\otimes_{\mathbb Z}\mathbb Q.
\]
Primitive normalization can enlarge the integral lattice by finite index, but it cannot create a rational direction lost under the side projection.  In particular, the two-step bound \eqref{eq:relation-two-step-collapse} survives arbitrary integral saturation.
\end{remark}

\begin{remark}[companion-family rank preservation]\label{rem:relation-companion-rank}
If a family of relations has a common $X$-side, then its difference space has zero $X$-part, so $\pi_Y$ is injective on that difference space.  Equation~\eqref{eq:relation-one-step-rank} then shows that one reciprocal jump preserves its rational rank.  Thus the familiar rational-rank preservation for differences of companion surfaces is the injective-projection special case of Proposition~\ref{prop:relation-rank-collapse}; the new content is that alternating once more forces rank at most one.
\end{remark}

\begin{example}[sparse and full degree-$(5,3)$ supports]\label{ex:relation-rank-53}
Take the active relation
\[
  F_*(X,Y)=3X^5+X+Y^3+1,
  \qquad
  P(X)=3X^5+X+1,
\]
so $c=1$.  On the sparse basis $(X^5,X,Y^3,1)$, a coefficient vector $(\alpha,\beta,\gamma,\delta)$ is transported by
\begin{equation}\label{eq:relation-sparse-map}
  (\alpha,\beta,\gamma,\delta)
  \longmapsto
  (3\gamma,\gamma,\delta,\gamma),
\end{equation}
which has rank $2$; the active vector $(3,1,1,1)$ is fixed.  If instead the full reciprocal-closed degree-$(5,3)$ support
\[
  (X^5,X^4,X^3,X^2,X,Y^3,Y^2,Y,1)
\]
is used, then
\begin{equation}\label{eq:relation-full-map}
  (a_5,a_4,a_3,a_2,a_1,b_3,b_2,b_1,d)
  \longmapsto
  (3b_3,0,0,0,b_3,d,b_1,b_2,b_3),
\end{equation}
which has rank $4$.  Thus filling the cubic support increases the one-step information rank from $2$ to $4$, exactly as \eqref{eq:relation-one-step-rank} predicts.  Nevertheless Proposition~\ref{prop:relation-rank-collapse} forces every subsequent alternating $X$-transport of either image to have rational rank at most $1$.
\end{example}

\subsection{Local saturation on quadratic Vieta orbits}\label{sec:local-quadratic-saturation}

We now specialize from this rational rank obstruction to integral saturation along a single quadratic Vieta orbit.  The terminology elliptic, parabolic, and hyperbolic below refers to the linear part of the \emph{point} recurrence, and should not be confused with the degree-product trichotomy of Section~\ref{sec:iterated-gauge}.

Let
\[
  P(T)=T^2+pT+c\in\Z[T],
  \qquad c\ne0,
\]
and fix an integer $K$.  Suppose that a nonzero integer point $(x,y)$ lies on
\begin{equation}\label{eq:ps-base-conic}
  P(x)+P(y)-c=Kxy.
\end{equation}
The quotient jump is integral and replaces $y$ by
\begin{equation}\label{eq:ps-vieta-jump}
  y'\defeq\frac{P(x)}y=Kx-p-y.
\end{equation}
Thus the homogeneous part of the point recurrence has transfer matrix
\[
  V_K=\begin{pmatrix}K&-1\\1&0\end{pmatrix},
\]
with the usual elliptic, parabolic, and hyperbolic regimes $|K|<2$, $|K|=2$, and $|K|>2$.

For integers $u,v$, consider an auxiliary common-constant polynomial
\begin{equation}\label{eq:ps-auxiliary-polynomial}
  \widetilde P(T)=P(T)+T(uT+v).
\end{equation}
The auxiliary surface $P(x)+\widetilde P(y)-c=\widetilde Kxy$ has an
integer level $\widetilde K=K+(uy+v)/x$ precisely when
\begin{equation}\label{eq:ps-source-condition}
  x\mid uy+v.
\end{equation}
For all auxiliary relations below, use degree-two reciprocal transport
even when $u=-1$ lowers the actual degree:
\[
  \widetilde C(T)\defeq c^{-1}T^2\widetilde P(c/T)
  =T^2+(p+v)T+(1+u)c,
\]
so that
\begin{equation}\label{eq:ps-transformed-difference}
  \widetilde C(T)-P(T)=vT+uc.
\end{equation}
Suppose $vT+uc=hG(T)$ with $h>1$ and $G\in\mathbb Z[T]$.  The source condition gives $x\mid hG(y')$, since $vy'+uc\equiv y'(uy+v)\equiv0\pmod x$ by $yy'=P(x)\equiv c\pmod x$.  If in fact
\begin{equation}\label{eq:ps-saturation}
  x\mid G(y'),
\end{equation}
we say that the jump exhibits \emph{$h$-saturation}; it is genuinely nonunit when $\gcd(x,h)>1$.

The amount of cancellation at one prime has a simple rank-two lattice description.  Fix a prime $\ell$, put
\[
  e\defeq v_\ell(x),
\]
and define the local relation lattice
\begin{equation}\label{eq:ps-relation-lattice}
  L_{\ell,e}(y)
  \defeq
  \{(a,b)\in\mathbb Z_\ell^2: ay+b\equiv0\pmod{\ell^e}\}.
\end{equation}
Thus
\[
  L_{\ell,e}(y)
  =\mathbb Z_\ell(1,-y)+\mathbb Z_\ell(0,\ell^e),
  \qquad
  [\mathbb Z_\ell^2:L_{\ell,e}(y)]=\ell^e.
\]
For a primitive vector $z=(a,b)\in\mathbb Z_\ell^2$, write
\begin{equation}\label{eq:ps-tent-depth}
  \lambda_{\ell,e,y}([a:b])
  \defeq
  \max\{0,e-v_\ell(ay+b)\}.
\end{equation}
Here $v_\ell(0)=+\infty$.  The quantity in \eqref{eq:ps-tent-depth} is the least $k\ge0$ for which $\ell^kz\in L_{\ell,e}(y)$; equivalently it is the depth defect of the projective direction $[a:b]$ from the root ball centered at $[1:-y]$ modulo $\ell^e$.

\begin{proposition}[exact local cancellation depth]\label{prop:ps-local-cancellation}
Assume the source condition \eqref{eq:ps-source-condition} and $(u,v)\ne(0,0)$.  Let
\[
  T_c(a,b)\defeq(b,ca),
\]
and write
\begin{equation}\label{eq:ps-raw-factorization}
  T_c(u,v)=(v,cu)=\ell^s z,
\end{equation}
where $z=(a,b)$ is primitive over $\mathbb Z_\ell$.  Then
\[
  T_cL_{\ell,e}(y)\subseteq L_{\ell,e}(y')
\]
and the largest exponent $r$, $0\le r\le s$, for which one may cancel $\ell^r$ from the raw transformed relation while retaining a valid relation modulo $x$ is
\begin{equation}\label{eq:ps-max-cancellation}
  r_{\max}
  =s-\lambda_{\ell,e,y'}([z]).
\end{equation}
In particular, the entire $\ell$-part of the raw content can be cancelled if and only if
\begin{equation}\label{eq:ps-full-local-cancellation}
  \lambda_{\ell,e,y'}([z])=0,
\end{equation}
that is, if and only if $ay'+b\equiv0\pmod{\ell^e}$.
\end{proposition}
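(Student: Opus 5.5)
The plan is to work entirely in $\mathbb Z_\ell$, using only the congruence $yy'\equiv c\pmod{\ell^e}$. This congruence holds because $yy'=P(x)\equiv c\pmod x$ and $\ell^e\mid x$. We first prove the lattice inclusion $T_cL_{\ell,e}(y)\subseteq L_{\ell,e}(y')$. For $(a,b)\in L_{\ell,e}(y)$ the transformed vector is $(b,ca)$, and its linear form at $y'$ is $by'+ca$. Reducing modulo $\ell^e$ and substituting $c\equiv yy'$ gives
\[
 by'+ca\equiv by'+ayy'=y'(ay+b)\equiv0\pmod{\ell^e}.
\]
This is the same computation used just before the proposition to show $x\mid hG(y')$.

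The source condition $x\mid uy+v$ says $(u,v)\in L_{\ell,e}(y)$. Hence the raw transformed vector $T_c(u,v)=(v,cu)=\ell^sz$ lies in $L_{\ell,e}(y')$, so $\ell^s z$ gives a valid relation modulo $\ell^e$, i.e.\ $\ell^s(ay'+b)\equiv0$. The condition that cancelling $\ell^r$ still leaves a valid relation modulo $x$ means, locally at $\ell$, that $\ell^{s-r}z\in L_{\ell,e}(y')$. The cofactor $x/\ell^e$ is prime to $\ell$, and there the relation is untouched by a power of $\ell$. So I would first make this local reduction precise: cancelling $\ell^r$ from a relation divisible by $x$ preserves divisibility at primes $\ne\ell$, because $\ell^r$ is a unit there. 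Only the $\ell$-adic condition remains.

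By the characterization recorded after \eqref{eq:ps-tent-depth}, $\ell^kz\in L_{\ell,e}(y')$ holds iff $k\ge\lambda_{\ell,e,y'}([z])$. This is immediate from $v_\ell(\ell^k(ay'+b))=k+v_\ell(ay'+b)\ge e$. So the admissible $r$ are exactly those with $s-r\ge\lambda$. The inclusion of the previous paragraph shows $\lambda\le s$, so $r_{\max}=s-\lambda_{\ell,e,y'}([z])\in[0,s]$. The full-cancellation criterion is the case $r=s$, i.e.\ $\lambda=0$, equivalently $\ell^e\mid ay'+b$.

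The main obstacle is definitional rather than computational. One must identify ``valid relation modulo $x$'' for the cancelled polynomial with membership of the coefficient vector in $L_{\ell,e}(y')$ after localization. The ordering convention must also be checked: the linear relation $vT+uc$ evaluated at $y'$ matches the pairing $(a,b)\mapsto ay'+b$ with $(a,b)=(v,cu)/\ell^s$. I would state this identification explicitly before invoking the depth formula. The case $z$ with $ay'+b=0$ exactly is covered by the convention $v_\ell(0)=+\infty$.
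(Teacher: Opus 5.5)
Your proposal is correct and follows the paper's proof essentially step for step. The congruence $yy'\equiv c$ gives the lattice inclusion; membership of $\ell^{s-r}z$ in $L_{\ell,e}(y')$ is equivalent to $s-r\ge\lambda_{\ell,e,y'}([z])$; and the uncancelled case $r=0$ bounds $\lambda$ by $s$. Two points you add are left implicit in the paper: the inclusion for a general vector of $L_{\ell,e}(y)$ (the paper writes the computation only for $(u,v)$), and the remark that cancelling $\ell^r$ does not affect divisibility by the prime-to-$\ell$ part of $x$.
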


\begin{proof}
Since $yy'=P(x)\equiv c\pmod{x}$, one has
\[
  vy'+cu\equiv y'(uy+v)\equiv0\pmod{\ell^e},
\]
which proves the lattice inclusion.  After cancellation of $\ell^r$, the transformed coefficient vector is $\ell^{s-r}z$.  By \eqref{eq:ps-tent-depth}, this vector belongs to $L_{\ell,e}(y')$ if and only if
\[
  s-r\ge\lambda_{\ell,e,y'}([z]).
\]
The uncancelled vector with $r=0$ is already in the target lattice, so the right-hand side is at most $s$.  Maximizing $r$ gives \eqref{eq:ps-max-cancellation}.
\end{proof}

The projective transport induced by $T_c$ is especially rigid:
\begin{equation}\label{eq:ps-projective-involution}
  F_c([a:b])\defeq[b:ca],
  \qquad
  F_c^2=\operatorname{id},
\end{equation}
because $T_c^2=cI$.  Thus an iteratively transported auxiliary direction alternates between at most two projective directions.  In the fixed-auxiliary problem below the source direction is constant, its single image under $F_c$ is constant, and only the root balls and their depths vary around the point orbit.

\begin{corollary}[finite criterion on a periodic Vieta orbit]\label{cor:ps-periodic-finite-test}
Let $(x_n)_{n\in\mathbb Z}$ be a nonzero integer Vieta orbit of period $N$, with the $n$th jump based at $(x_n,x_{n-1})$ and ending at $x_{n+1}$.  Fix a nonzero auxiliary relation vector $q=(u,v)$.  For a prime $\ell$, write
\[
  q=\ell^t z^-,
  \qquad
  T_cq=\ell^s z^+,
\]
with $z^-,z^+$ primitive, and put $e_n=v_\ell(x_n)$.  Then the same auxiliary surface is valid at every point of the periodic orbit if and only if
\begin{equation}\label{eq:ps-periodic-source-test}
  \lambda_{\ell,e_n,x_{n-1}}([z^-])\le t
  \qquad(0\le n<N)
\end{equation}
for every prime $\ell$.  The entire local content of the transformed relation can be cancelled at every jump if and only if, in addition,
\begin{equation}\label{eq:ps-periodic-target-test}
  \lambda_{\ell,e_n,x_{n+1}}([z^+])=0
  \qquad(0\le n<N)
\end{equation}
for every $\ell$.  Hence indefinite full saturation on a periodic point orbit is a finite intersection of explicit $\ell$-adic projective-ball conditions.
\end{corollary}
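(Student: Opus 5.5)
The plan is to reduce both assertions to the one-jump local analysis of Proposition~\ref{prop:ps-local-cancellation}, applied prime by prime at each jump of the orbit, and then to use periodicity to cut the infinitely many jumps down to $N$ of them.

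\emph{Source test.} At the $n$th jump the base point is $(x_n,x_{n-1})$. The auxiliary surface is valid there exactly when the source condition \eqref{eq:ps-source-condition} holds, namely $x_n\mid ux_{n-1}+v$. Factor over primes: this holds if and only if $v_\ell(ux_{n-1}+v)\ge e_n$ for every $\ell$. Write $q=\ell^tz^-$ with $z^-=(a,b)$ primitive. Then $ux_{n-1}+v=\ell^t(ax_{n-1}+b)$, so the local condition reads $t+v_\ell(ax_{n-1}+b)\ge e_n$. This is equivalent to $\max\{0,e_n-v_\ell(ax_{n-1}+b)\}\le t$, which is exactly \eqref{eq:ps-periodic-source-test} by definition \eqref{eq:ps-tent-depth}. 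The convention $v_\ell(0)=+\infty$ covers vanishing values. If $\ell\nmid x_n$, then $e_n=0$ and the condition is automatic, so only finitely many primes matter.

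\emph{Target test.} Assume the source condition holds at jump $n$, and apply Proposition~\ref{prop:ps-local-cancellation} with $x=x_n$, $y=x_{n-1}$ and $y'=x_{n+1}$. The full local content $\ell^s$ of $T_cq$ can be cancelled if and only if $\lambda_{\ell,e_n,x_{n+1}}([z^+])=0$. This is \eqref{eq:ps-periodic-target-test}. Cancelling the entire content at every prime simultaneously means exactly that all these local conditions hold, since a congruence modulo $x_n$ is the conjunction of its $\ell$-primary parts.

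\emph{Reduction to one period.} Periodicity gives $x_{n+N}=x_n$ for all $n\in\Z$. Hence the triples $(e_n,x_{n-1},x_{n+1})$, and therefore both tests, depend only on $n\bmod N$. The infinitely many jump conditions thus reduce to $0\le n<N$.

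The one point that needs care is that the target test is stated "in addition" to the source test. This is because Proposition~\ref{prop:ps-local-cancellation} requires the source condition in order to place $T_cq$ in the target lattice. The step I expect to need the most attention is checking that the auxiliary vector $q$ is the same at every jump, so that $z^\pm$, $t$ and $s$ do not depend on $n$. This holds because the problem fixes the auxiliary surface, and the involution \eqref{eq:ps-projective-involution} shows that only the root balls vary, not the directions.
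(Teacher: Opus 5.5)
Your proposal is correct and follows the paper's own argument. The source test is the membership $q\in L_{\ell,e_n}(x_{n-1})$, which you unpack explicitly via valuations; the target test is Proposition~\ref{prop:ps-local-cancellation} applied at each jump; and periodicity reduces everything to $0\le n<N$, with your added observations (only primes dividing some $x_n$ matter, and $t,s,z^{\pm}$ do not depend on $n$) being correct details the paper leaves implicit.
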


\begin{proof}
Condition \eqref{eq:ps-periodic-source-test} is exactly the statement that $q\in L_{\ell,e_n}(x_{n-1})$ for every source point.  Proposition~\ref{prop:ps-local-cancellation} gives \eqref{eq:ps-periodic-target-test} for full cancellation at the corresponding target.  Periodicity reduces both families to the displayed finite range.
\end{proof}

\subsection{Persistence across point-orbit regimes}\label{sec:persistence-regimes}

The six-cycle used below can be classified completely rather than by exhibiting one auxiliary relation.

\begin{proposition}[all fully saturated auxiliary relations on the irreducible six-cycle]\label{prop:ps-six-cycle-classification}
Let $r\ge2$, let
\[
  P_r(T)=T^2-3r^2,
\]
and let $a_0=1$, $a_1=2$, $a_{n+1}=a_n-a_{n-1}$, so that
\[
  1,2,1,-1,-2,-1,1,\ldots
\]
is six-periodic.  Put
\[
  (x_n,y_n)=(ra_n,ra_{n-1}).
\]
Consider a nonzero fixed auxiliary difference $uT+v$ and
\[
  \widetilde P(T)=P_r(T)+T(uT+v).
\]
Then this auxiliary surface is valid at all six source points only if $v=rw$ for some $w\in\mathbb Z$, and in that notation it is valid at all six points if and only if
\begin{equation}\label{eq:ps-six-source-parity}
  u\equiv w\pmod2.
\end{equation}
Let
\[
  d\defeq\gcd(w,3u)>0.
\]
The transformed relation can be divided by its full content and remains valid at every jump of the six-cycle if and only if, in addition to \eqref{eq:ps-six-source-parity},
\begin{equation}\label{eq:ps-six-classification}
  \gcd\left(\frac wd,r\right)=1,
  \qquad
  \frac{w+3u}{d}\equiv0\pmod2.
\end{equation}
Under these conditions the full content is $rd$, and the primitive transformed relation is
\begin{equation}\label{eq:ps-six-primitive-relation}
  G(T)=\frac wdT-\frac{3ru}{d}.
\end{equation}
The auxiliary polynomial itself is primitive precisely when
\begin{equation}\label{eq:ps-six-aux-primitivity}
  \gcd(1+u,rw,3r^2)=1.
\end{equation}
\end{proposition}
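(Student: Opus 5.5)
The proof is a direct local computation on the six points of the orbit. First we pin down the Vieta data. With $P_r(T)=T^2-3r^2$ we have $p=0$ and $c=-3r^2$. The starting point $(x_1,y_1)=(2r,r)$ gives $P_r(2r)+P_r(r)-c=2r^2=K\cdot 2r^2$, so $K=1$. The jump \eqref{eq:ps-vieta-jump} is therefore $y'=x-y$, which is exactly the recurrence $a_{n+1}=a_n-a_{n-1}$. So the $n$th jump has source $(x_n,y_n)=(ra_n,ra_{n-1})$ and target $y'=x_{n+1}=ra_{n+1}$. We then sort the six indices by the value of $a_n\in\{\pm1,\pm2\}$. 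When $a_n=\pm2$, both neighbours satisfy $a_{n\pm1}=\pm1$; when $a_n=\pm1$, the next term satisfies $a_{n+1}\in\{\pm1,\pm2\}$.

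\emph{Source condition.} Condition \eqref{eq:ps-source-condition} reads $ra_n\mid ura_{n-1}+v$. At an index with $|a_n|=1$ this says $r\mid v$; conversely this is forced, so we write $v=rw$. The condition then becomes $a_n\mid ua_{n-1}+w$. This is automatic when $|a_n|=1$. When $|a_n|=2$ we have $a_{n-1}=\pm1$, so it reduces to $u\pm w$ being even, which is the parity condition \eqref{eq:ps-six-source-parity}.

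\emph{Transformed relation and content.} By \eqref{eq:ps-transformed-difference} the raw difference is
\[
vT+uc=r(wT-3ru),
\]
whose content is $r\,g'$ with $g'\defeq\gcd(w,3ru)$. Put $d=\gcd(w,3u)$. Since $\gcd(w/d,3u/d)=1$, we get
\[
g'=d\gcd(w/d,\,3ur/d)=d\gcd(w/d,r).
\]
After dividing by the full content, $G(T)=(wT-3ru)/g'$, and
\[
G(ra_{n+1})=\frac{r\,(wa_{n+1}-3u)}{g'}.
\]
Full saturation at the $n$th jump is the condition $ra_n\mid G(ra_{n+1})$.

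\emph{Indices with $|a_n|=1$.} Here the condition is $g'\mid wa_{n+1}-3u$. Since $g'\mid w$, this is equivalent to $g'\mid 3u$, i.e.\ $g'=d$. By the formula above, $g'=d$ holds exactly when $\gcd(w/d,r)=1$, which is the first condition in \eqref{eq:ps-six-classification}. This also shows that the full content is $rd$ and that $G$ is the polynomial \eqref{eq:ps-six-primitive-relation}.

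\emph{Indices with $|a_n|=2$.} Here $a_{n+1}=\pm1$, and the condition is $2\mid(\pm w-3u)/d$. Since $(w-3u)/d$ and $(w+3u)/d$ differ by $6u/d$, which is even, both signs give the same parity. This yields the second condition in \eqref{eq:ps-six-classification}.

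\emph{Main obstacle.} The only real step is the content bookkeeping: identifying $\gcd(w,3ru)$ with $d\gcd(w/d,r)$, and checking that at the unit-value indices divisibility by $g'$ holds exactly when $g'=d$. The care needed is in the exact correspondence between the six indices and the pairs $(a_n,a_{n+1})$ they produce.

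\emph{Primitivity of the auxiliary polynomial.} This is immediate. We have $\widetilde P(T)=(1+u)T^2+rwT-3r^2$, so $\widetilde P$ is primitive exactly when \eqref{eq:ps-six-aux-primitivity} holds.
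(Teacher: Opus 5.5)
Your proposal is correct and follows essentially the same route as the paper. Both arguments force $r\mid v$ at the unit positions and reduce the source condition to parity at $a_n=\pm2$. They then compute the content as $rd\gcd(w/d,r)$ and read off the two conditions in \eqref{eq:ps-six-classification} from the positions $|a_n|=1$ and $|a_n|=2$, respectively.

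Your write-up is slightly more careful than the paper's on two points. You check explicitly that $K=1$ and $y'=x-y$, which the paper leaves implicit. You also justify the parity equivalence by the even difference $6u/d$, whereas the paper cites $2w/d$, which is actually the sum of the two quantities.
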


\begin{proof}
The source condition is
\[
  ra_n\mid ur a_{n-1}+v.
\]
The four indices with $a_n=\pm1$ force $r\mid v$; write $v=rw$.  After division by $r$, the only nontrivial conditions occur at $a_n=\pm2$ and both reduce to \eqref{eq:ps-six-source-parity}.

The raw transformed difference is
\[
  rwT-3r^2u
  =r\bigl(wT-3ru\bigr).
\]
Write $w=dw_0$ and $3u=du_0$, with $\gcd(w_0,u_0)=1$.  Its content is
\[
  r\gcd(w,3ru)
  =rd\gcd(w_0,r).
\]
For the primitive transformed relation to remain divisible by every source modulus, it must in particular remain divisible by $r$ at the four positions $|a_n|=1$.  Since $\gcd(w_0,u_0)=1$, this is equivalent to $\gcd(w_0,r)=1$, which is the first condition in \eqref{eq:ps-six-classification}.  The full content is then $rd$ and the primitive relation is \eqref{eq:ps-six-primitive-relation}.

Evaluating at the next coordinate gives
\[
  G(ra_{n+1})
  =r\frac{wa_{n+1}-3u}{d}.
\]
Again the four positions $a_n=\pm1$ are automatic.  At the two positions $a_n=\pm2$ the conditions are
\[
  \frac{w-3u}{d}\equiv0\pmod2,
  \qquad
  \frac{w+3u}{d}\equiv0\pmod2.
\]
They are equivalent because their difference is $2w/d$.  This proves the second condition in \eqref{eq:ps-six-classification}.  Finally, \eqref{eq:ps-six-aux-primitivity} is the coefficient gcd of
\[
  \widetilde P(T)=(1+u)T^2+rwT-3r^2.
\]
\end{proof}

The following theorem places three persistence mechanisms side by side.  Complete constructions and proofs are given in Appendix~\ref{app:persistence-proofs}.

\begin{theorem}[persistent saturation across point-orbit regimes]\label{thm:persistent-saturation-regimes}
The quadratic Vieta dynamics admits the following three types of genuinely nonunit saturation.
\begin{enumerate}[label=\textup{(\alph*)}]
\item \textbf{Periodic elliptic persistence.}
For every integer $r\ge2$ with $3\nmid r$, the irreducible conic
\[
  x^2+y^2-xy-3r^2=0
\]
has a six-step integer Vieta orbit $(x_n,y_n)=(ra_n,ra_{n-1})$, where
\[
  a_0=1,\qquad a_1=2,\qquad a_{n+1}=a_n-a_{n-1}.
\]
Proposition~\ref{prop:ps-six-cycle-classification} classifies all fixed auxiliary relations that fully saturate on this cycle.  In particular, taking $u=w=2$ gives
\[
  P_r(T)=T^2-3r^2,
  \qquad
  \widetilde P_r(T)=3T^2+2rT-3r^2,
\]
with normalized transformed difference
\[
  2r(T-3r).
\]
Full saturation persists at every jump, the removed content $2r$ is a nonunit at every source modulus, and the assumption $3\nmid r$ is exactly what makes this particular auxiliary polynomial primitive.

\item \textbf{Arbitrarily long parabolic persistence.}
For every $L\ge1$ there exist a nonsingular irreducible conic of the form
\[
  P(x)+P(y)-P(0)=2xy
\]
with an unbounded nonperiodic integer Vieta orbit and one fixed auxiliary quadratic polynomial that gives genuine nonunit $2$-saturation on $L$ consecutive jumps.

\item \textbf{Arbitrarily long persistence on one fixed hyperbolic orbit.}
Let
\[
  P(T)=T^2-45,
  \qquad
  x^2+y^2-45=3xy,
\]
and let $w_j=L_{2j+1}=1,4,11,29,76,\ldots$ be the odd-index Lucas subsequence, equivalently
\[
  w_{j+1}=3w_j-w_{j-1}.
\]
The points
\[
  (x_j,y_j)=(3w_j,3w_{j-1})
\]
form a fixed unbounded nonperiodic integer Vieta orbit.  For every $L\ge1$ there is an integer $a=a_L$ such that the single auxiliary polynomial
\[
  \widetilde P_a(T)=(1+a)T^2+3T-45
\]
gives genuine nonunit $3$-saturation at each of the first $L$ jumps.  Its normalized transformed difference is
\[
  3(T-15a).
\]
No fixed $a$ in this family gives saturation at infinitely many jumps of this orbit.
\end{enumerate}
\end{theorem}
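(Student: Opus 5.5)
The plan is to handle the three regimes separately. Each rests on the local machinery already set up: the source condition \eqref{eq:ps-source-condition}, the transformed difference \eqref{eq:ps-transformed-difference}, and the exact cancellation criterion of Proposition~\ref{prop:ps-local-cancellation}.

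\textbf{Part (a).} This is essentially a specialization of Proposition~\ref{prop:ps-six-cycle-classification}.

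First I will check that $(ra_n,ra_{n-1})$ lies on $x^2+y^2-xy=3r^2$. This holds because $a_n^2-a_na_{n-1}+a_{n-1}^2=3$ is preserved by $a_{n+1}=a_n-a_{n-1}$ and holds at $(2,1)$. The Vieta jump \eqref{eq:ps-vieta-jump} with $K=1$, $p=0$ is exactly this recurrence.

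Next I take $u=w=2$. Then \eqref{eq:ps-six-source-parity} holds, and $d=\gcd(2,6)=2$. The conditions \eqref{eq:ps-six-classification} hold because $\gcd(1,r)=1$ and $(2+6)/2=4$ is even.

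This gives content $rd=2r$ and $G(T)=T-3r$, so the normalized difference is $2r(T-3r)$. The removed content is a nonunit at every source modulus $r|a_n|$, since $r\ge2$.

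Primitivity follows from \eqref{eq:ps-six-aux-primitivity}, which reads $\gcd(3,2r,3r^2)=1$. This holds iff $3\nmid r$.

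\textbf{Part (c).} Here everything is explicit.

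I first verify that the orbit lies on the conic. This follows from the Lucas identity $w_j^2+w_{j-1}^2-3w_jw_{j-1}=5$ after scaling by $9$. The jump $y'=3x-y$ matches $w_{j+1}=3w_j-w_{j-1}$.

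For $\widetilde P_a$ we have $u=a$ and $v=3$. The source condition $3w_j\mid 3aw_{j-1}+3$ reduces to
\[
  aw_{j-1}\equiv-1\pmod{w_j}.
\]
The transformed difference is $3T-45a=3(T-15a)$. Saturation at the jump is $x_j\mid G(y')$, i.e.
\[
  3w_j\mid 3w_{j+1}-15a,\qquad\text{i.e.}\qquad w_j\mid w_{j+1}-5a.
\]
Because $\gcd(w_{j-1},w_j)=1$, the source condition determines $a$ modulo $w_j$. Using $w_{j+1}\equiv -w_{j-1}\pmod{w_j}$, the target condition becomes $5a\equiv -w_{j-1}$. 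Combining with $aw_{j-1}\equiv-1$ gives the consistency condition $w_{j-1}^2\equiv5\pmod{w_j}$, which is exactly the Lucas identity. So both conditions at jump $j$ are equivalent to
\[
  a\equiv -w_{j-1}^{-1}\pmod{w_j}.
\]
The odd-index Lucas numbers are pairwise coprime, since $\gcd(L_{2i+1},L_{2k+1})=L_{\gcd}$-type and equal to $1$ here; $5\nmid w_j$ should also be checked. The Chinese remainder theorem then yields $a_L$ valid for $j\le L$.

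The removed factor $3$ divides $x_j=3w_j$, so the saturation is genuinely nonunit.

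For non-persistence: infinitely many jumps would force $a$ into congruences modulo an unbounded product of the $w_j$. I would argue from $w_j\mid aw_{j-1}+1$ with $w_j>|a|w_{j-1}+1$ for large $j$ that $aw_{j-1}+1=0$, which is impossible. Since $w_j/w_{j-1}\to\varphi^2<|a|$ can fail, I would instead use $w_j\mid 5a+w_{j-1}$: once $w_j>5|a|+w_{j-1}$ this forces $5a=-w_{j-1}$, and that can hold for at most one $j$. Since $w_j\approx2.6\,w_{j-1}$, the inequality $w_j>w_{j-1}+5|a|$ eventually holds, so this route works.

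\textbf{Part (b).} I expect this to be the main obstacle, because a family must be constructed.

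I would take $P(T)=T^2+pT+c$ with $K=2$. The point recurrence is then $x_{n+1}=2x_n-x_{n-1}-p$, so $x_n$ is a quadratic polynomial in $n$, unbounded and nonperiodic when $p\ne0$. Nonsingularity and irreducibility reduce to a discriminant condition on $p,c$.

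I would choose the orbit $x_n=2f(n)$ with $f$ an integer quadratic. This makes $p$ even and lets $v$ be even, so that $h=2$ divides $(v,uc)$. I would then impose source and target conditions
\[
  x_n\mid ux_{n-1}+v,\qquad x_n\mid \tfrac{v}{2}x_{n+1}+\tfrac{uc}{2}
\]
for $n=1,\dots,L$. Choosing the values $x_n$ pairwise coprime apart from the factor $2$ (for instance $f(n)$ odd and mutually coprime on the window, by shifting the starting index), CRT again produces $u,v$ satisfying all of them.

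The delicate points are compatibility of the two congruences at each prime, mirroring the Lucas identity $w_{j-1}^2\equiv5$ of part (c), and ensuring the $2$-part is genuinely cancellable, i.e.\ $\lambda=0$ in \eqref{eq:ps-full-local-cancellation} at $\ell=2$. I would verify the compatibility via the conic relation $x_{n-1}x_{n+1}\equiv c\pmod{x_n}$, exactly as in (c).
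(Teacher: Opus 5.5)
Part (a) is correct and follows the paper. Part (c) has a genuine gap, and part (b) is only an outline whose key step would fail.

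\textbf{Part (c).} Your Chinese-remainder step rests on the claim that the $w_j=L_{2j+1}$ are pairwise coprime, and that claim is false. For odd $M,N$ one has $\gcd(L_M,L_N)=L_{\gcd(M,N)}$, which equals $1$ only when $\gcd(M,N)=1$. For instance, $w_1=L_3=4$ divides $w_4=L_9=76$, and $w_2=L_5=11$ divides $w_7=L_{15}=1364$. So once $L\ge4$ the moduli in $a w_{j-1}\equiv-1\pmod{w_j}$ are not coprime.

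You must therefore check the compatibility conditions of the generalized CRT, namely $w_{m-1}\equiv w_{n-1}\pmod{\gcd(w_m,w_n)}$ for $m<n$. These do hold, but this is the nontrivial input:
\begin{itemize}
\item with $d=\gcd(2m+1,2n+1)$, the gcd is $L_d$;
\item the index difference $2(n-m)$ is a multiple of $2d$, because $(2m+1)/d$ and $(2n+1)/d$ are both odd;
\item the addition identity $L_{r+2d}=L_dL_{r+d}-(-1)^dL_r$ gives $L_{r+2d}\equiv L_r\pmod{L_d}$ for odd $d$.
\end{itemize}
This is exactly the content of Lemma~\ref{lem:app-lucas-crt}, phrased there for the equivalent system $5a\equiv w_{j+1}\pmod{w_j}$.

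The rest of your (c) is sound. Reducing source and target at each jump to one congruence via $w_{j-1}^2\equiv5\pmod{w_j}$ is right. Your size argument from $w_j\mid 5a+w_{j-1}$ is a valid, more elementary substitute for the paper's irrational-limit argument against infinite persistence.

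\textbf{Part (b).} The one concrete device you offer for the CRT moduli does not work. Shifting the starting index of a fixed quadratic $f$ cannot make $f(1),\dots,f(L)$ pairwise coprime for large $L$. Translation does not change the set of primes $q$ modulo which $f$ has a root, and every nonconstant $f$ has such primes. Any window of length at least $2q$ then contains two values divisible by $q$; for $f(n)=n^2+n+1$, already $3\mid f(1)$ and $3\mid f(4)$. The residues in $v/2\equiv-uf(n-1)\pmod{f(n)}$ are not automatically compatible modulo such common factors, so your CRT for $v$ is unjustified.

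The missing idea is to let the constant term of $f$ depend on $L$. Since $\gcd(f(m),f(n))$ divides $f(n)-f(m)$, only primes up to about $2L$ can be shared. Choose the constant term by CRT so that $f$ has no root modulo any of those odd primes, and control the $2$-part separately. The paper does exactly this with $D_n=n^2+S$, where $S\equiv2\pmod4$ and $-S$ is a nonresidue modulo every odd prime dividing some $m^2-n^2$. This gives $\gcd(D_m,D_n)\le2$, and then $W\equiv4n-2\pmod{D_n}$ is solvable because these residues differ by multiples of $4$.

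With such a choice your scheme does close:
\begin{itemize}
\item fix $u$ and solve for $v/2$ by CRT;
\item the odd part of the target condition follows from $x_{n-1}x_{n+1}\equiv c\pmod{x_n}$, as you indicate;
\item for $f(n)$ odd, the $2$-adic part is automatic, since $v$ and $x_{n+1}$ are even and the conic with even orbit and even $p$ forces $4\mid c$.
\end{itemize}
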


\begin{example}[three-step hyperbolic persistence]\label{ex:ps-hyperbolic-three}
For $L=3$ in Theorem~\ref{thm:persistent-saturation-regimes}\textup{(c)}, one may take $a=195$.  Then
\[
  \widetilde P(T)=196T^2+3T-45,
  \qquad
  \widetilde C(T)-P(T)=3(T-2925).
\]
The first three source moduli are $12,33,87$, and the corresponding next coordinates are $33,87,228$.  After removing the nonunit factor $3$, the primitive factor $G(T)=T-2925$ satisfies
\[
  G(33)=-2892=-241\cdot12,
  \qquad
  G(87)=-2838=-86\cdot33,
  \qquad
  G(228)=-2697=-31\cdot87.
\]
Thus the strengthened relation persists for these three successive jumps on the same hyperbolic orbit.  The theorem shows that the window can be made arbitrarily long, while no fixed choice of $a$ persists indefinitely.
\end{example}

\begin{corollary}[no finite persistence certificate on the fixed hyperbolic orbit]\label{cor:no-finite-saturation-horizon}
There is no integer $H$ such that success of one auxiliary relation through the first $H$ jumps of the fixed hyperbolic orbit in Theorem~\ref{thm:persistent-saturation-regimes}\textup{(c)} forces that relation to persist indefinitely.
\end{corollary}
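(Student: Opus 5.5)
The plan is to derive the corollary directly from Theorem~\ref{thm:persistent-saturation-regimes}\textup{(c)} by contradiction. Suppose an integer $H$ had the stated property: success of an auxiliary relation through the first $H$ jumps of the fixed hyperbolic orbit $(x_j,y_j)=(3w_j,3w_{j-1})$ on $x^2+y^2-45=3xy$ forces persistence at every later jump. We may take $H\ge1$; if $H\le0$ the hypothesis is vacuous, and the argument below with $L=1$ already gives a contradiction.

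The first step applies part~(c) with $L=H$. This produces an integer $a=a_H$ such that the single auxiliary polynomial $\widetilde P_a(T)=(1+a)T^2+3T-45$ gives genuine nonunit $3$-saturation at each of the first $H$ jumps. In the notation of \eqref{eq:ps-auxiliary-polynomial}, this is $u=a$, $v=3$, with normalized transformed difference $3(T-15a)$.

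The second step uses the last sentence of part~(c): no fixed $a$ in this family saturates at infinitely many jumps of the orbit. So this particular relation $\widetilde P_{a_H}$ fails at some later jump. It therefore succeeds through the first $H$ jumps yet does not persist indefinitely, contradicting the assumed property of $H$. Since $H$ was arbitrary, no such horizon exists.

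The only care needed is interpretive. "Success" in the corollary should mean the same thing as in the theorem: the source condition \eqref{eq:ps-source-condition} together with full $3$-saturation \eqref{eq:ps-saturation} at each jump. The quantifier order also matters: the theorem must supply, for every length, one relation that works through that length but still fails eventually. The main obstacle is therefore that the "no fixed $a$" clause must be read as failure for each individual $a_H$, which is exactly what it asserts. No new computation is needed beyond citing the theorem, although Example~\ref{ex:ps-hyperbolic-three} illustrates the case $H=3$.
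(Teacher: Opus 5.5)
Your proposal is correct and is essentially the paper's own proof: you take the relation $\widetilde P_{a_H}$ supplied by Theorem~\ref{thm:persistent-saturation-regimes}\textup{(c)} with $L=H$, which succeeds through the first $H$ jumps. You then invoke the theorem's final clause that no fixed $a$ saturates at infinitely many jumps, so this relation fails later. Your extra handling of $H\le0$ and your reading of ``success'' as the source condition plus full $3$-saturation are harmless clarifications.
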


\begin{proof}
Given $H$, choose the auxiliary polynomial supplied by Theorem~\ref{thm:persistent-saturation-regimes}\textup{(c)} with $L=H$.  It succeeds through those $H$ jumps, whereas no fixed member of the family succeeds infinitely often.
\end{proof}

The corollary is a pointwise analogue of Remark~\ref{rem:hyperbolic-no-uniform-horizon}.  The mechanisms are different: Proposition~\ref{prop:hyperbolic-delayed} delays failure of coefficient integrality by moving along a valuation recurrence, while Theorem~\ref{thm:persistent-saturation-regimes}\textup{(c)} keeps the base hyperbolic point orbit fixed and changes only the auxiliary relation.  In both cases, however, an arbitrarily long successful prefix does not certify global persistence.

\section{Conclusion and further directions}\label{sec:outlook}

For each fixed degree pair $(n,m)$, alternating reciprocal normalization gives a finite-dimensional rational return map on coefficient space.  Its four-step rational gauge closure is universal, but its integral orbits exhibit a sharp qualitative trichotomy after monic reduction.  The low-product cases are finite order, the degree product $nm=4$ produces parabolic dilations, and $nm>4$ produces hyperbolic valuation dynamics.  Every positive-constant canonical monic chain admits the explicit rational four-phase gauge of Proposition~\ref{prop:universal-four-phase-gauge}. At the parabolic degree boundary a geometric gauge further linearizes the point orbit: in all three patterns $(2,2)$, $(1,4)$, and $(4,1)$, every nondegenerate compatible rational orbit reduces to a two-periodic affine recurrence and hence to C-finite phase-coordinate sequences. On each fixed invariant conic, degree-$h$ identities are certified by $2h+1$ initial phase states; an integral unimodular state model also turns local congruence obstructions into exact periodic sets of indices.  Beyond the boundary, periodicizing the coefficient shapes no longer linearizes the points: for $nm>4$, every eventually positive normalized orbit that tends to infinity has hyperbolic logarithmic growth and is not C-finite, independently of the positive coefficient-periodicizing gauge.  Endpoint defects provide the exact local admissibility constraints, while the hyperbolic forward and bi-infinite integral loci are cut out by irrational eigencones together with finitely computable valley conditions.  Arbitrarily delayed obstructions show that these global criteria cannot be replaced by any uniform finite successful prefix.

The pointwise layer has two complementary kinds of rigidity.  Proposition~\ref{prop:relation-rank-collapse} shows that separated auxiliary-relation transport factors through side projections: one jump preserves exactly the visible support rank, but two alternating jumps reduce every transported rational relation space to dimension at most one.  Remark~\ref{rem:relation-saturation-rank} shows that primitive saturation can change integral index but cannot restore a lost rational direction.  In the quadratic specialization, Proposition~\ref{prop:ps-local-cancellation} identifies the exact uncancellable exponent with a projective tent depth.  On periodic Vieta orbits this gives a finite projective-ball criterion, and Proposition~\ref{prop:ps-six-cycle-classification} completely classifies the fixed auxiliary relations that fully saturate on the basic irreducible six-cycle.  Beyond the periodic setting, nonunit saturated relations can persist through arbitrarily long finite windows on unbounded parabolic and hyperbolic orbits.  Thus finite-time admissibility and global persistence separate at both the coefficient and pointwise levels.

Several extensions are natural from the dynamical viewpoint.  Over rings of integers in number fields, prime-ideal valuations should replace the scalar $p$-adic data and may lead to higher-dimensional invariant cones.  Even over $\mathbb Z$, it would be useful to describe invariant subvarieties and periodic points of the rational block map beyond the quadratic locus and to understand the valuation recurrence systematically as a non-Archimedean or tropical factor of the coefficient dynamics.  Proposition~\ref{prop:relation-rank-collapse} also identifies a boundary for higher-rank pointwise dynamics: ordinary separated transport with one active quotient cannot sustain more than one rational relation direction through two alternating jumps.  Any genuinely higher-rank extension must therefore inject new rational directions, for example through mixed monomials, simultaneous active quotient structures, or additional companion data not factored through a single side projection.

\section*{Acknowledgements}

The authors used ChatGPT to assist with exploratory calculations, checking derivations, and improving the exposition. The authors take full responsibility for the mathematical arguments and all other content of the paper.

\printbibliography

\clearpage
\appendix
\refstepcounter{section}
\section*{Appendix \thesection: Proofs for persistent saturation}\label{app:persistence-proofs}

This appendix proves the parabolic and hyperbolic parts of Theorem~\ref{thm:persistent-saturation-regimes}.  The periodic part follows directly from Proposition~\ref{prop:ps-six-cycle-classification} with $u=w=2$.  The constructions below use only the quadratic setup of Section~\ref{sec:persistent-saturation}; in particular, the base point dynamics is always the Vieta recurrence \eqref{eq:ps-vieta-jump}.

\subsection{Arbitrarily long parabolic persistence}\label{appsec:parabolic-persistence}

\begin{proof}[Proof of Theorem~\ref{thm:persistent-saturation-regimes}\textup{(b)}]
Fix $L\ge1$.  Choose a positive integer $S$ such that
\begin{equation}\label{eq:app-S-mod4}
  S\equiv2\pmod4
\end{equation}
and, for every odd prime $q$ dividing $m^2-n^2$ for some $1\le m<n\le L$, the residue $-S$ is a quadratic nonresidue modulo $q$.  The Chinese remainder theorem supplies such an $S$.  Put
\[
  D_n=n^2+S.
\]
If an odd prime divided both $D_m$ and $D_n$, then it would divide $m^2-n^2$ and make $-S$ a quadratic residue, contrary to the choice of $S$.  Condition \eqref{eq:app-S-mod4} also bounds the common $2$-part, so
\begin{equation}\label{eq:app-D-gcd}
  \gcd(D_m,D_n)\le2
  \qquad(1\le m<n\le L).
\end{equation}
Consequently the congruences
\begin{equation}\label{eq:app-W-crt}
  W\equiv4n-2\pmod{D_n},
  \qquad 1\le n\le L,
\end{equation}
are compatible: the difference of two right-hand sides is divisible by $4$, whereas every pairwise gcd in \eqref{eq:app-D-gcd} is at most $2$.  Fix one solution $W$.

Set
\[
  c=4(1+4S),
  \qquad
  P(T)=T^2-4T+c,
  \qquad
  u_n=2D_n.
\]
A direct calculation gives
\[
  u_{n+1}=2u_n-u_{n-1}+4
\]
and
\[
  (u_n-u_{n-1})^2-4(u_n+u_{n-1})+c=0.
\]
Thus $(u_n,u_{n-1})$ lies on the fixed conic
\[
  P(x)+P(y)-c=2xy.
\]
Its projective determinant is $-16$, so it is nonsingular and irreducible, and $u_n\sim2n^2$ makes the orbit unbounded and nonperiodic.

Use the fixed auxiliary polynomial
\[
  \widetilde P_W(T)=P(T)+2T(T+W)=3T^2+(2W-4)T+c.
\]
For $1\le n\le L$, write
\[
  W=4n-2+q_nD_n.
\]
At the source point $(x,y)=(u_n,u_{n-1})$,
\[
  2(y+W)=2D_n(q_n+2),
\]
so $x=2D_n$ satisfies the auxiliary source condition.  The normalized reciprocal is
\[
  \widetilde C_W(T)=T^2+(2W-4)T+3c,
\]
and hence
\[
  \widetilde C_W(T)-P(T)=2F_W(T),
  \qquad
  F_W(T)=WT+c.
\]
The next quotient is $y'=u_{n+1}=2D_{n+1}$, and substitution of \eqref{eq:app-W-crt} gives
\[
  F_W(u_{n+1})
  =2D_n\bigl(2(2n+3)+q_nD_{n+1}\bigr).
\]
Therefore $x\mid F_W(y')$ on all $L$ selected jumps.  Since $2\mid x$, removing the displayed factor $2$ is genuinely nonunit.
\end{proof}

\begin{remark}\label{rem:parabolic-fixed-W-finite}
For fixed $S$ and $W$, the source condition above is equivalent to
\[
  D_n\mid W-(4n-2).
\]
For sufficiently large $n$ the nonzero integer on the right has absolute value smaller than $D_n=n^2+S$.  Hence the construction gives arbitrarily long finite persistence, not an infinite ladder for one fixed $W$.
\end{remark}

\subsection{Arbitrarily long persistence on one fixed hyperbolic orbit}\label{appsec:hyperbolic-persistence}

Let $(L_n)_{n\ge0}$ denote the Lucas sequence
\[
  L_0=2,
  \qquad L_1=1,
  \qquad L_{n+2}=L_{n+1}+L_n,
\]
and put
\[
  w_j=L_{2j+1}=1,4,11,29,76,199,521,\ldots.
\]
Then
\begin{equation}\label{eq:app-w-recurrence}
  w_{j+1}=3w_j-w_{j-1},
  \qquad
  w_{j-1}w_{j+1}=w_j^2-5.
\end{equation}

\begin{lemma}[compatible Lucas congruences]\label{lem:app-lucas-crt}
For every $L\ge1$, the system
\begin{equation}\label{eq:app-lucas-crt}
  5a\equiv w_{j+1}\pmod{w_j},
  \qquad 1\le j\le L,
\end{equation}
has an integer solution $a$.
\end{lemma}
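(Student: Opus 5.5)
The plan is to reduce \eqref{eq:app-lucas-crt} to an ordinary system of linear congruences in $a$ and then verify the pairwise compatibility conditions of the generalized Chinese remainder theorem by computing in the ring $\Z[\varphi]$, where $\varphi=(1+\sqrt5)/2$ and $\psi=(1-\sqrt5)/2$.

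\emph{Making $5$ invertible.} No Lucas number is divisible by $5$. Indeed, $L_n\bmod 5$ runs through the period $2,1,3,4,2,1,\ldots$, which never contains $0$. Hence $\gcd(5,w_j)=1$ for every $j$, and \eqref{eq:app-lucas-crt} is equivalent to
\[
a\equiv 5^{-1}w_{j+1}\pmod{w_j},\qquad 1\le j\le L.
\]
By the generalized Chinese remainder theorem, such a system is solvable if and only if, for all $1\le i<j\le L$ and $d\defeq\gcd(w_i,w_j)$, the right-hand sides agree modulo $d$. Since $5$ is also invertible modulo $d$, this compatibility condition is
\[
w_{i+1}\equiv w_{j+1}\pmod d,\qquad\text{that is,}\qquad L_{2i+3}\equiv L_{2j+3}\pmod d .
\]

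\emph{Compatibility via $\Z[\varphi]$.} I would work in $R\defeq\Z[\varphi]/d\Z[\varphi]$. Since $\Z[\varphi]$ is free over $\Z$ with basis $1,\varphi$, one has $d\Z[\varphi]\cap\Z=d\Z$, so a congruence between integers in $R$ is the same as a congruence modulo $d$. Because $\varphi\psi=-1$, both $\varphi$ and $\psi$ are units in $R$.

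Let $N$ be odd with $d\mid L_N=\varphi^N+\psi^N$. Then $\varphi^N\equiv-\psi^N$ in $R$. Multiplying by $\varphi^N$ gives
\[
\varphi^{2N}\equiv-(\varphi\psi)^N=-(-1)^N=1 .
\]
The same argument gives $\psi^{2N}\equiv1$. Apply this with $N=2i+1$ and with $N=2j+1$. The multiplicative orders of $\varphi$ and $\psi$ in $R$ then divide $2g$, where $g\defeq\gcd(2i+1,2j+1)$. Now $g$ is odd and divides $(2j+1)-(2i+1)=2(j-i)$, so $g\mid j-i$ and therefore $2g\mid 2(j-i)$. Consequently $\varphi^{2j+3}\equiv\varphi^{2i+3}$ and $\psi^{2j+3}\equiv\psi^{2i+3}$ in $R$. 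Adding these gives $L_{2j+3}\equiv L_{2i+3}\pmod d$, which is the required compatibility. The Chinese remainder theorem then supplies $a$.

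\emph{Expected main obstacle.} The only delicate point is the compatibility step, because the moduli $w_j$ are not pairwise coprime; for example, $w_1=4$ and $w_4=76$ share the factor $4$. The multiplicative-order argument above avoids any appeal to the exact formula for $\gcd(L_m,L_n)$. Everything else is routine, namely the mod-$5$ periodicity of Lucas numbers and the integrality of $d\Z[\varphi]\cap\Z$.
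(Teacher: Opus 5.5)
Your proof is correct, but it handles the compatibility step by a different route from the paper. The paper first proves the exact formula $\gcd(L_M,L_N)=L_{\gcd(M,N)}$ for odd $M,N$. It does this by an odd-index Euclidean descent based on the addition identity $L_{r+2N}=L_NL_{r+N}-(-1)^NL_r$, with a sign correction via $L_{-R}=-L_R$. It then combines $L_{r+2d}\equiv L_r\pmod{L_d}$ with $2d\mid M-N$.

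You never compute the gcd. Working in $\Z[\varphi]/d\Z[\varphi]$ for an arbitrary common divisor $d$ of $L_{2i+1}$ and $L_{2j+1}$, you show $\varphi^{2g}\equiv\psi^{2g}\equiv1$. So the Lucas sequence is $2g$-periodic modulo $d$, and that is exactly what the CRT compatibility condition needs. The step $d\Z[\varphi]\cap\Z=d\Z$ correctly transfers the conclusion back to $\Z$.

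Your argument is shorter and avoids the sign bookkeeping of the Euclidean descent. The paper's argument stays within integer identities and yields the gcd formula as a byproduct. Your setup would also recover that formula at once: $\varphi^{2g}\equiv1$ and $\varphi^g\psi^g=-1$ give $\psi^g\equiv-\varphi^g$, hence $d\mid L_g$.

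The invertibility of $5$ is also argued differently: you use the period $2,1,3,4$ of $L_n$ modulo $5$, while the paper uses $L_n^2-5F_n^2=4(-1)^n$. Both are immediate.
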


\begin{proof}
Let $(F_n)$ denote the Fibonacci sequence, defined by
$F_0:=0$, $F_1:=1$, and $F_{n+2}:=F_{n+1}+F_n$.
The identity
\[
  L_n^2-5F_n^2=4(-1)^n
\]
shows that $5\nmid L_n$ for odd $n$, so $5$ is invertible modulo every $w_j$.  For completeness, if $M,N$ are positive odd integers and $d=\gcd(M,N)$,
then $\gcd(L_M,L_N)=L_d$. Indeed, the Lucas addition identity
$L_{r+2N}=L_NL_{r+N}-(-1)^NL_r$ gives
$L_{r+2N}\equiv L_r\pmod{L_N}$ for odd $N$.
Reduce $M$ modulo $2N$ and, if necessary, replace the residue by its
negative to obtain an odd $R$ with $0<R\le N$.
Since $L_{-R}=-L_R$ for odd $R$, this preserves both
$\gcd(M,N)$ and $\gcd(L_M,L_N)$. The resulting odd-index Euclidean
algorithm ends at $d$, proving the formula. For odd $d$, the same addition identity gives
\[
  L_{r+2d}\equiv L_r\pmod{L_d}.
\]
Take $M=2m+1$ and $N=2n+1$.  Since $M/d$ and $N/d$ are odd, $M-N$ is divisible by $2d$, and therefore
\[
  w_{m+1}\equiv w_{n+1}\pmod{\gcd(w_m,w_n)}.
\]
After multiplication by the inverse of $5$, these are precisely the pairwise compatibility conditions for the generalized Chinese remainder theorem applied to \eqref{eq:app-lucas-crt}.
\end{proof}

\begin{proof}[Proof of Theorem~\ref{thm:persistent-saturation-regimes}\textup{(c)}]
The second identity in \eqref{eq:app-w-recurrence} gives
\[
  \frac{P(3w_j)}{3w_{j-1}}
  =3\frac{w_j^2-5}{w_{j-1}}
  =3w_{j+1},
\]
so the displayed points form an integer Vieta orbit on
\[
  x^2+y^2-45=3xy.
\]
The projective determinant of this conic is $225/4\ne0$, while the recurrence \eqref{eq:app-w-recurrence} has dominant root $(3+\sqrt5)/2>1$; hence the conic is nonsingular and irreducible and the orbit is unbounded and nonperiodic.

Choose $a$ from Lemma~\ref{lem:app-lucas-crt}.  Multiplying \eqref{eq:app-lucas-crt} by $w_{j-1}$ and using \eqref{eq:app-w-recurrence} gives
\[
  5a w_{j-1}\equiv w_j^2-5\equiv-5\pmod{w_j}.
\]
Since $5$ is invertible modulo $w_j$,
\begin{equation}\label{eq:app-hyperbolic-source}
  w_j\mid a w_{j-1}+1.
\end{equation}
For
\[
  \widetilde P_a(T)=P(T)+T(aT+3),
\]
the source condition at $(x_j,y_j)$ becomes
\[
  x_j=3w_j\mid ay_j+3=3(a w_{j-1}+1),
\]
which follows from \eqref{eq:app-hyperbolic-source}.  Quadratic common-constant reciprocal normalization gives
\[
  \widetilde C_a(T)=T^2+3T-45(1+a),
\]
so
\[
  \widetilde C_a(T)-P(T)=3(T-15a).
\]
Finally \eqref{eq:app-lucas-crt} yields
\[
  y'_j-15a
  =3(w_{j+1}-5a)
  \equiv0\pmod{3w_j}=0\pmod{x_j},
\]
which is genuine nonunit $3$-saturation because $3\mid x_j$.

It remains to rule out infinite persistence for a fixed $a$.  If it held for infinitely many $j$, then
\[
  k_j=\frac{w_{j+1}-5a}{w_j}\in\mathbb Z
\]
for infinitely many $j$.  Along that subsequence,
\[
  k_j\longrightarrow\frac{3+\sqrt5}{2},
\]
because $w_j\to\infty$.  A convergent sequence of integers is eventually constant, impossible because the limit is irrational.
\end{proof}

\begin{remark}[primitive auxiliary choice]\label{rem:hyperbolic-primitive-choice}
The solution $a$ of \eqref{eq:app-lucas-crt} may be chosen so that $\widetilde P_a$ is primitive.  Modulo $3$, recurrence \eqref{eq:app-w-recurrence} gives $w_{j+1}\equiv-w_{j-1}$, and $w_0\equiv w_1\equiv1$ yields the repeating pattern $1,1,2,2$. Thus no $w_j$ is divisible by $3$, and the common modulus of the CRT solution class is coprime to $3$.  Shifting $a$ within that class allows $a\not\equiv-1\pmod3$, and then $\gcd(1+a,3,45)=1$.
\end{remark}
\end{document}